\newif\ifdviwin
\newtheorem{theorem}{Theorem}
\newtheorem{lem}[theorem]{Lemma}
\newtheorem{defi}[theorem]{Definition}
\newtheorem{example}[theorem]{Example}
\newtheorem*{theorem*}{Main Theorem}
\numberwithin{theorem}{subsection} 
\numberwithin{equation}{subsection} 
\DeclareMathAlphabet{\mathcalligra}{T1}{calligra}{m}{n}
\DeclareFontShape{T1}{calligra}{m}{n}{<->s*[2.2]callig15}{}
\newcommand{\ms}[1]{\mathscr{#1}}
\newcommand{\mbf}[1]{\mathbf{#1}}
\newcommand{\mbb}[1]{\mathbb{#1}}
\newcommand{\mfr}[1]{\mathfrak{#1}}
\newcommand{\tn}[1]{\textnormal{#1}}
\newcommand{\fsz}[1]{\footnotesize{#1}}
\def\ee{\varepsilon}
\def\QQ{\mathbb{Q}}
\def\ZZ{\mathbb{Z}}
\tikzset{isometricXYZ/.style={x={(-0.707cm,-0.354cm)}, y={(0.707cm,-0.354cm)}, z={(0cm,1cm)}}}
\newcommand{\comment}[1]{}
\begin{document}

\title{A Goodwillie-type Theorem for Milnor $K$-Theory}
\author{Benjamin F. Dribus \\ \small{Louisiana State University} \\ \small{{\color{blue} bdribus@math.lsu.edu}}}
\maketitle

\begin{abstract} \noindent Goodwillie's rational isomorphism between relative algebraic $K$-theory and relative cyclic homology of a ring with respect to a nilpotent ideal, together with the $\lambda$-decomposition of cyclic homology, illustrates the close relationships among algebraic $K$-theory, cyclic homology, and differential forms.  In this paper, I prove a Goodwillie-type theorem for relative Milnor $K$-theory, working over a very general class of commutative rings, defined via the stability criterion of Van der Kallen.  The theorem expresses relative Milnor $K$-theory exactly, rather than merely rationally, in terms of absolute K\"{a}hler differentials.  Early results of Van der Kallen and Bloch, involving $K_2$, are special cases.   The version of Milnor $K$-theory used is the na\"{i}ve one, defined in terms of tensor algebras, which generalizes Milnor's original definition for fields.  The result likely generalizes in terms of de Rahm-Witt complexes by weakening some invertibility assumptions, but the class of rings considered is already more than sufficiently general for the intended applications.  The main motivation for this paper arises from applications to the infinitesimal theory of Chow groups, first pointed out by Bloch in the 1970's, and prominent in recent work of Green and Griffiths.  In this context, much can be accomplished geometrically without much $K$-theoretic sophistication, although the proper structural viewpoint really involves ``deep, modern" methods and results such as Thomason's localization theorem.  Milnor $K$-theory, by contrast, is a simple type of ``symbolic $K$-theory," meaning that it may be defined concretely in terms of group presentations, rather than requiring the more sophisticated homotopy-theoretic constructions of Quillen, Waldhausen, Bass, and Thomason.  From this viewpoint, the theorem in this paper is ``not very $K$-theoretic" in a modern sense, but is merely the answer to a particular group isomorphism problem.  Early $K$-theory is replete with such problems, often involving messy, ad hoc proofs.   The proof here is of a similar character, involving elementary but intricate symbolic manipulations.  The advantage of this approach is that it is amenable to straightforward calculations, which modern $K$-theory often is not.  The proof is by induction, beginning with Bloch's result for $K_2$.  Related results and geometric applications are discussed in the final section. 
\end{abstract}

\section{Introduction}\label{sectionintroduction}

\subsection{Statement of the Theorem}\label{subsectionstatement}

Goodwillie's isomorphism \cite{GoodWillieRelativeK86}: 
\begin{equation}\label{equgoodwillie}
K_{n+1}(R,I)\otimes\mathbb{Q}\cong HC_{n}(R,I)\otimes\mathbb{Q},
\end{equation}
relating the relative algebraic $K$-theory and relative cyclic homology of a ring $R$ with respect to a $2$-sided nilpotent ideal $I$, together with the $\lambda$-decomposition of cyclic homology, which takes the form \cite{LodayCyclicHomology98}: 
\begin{equation}\label{equlambda}
HC_n(R;k)\cong \frac{\Omega_{R/k} ^n}{d\Omega_{R/k} ^{n-1}}\oplus H_{dR} ^{n-2}(R;k)\oplus H_{dR} ^{n-4}(R;k)\oplus...,
\end{equation}
 for a smooth algebra $R$ over a commutative ring $k$ containing $\QQ$, 
highlight the relationships among algebraic $K$-theory, cyclic homology, and differential forms.\footnotemark\footnotetext{I have chosen notation and context similar to that of Loday \cite{LodayCyclicHomology98} and Weibel \cite{WeibelKBook}.  Goodwillie \cite{GoodWillieRelativeK86} works in the more general context of {\it simplicial rings,} and uses $K_n(f)$ and $HC_n(f)$ to denote the relative groups $K_{n-1}(R,I)$ and $HC_{n-1}(R,I)$, where $f$ is the canonical surjection $R\rightarrow R/I$.  Note the difference of index conventions: Goodwillie (page 359) defines $K_n(f)$ to be the $(n-1)$st homotopy group of the homotopy fiber of the morphism $\mbf{K}(R)\rightarrow \mbf{K}(R/I)$ of pointed simplicial sets or spectra, while Loday (also page 359) and Weibel (Chapter IV, page 8) define $K_{n}(R,I)$ to be the $n$th homotopy group of the analogous fiber.  This leads to different numberings in the long exact sequences of Goodwillie (remark 3, page 359) and Loday (11.2.19.2, page 359).}   Goodwillie's isomorphism is a {\it relative} example of a rational isomorphism between an algebraic $K$-theory and a cohomology theory (compare \cite{FriedlanderRational03}); i.e., an isomorphism after tensoring both objects with $\QQ$. The first summand $\Omega_{R/k} ^n/d\Omega_{R/k} ^{n-1}$ appearing in equation \hyperref[equlambda]{\ref{equlambda}} is the $n$th module of K\"{a}hler differentials of $R$ relative to $k$, modulo exact differentials.  It is roughly analogous to the $(n+1)$st Milnor $K$-theory group $K_{n+1}^{\tn{\fsz{M}}}(R)$, which maps canonically into the first summand of the corresponding $\lambda$-decomposition of the algebraic $K$-theory group $K_{n+1}(R)$.\footnotemark\footnotetext{Here, $K_{n+1}(R)$ may be taken to be Quillen $K$-theory.  The map $K_{n+1}^{\tn{\fsz{M}}}(R)\rightarrow K_{n+1}(R)$ sends the Steinberg symbol $\{r_0,r_1,...,r_n\}$ to the product $r_0\times r_1\times...\times r_{n}$, where each factor $r_j$ in the latter product is regarded as an element of $K_1(R)\cong R^*$, and where the multiplication $\times$ is in the ring $K(R)$.   See Weibel  \cite{WeibelKBook} Chapter IV, pages 7-8, for details.  This map is not injective in general, even for fields.  See, for example, Weibel \cite{WeibelKBook} Chaper IV, exercise 1.12.}  The remaining summands $H_{\tn{\fsz{dR}}} ^{n-2j}(R;k)$ are de Rham cohomology modules; i.e., the cohomology modules of the algebraic de Rham complex $(\Omega_{R/k}^\bullet,d)$.  

In this paper I prove the following Goodwillie-type theorem for relative Milnor $K$-theory in the context of commutative rings: 
\vspace*{.2cm}
\begin{theorem*}Suppose that $R$ is a split nilpotent extension of a $5$-fold stable ring $S$, with extension ideal $I$, 
whose index of nilpotency is $N$.  Suppose further that every positive integer less than or equal to $N$
 is invertible in $S$.  Then for every positive integer $n$,
\begin{equation}\label{equmaintheorem}K_{n+1} ^{\tn{\fsz{M}}}(R,I)\cong \frac{\Omega_{R,I} ^n}{d\Omega_{R,I} ^{n-1}}.\end{equation}
\end{theorem*}

Here, $R$ and $S$ are commutative rings with identity, and $K_{n+1} ^{\tn{\fsz{M}}}(R,I)$ is the $(n+1)$st Milnor $K$-group of $R$ relative to $I$.  This group is what Kerz \cite{KerzMilnorLocal} calls the ``na\"{i}ve Milnor $K$-group;" see section \hyperref[subsectionsymbolic]{\ref{subsectionsymbolic}} below for more details. The differentials are {\it absolute} K\"{a}hler differentials, in the sense that they are differentials with respect to $\ZZ$.  They are {\it relative} to $I$ in the same sense that the $K$-groups are relative to $I$.  Because $R$ is a split extension of $S$, the group $K_{n+1} ^{\tn{\fsz{M}}}(R,I)$ may be identified with the kernel $\tn{Ker}[K_{n+1} ^{\tn{\fsz{M}}}(R)\rightarrow K_{n+1} ^{\tn{\fsz{M}}}(S)]$, and the group $\Omega_{R,I} ^n$ may be identified with the kernel $\tn{Ker}[\Omega_{R/\mathbb{Z}} ^n\rightarrow \Omega_{S/\mathbb{Z}} ^n]$, where both maps are induced by the split surjection $R\rightarrow S$.  The class of $m$-fold stable rings, defined in section \hyperref[subsectionstability]{\ref{subsectionstability}} below, includes any local ring with at least $m+2$ elements in its residue field. 

The isomorphism \hyperref[equmaintheorem]{\ref{equmaintheorem}} is the map
\[\phi_{n+1}:K_{n+1} ^{\tn{\fsz{M}}}(R,I)\longrightarrow\frac{\Omega_{R,I} ^n}{d\Omega_{R,I} ^{n-1}}\]
\begin{equation}\label{equationphi}
\{r_0,r_1,...,r_n\}\mapsto\log(r_0)\frac{dr_1}{r_1}\wedge...\wedge\frac{dr_n}{r_n},
\end{equation}
where $r_0$ belongs to the subgroup $(1+I)^*$ of the multiplicative group $R^*$ of $R$, and $r_1,...,r_n$ belong to $R^*$.  The symbol $\{r_0,r_1,...,r_n\}$ is called a Steinberg symbol; such symbols generate the Milnor $K$-group $K_{n+1} ^{\tn{\fsz{M}}}(R,I)$, as explained in section \hyperref[subsectionsymbolic]{\ref{subsectionsymbolic}} below.   The logarithm is understood in the sense of power series.  The inverse isomorphism is the map
\[\psi_{n+1}:\frac{\Omega_{R,I} ^n}{d\Omega_{R,I} ^{n-1}}\longrightarrow K_{n+1} ^{\tn{\fsz{M}}}(R,I)\]
\begin{equation}\label{equationpsi}
r_0dr_1\wedge...\wedge dr_m\wedge dr_{m+1}\wedge...\wedge dr_n\mapsto\{e^{r_0r_{m+1}...r_{n}},e^{r_1},...,e^{r_m},r_{m+1},...,r_{n}\},
\end{equation}
where the elements $r_0,r_1,...,r_m$ belong to the ideal $I$, and the elements $r_{m+1},...,r_{n}$ belong to the multiplicative group $R^*$ of $R$.   Part of the proof of the theorem involves verifying that the formulae \hyperref[equationphi]{\ref{equationphi}} and \hyperref[equationpsi]{\ref{equationpsi}}, defined in terms of special group elements, extend to homomorphisms. 


\subsection{Structure of the Paper}\label{subsectionstructure}

{\bf Section \hyperref[sectionprelim]{\ref{sectionprelim}}} provides context and background involving split nilpotent extensions, Van der Kallen stability, symbolic $K$-theory, and K\"{a}hler differentials.  

\begin{itemize}
\item Section \hyperref[subsectioncontextual]{\ref{subsectioncontextual}} places the theorem in its proper historical and mathematical context. 
\item Section \hyperref[subsectionnilpotent]{\ref{subsectionnilpotent}} introduces split nilpotent extensions of rings. 
\item Section \hyperref[subsectionstability]{\ref{subsectionstability}} discusses Van der Kallen's stability criterion for rings, and includes an easy lemma on the behavior of stability under split nilpotent extensions. 
\item Section \hyperref[subsectionsymbolic]{\ref{subsectionsymbolic}} introduces Milnor $K$-theory and Dennis-Stein $K$-theory, two different symbolic $K$-theories.   The definition of Milnor $K$-theory used is the na\"{i}ve one in terms of tensor algebras.   Dennis-Stein $K$-theory is defined here only for $K_2$.   Brief historical context is provided, and different definitions appearing in the literature are mentioned.  In particular, Kerz's ``improved Milnor $K$-theory," and Thomason's nonexistence proof for ``ideal global Milnor $K$-theory," are cited. 
\item Section \hyperref[subsectionsymbolicstability]{\ref{subsectionsymbolicstability}} presents two useful existing results relating symbolic $K$-theories under stability hypotheses.  The first, theorem \hyperref[theoremvanderkallen]{\ref{theoremvanderkallen}}, is Van der Kallen's isomorphism between the second Milnor $K$-theory group and the second Dennis-Stein $K$-theory group in the $5$-fold stable case.  The second, theorem \hyperref[theoremmaazen]{\ref{theoremmaazen}}, is Maazen and Stienstra's isomorphism between relative $K_2$ and the corresponding second relative Dennis-Stein group of a split radical extension. 
\item Section \hyperref[subsectionkahlerbloch]{\ref{subsectionkahlerbloch}} introduces absolute K\"{a}hler differentials, and cites a famous result of Bloch, theorem \hyperref[theorembloch]{\ref{theorembloch}}, which expresses relative $K_2$ of a split nilpotent extension in terms of absolute K\"{a}hler differentials.   This theorem provides the base case of the main theorem in section \hyperref[subsectionbasecase]{\ref{subsectionbasecase}}.
\end{itemize} 

{\bf Section \hyperref[sectionSteinbergKahler]{\ref{sectionSteinbergKahler}}} supplies computational tools for working with Milnor $K$-theory and K\"{a}hler differentials.  

\begin{itemize}
\item Section \hyperref[subsectionnotation]{\ref{subsectionnotation}} fixes notation and conventions designed to streamline the proof in section \hyperref[sectionproof]{\ref{sectionproof}}.  This devices are specific to this paper, although they could be used to advantage in any similar context. 
\item Section \hyperref[generatorsMilnor]{\ref{generatorsMilnor}} discusses Milnor $K$-theory and relative Milnor $K$-theory in terms of generators and relations.  In particular, lemmas \hyperref[lemrelationsstable]{\ref{lemrelationsstable}} and \hyperref[lemrelativegenerators]{\ref{lemrelativegenerators}} establish the computational convenience of split nilpotent extensions of $5$-fold stable rings in this context. 
\item Section \hyperref[subsectiongeneratorsKahler]{\ref{subsectiongeneratorsKahler}} provides similar results for K\"{a}hler differentials.    
\item Section \hyperref[subsectiondlogdeRhamWitt]{\ref{subsectiondlogdeRhamWitt}} introduces the canonical $d\log$ map, which is a homomorphism of graded rings between Milnor $K$-theory and the absolute K\"{a}hler differentials.  The $d\log$ map plays a specific role in the proof of lemma \hyperref[lempatchingphi]{\ref{lempatchingphi}}.  This section also mentioned the de Rham Witt viewpoint, as suggested by Van der Kallen and Hesselholt.  
\end{itemize} 

{\bf Section \hyperref[sectionproof]{\ref{sectionproof}}} presents the proof of the theorem. 

\begin{itemize}
\item Section \hyperref[subsectionstrategy]{\ref{subsectionstrategy}} outlines the strategy of proof: ``induction and patching."  This section also explains why na\"{i}ve ``simpler" approaches seem to run into trouble. 
\item Section \hyperref[subsectionbasecase]{\ref{subsectionbasecase}} presents the base case of the theorem: $K_{2} ^{\tn{\fsz{M}}}(R,I)\cong \Omega_{R,I} ^1/dI$, proved by combining theorems \hyperref[theoremvanderkallen]{\ref{theoremvanderkallen}}, \hyperref[theoremmaazen]{\ref{theoremmaazen}}, and \hyperref[theorembloch]{\ref{theorembloch}}.   The isomorphisms in both directions are described explicitly. 
\item Section \hyperref[subsectionexplicitinduction]{\ref{subsectionexplicitinduction}} states the induction hypothesis in detail. 
\item Section \hyperref[subsectionanalysisphi]{\ref{subsectionanalysisphi}} gives the construction of the map $\displaystyle\phi_{n+1}: K_{n+1} ^{\tn{\fsz{M}}}(R,I)\rightarrow \Omega_{R,I} ^n/d\Omega_{R,I} ^{n-1}$, and the proof that it is a surjective homomorphism.  The strategy is to ``patch together" maps $\Phi_{n+1,j}$ for $0\le j\le n-1$.  
\item Section \hyperref[subsectionanalysispsi]{\ref{subsectionanalysispsi}} completes the proof of the theorem by giving the construction of the map $\displaystyle\psi_{n+1}:\Omega_{R,I} ^n/d\Omega_{R,I} ^{n-1}\rightarrow K_{n+1} ^{\tn{\fsz{M}}}(R,I)$, and the proof that $\phi_{n+1}$ and $\psi_{n+1}$ are inverse isomorphisms. 
\end{itemize} 

{\bf Section \hyperref[sectiondiscussion]{\ref{sectiondiscussion}}} includes discussion and applications of the theorem.  

\begin{itemize}
\item Section \hyperref[subsectionGG]{\ref{subsectionGG}} discusses the initial motivation for the paper, which is the recent work by Green and Griffiths on the infinitesimal structure of cycle groups and Chow groups.  
\item Section \hyperref[subsectionsimilarresults]{\ref{subsectionsimilarresults}} examines existing results concerning relative $K$-theory which are similar to the theorem in this paper.  First, an early special case of the theorem, due to Van der Kallen, leads to an expression for the tangent group to the second Chow group $\tn{Ch}^2(X)$ of a smooth projective surface $X$ defined over a field containing the rational numbers.  This result, spelled out in equation \hyperref[linearblochstheorem]{\ref{linearblochstheorem}}, plays a prominent role in the work of Green and Griffiths.   Second, Stienstra carries this line of reasoning further to define the formal completion of $\tn{Ch}^2(X)$, essentially by sheafifying equation Bloch's theorem \hyperref[theorembloch]{\ref{theorembloch}} for relative $K_2$.  The resulting expression appears in equation \hyperref[equstienstra]{\ref{equstienstra}}.  Third, Hesselholt has proven an analogous result for the relative $K$-theory (not just Milnor $K$-theory) of a truncated polynomial algebra.   This result appears in equation \hyperref[equhesselholt]{\ref{equhesselholt}}. The first summand on the right-hand side gives a special case of the theorem in this paper, under appropriate assumptions on the underlying ring.    
\item Section \hyperref[subsectiontangentfunctors]{\ref{subsectiontangentfunctors}} discusses various ways of generalizing Green and Griffiths' tangent functors in a geometric context. 
\end{itemize}

\section{Preliminaries}\label{sectionprelim}

\subsection{Contextual Remarks}\label{subsectioncontextual}

From an abstract viewpoint, the mathematical problem addressed by this paper is a {\it group isomorphism problem,} in which one attempts to determine whether or not two groups, expressed in terms of generators and relations, are isomorphic.\footnotemark\footnotetext{Such problems were first studied systematically in the context of finite groups by Max Dehn more than a century ago.   In the present context, the groups are generally infinite, although some finite examples are included; for instance, those involving nilpotent extensions of finite fields.  The general group isomorphism is known to be undecidable, in the sense that no algorithm exists that will solve every case of the problem.} The early algebraic $K$-theory of the 1960's and 1970's provides many examples of such problems, often accompanied by forbidding symbolic computations.   The papers of Maazen and Stienstra \cite{MaazenStienstra77} and Van der Kallen \cite{VanderKallenRingswithManyUnits77} are representative.  The arguments in this paper follow this tradition; in particular, they will not stagger anyone with their beauty.  Perhaps the best justification for inflicting such material on the reader forty years after the papers mentioned above is that some people still wish, with justification, to carry out explicit elementary calculations in algebraic $K$-theory.   Here I have in mind particularly the recent work of Green and Griffiths \cite{GreenGriffithsTangentSpaces05} on the infinitesimal structure of cycle groups and Chow groups, in which the authors employ ``low-tech" wrangling with Steinberg symbols and K\"{a}hler differentials to achieve surprising geometric insights.   Such a viewpoint would be impossible without early results such as Matsumoto's theorem, which support, in special cases of particular interest, a na\"{i}ve symbolic treatment of $K$-theoretic structure possessing much greater intrinsic subtlety in the general case.  It is also true that relatively old and utilitarian methods can sometimes pick up crumbs left behind by the great machines of modern $K$-theory.  For example, the stability criterion of Van der Kallen, used in the theorem in this paper, provides a sharper result than the hypotheses that appear in many similar but more sophisticated theorems. 


\subsection{Split Nilpotent Extensions}\label{subsectionnilpotent}

A split nilpotent extension of a ring $S$ provides an algebraic notion of ``infinitesimal thickening of $S$."  Heuristically, one may think of augmenting $S$ by the addition of elements ``sufficiently small" that products of sufficiently many such elements vanish.\footnotemark\footnotetext{More precisely, one thinks of ``thickening" the affine scheme $\tn{Spec}(S)$ corresponding to $S$.   The motivation for mentioning this viewpoint comes from the geometric applications discussed in section \hyperref[sectiondiscussion]{\ref{sectiondiscussion}}.}\\
\begin{defi} Let $S$ be a commutative ring with identity.  A {\bf split nilpotent extension} of $S$ is a split surjection $R\rightarrow S$ whose kernel $I$, called the extension ideal, is nilpotent.  
\end{defi}
The {\it index of nilpotency} of $I$ is the smallest integer $N$ such that $I^N=0$.  A nilpotent extension ideal $I$ is contained in any maximal ideal $J$ of $R$, since $R/J$ is a field, and hence belongs to the Jacobson radical of $R$.  Hence, a split nilpotent extension is a special case of what Maazen and Stienstra \cite{MaazenStienstra77} call a {\it split radical extension.}\\

\begin{example}\tn{The simplest nontrivial split nilpotent extension of $S$ is the extension $R=S[\ee]/\ee^2\rightarrow S$.  The ring $S[\ee]/\ee^2$ is called the ring of dual numbers over $S$.  This is the extension involved in Van der Kallen's early computation \cite{VanderKallenEarlyTK271} of relative $K_2$, which plays a prominent role in the work of Green and Griffiths \cite{GreenGriffithsTangentSpaces05} on the infinitesimal structure of cycle groups and Chow groups.  More generally, if $k$ is a field and $S$ is a $k$-algebra, then tensoring $S$ with any local artinian $k$-algebra $A$ induces a split nilpotent extension $S\otimes_k A\rightarrow S$.  These are the extensions considered by Stienstra \cite{StienstraFormalCompletion83} in his study of the formal completion of the second Chow group of a smooth projective surface over a field containing the rational numbers.}
\end{example}



\subsection{Van der Kallen Stability}\label{subsectionstability}

Certain convenient properties of algebraic $K$-theory, including those enabling some of the steps of the proof in section \hyperref[sectionproof]{\ref{sectionproof}} below, rely on an assumption that the ring under consideration has ``enough units," or that its units are ``organized in a convenient way."   One way to make this idea precise is via Van der Kallen's \cite{VanderKallenRingswithManyUnits77} notion of stability.\footnotemark\footnotetext{This terminology seems to have first appeared in Van der Kallen, Maazen, and Stienstra's 1975 paper {\it A Presentation for Some $K_2(R,n)$} \cite{VanMaazenStienstra}.}  This notion is closely related to the stable range conditions of Hyman Bass, introduced in the early 1960's.\\
\begin{defi}Let $S$ be a  commutative ring with identity, and let $m$ be a positive integer.  
\begin{enumerate}
\item A pair $(s,s')$ of elements of $S$ is called {\bf unimodular} if $sS+s'S=S$. 
\item  $S$ is called $\mbf{m}$-{\bf fold stable} if, given any family $\{(s_j,s_j')\}_{j=1}^m$ of unimodular pairs in $S$, there exists an element $s\in S$ such that $s_j+s_j's$ is a unit in $S$ for each $j$. 
\end{enumerate}
\end{defi}
\vspace*{.3cm}
\begin{example}\label{examplesemilocalstable}\tn{A semilocal ring\footnotemark\footnotetext{A {\it commutative} ring $R$ is semilocal if and and only if it has a finite number of maximal ideals.  The general definition is that $R/J(R)$ is semisimple, where $J(R)$ is the Jacobson radical of $R$.} is $m$-fold stable if and only if all its residue fields contain at least $m+1$ elements.  See Van der Kallen, Maazen and Stienstra \cite{VanMaazenStienstra}, page 935, or Van der Kallen \cite{VanderKallenRingswithManyUnits77}, page 489.  In particular, for any $m$, the class of $m$-fold stable rings is much larger than the class of local rings of smooth algebraic varieties over a field containing the rational numbers, which are the rings of principal interest in the context of Green and Griffiths' work on the infinitesimal structure of cycle groups and Chow groups \cite{GreenGriffithsTangentSpaces05}.  Due to the relationship between stability and the size of residue fields, the theorem in this paper allows some of the computations of Green and Griffiths to be repeated in positive characteristic.}
\end{example}

The following easy lemma establishes two consequences of stability necessary for the proof of the theorem in section \hyperref[sectionproof]{\ref{sectionproof}} below.\\


\begin{lem}\label{lemstability} Suppose $R$ is a split nilpotent extension of a commutative ring $S$ with identity.  Let $I$ be the extension ideal. 
\begin{enumerate} 
\item If $S$ is $m$-fold stable, then $R$ is also $m$-fold stable.
\item If $S$ is $2$-fold stable and $2$ is invertible in $S$, then every element of $R$ is the sum of 
two units.
\end{enumerate}
\end{lem}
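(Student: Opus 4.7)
The plan is to reduce everything to the corresponding statement about $S$ by exploiting two basic features of a nilpotent extension ideal $I$: it is contained in the Jacobson radical of $R$, and $R = S \oplus I$ splits as an $S$-module via the given section $S \hookrightarrow R$.

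The main bookkeeping lemma I will establish first is that an element $r \in R$ is a unit if and only if its image $\bar r \in S$ is a unit, and a pair $(r,r') \in R \times R$ is unimodular if and only if its image $(\bar r,\bar r')$ is unimodular in $S$. The units statement is immediate from $I \subseteq J(R)$. For the unimodular statement, one direction is trivial under the surjection $R \to S$; for the other, if $\bar r S + \bar r' S = S$, then $rR + r'R$ meets $1+I \subseteq R^*$, so $rR + r'R = R$.

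Given this, part (1) is direct. Let $\{(r_j,r_j')\}_{j=1}^m$ be unimodular pairs in $R$. Their images $\{(\bar r_j,\bar r_j')\}$ are unimodular in $S$, so $m$-fold stability of $S$ produces $s \in S$ with $\bar r_j + \bar r_j' s \in S^*$ for every $j$. Lifting $s$ to $R$ via the splitting $S \hookrightarrow R$, the element $r_j + r_j' s \in R$ has image $\bar r_j + \bar r_j' s \in S^*$, so by the units criterion above, $r_j + r_j' s \in R^*$, as required.

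For part (2), invoke part (1) to conclude $R$ is itself $2$-fold stable. Since $2$ is a unit in $S$ and $R = S \oplus I$ with $I$ nilpotent, $2$ is also a unit in $R$, so $r/2$ is defined for every $r \in R$. Apply $2$-fold stability of $R$ to the two visibly unimodular pairs $(r/2, 1)$ and $(r/2, -1)$ to obtain $t \in R$ with both $r/2 + t$ and $r/2 - t$ units; then
\[
r = (r/2 + t) + (r/2 - t)
\]
expresses $r$ as a sum of two units.

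There is no real obstacle here: the only point requiring even slight care is the Nakayama-style verification that unimodularity descends from $S$ to $R$, and that is handled by the observation $1 + I \subseteq R^*$. The role of $2$-invertibility is confined to producing the symmetric splitting $r = r/2 + r/2$ that makes the single application of stability yield two units rather than a unit and its negative.
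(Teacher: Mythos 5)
Your proposal is correct and follows essentially the same route as the paper: part (1) reduces to stability of $S$ via the splitting and the fact that an element of $R$ congruent to a unit modulo the nilpotent ideal $I$ is a unit (the paper phrases this as ``unit plus nilpotent is a unit,'' you phrase it as units lifting along $R\to R/I$), and part (2) applies $2$-fold stability of $R$ to a pair of unimodular pairs built from $r$ and $\pm$ a unit, using invertibility of $2$ to symmetrize. The only cosmetic difference is that you divide by $2$ before invoking stability, via the pairs $(r/2,\pm 1)$, whereas the paper uses $(r,\pm 2)$ and divides afterward.
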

\begin{proof} For part 1 of the lemma, let $\{(r_j,r_j')\}_{j=1}^m$ be a family of unimodular pairs in $R$.  Since $R$ is a split extension of $S$, $r_j$ and $r_j'$ may be written uniquely as sums
\[r_j=s_j+i_j,\hspace*{.5cm}\tn{and}\hspace*{.5cm}r_j'=s_j'+i_j',\hspace*{.5cm}\tn{where}\hspace*{.5cm}s_j,s_j'\in S\hspace*{.5cm}\tn{and}\hspace*{.5cm}i_j,i_j'\in I.\]
It follow immediately that $\{(s_j,s_j')\}_{j=1}^m$ is a family of unimodular pairs in $S$.  Since $S$ is $m$-fold stable, there exists an element $s$ of $S$ such that $s_j+s_j's$ is a unit in $S$ for each $j$.   Then $r_j+r_j's$ may be expressed as a sum of a unit and a nilpotent element as follows:
\[r_j+r_j's=(s_j+s_j's)+(i_j+i_j's).\]
Therefore, $r_j+r_j's$ is a unit in $R$.   This is true for every $j$, so $R$ is $m$-fold stable. 

For part 2 of the lemma, first note that if $S$ is $2$-fold stable and $2$ is invertible in $S$, the same hypotheses hold for $R$ by part 1 of the lemma.  Let $r$ be any element of $R$. Any pair of elements including a unit is automatically unimodular, so the pairs $(r,2)$ and $(r,-2)$ are unimodular (these pairs need not be distinct).  Since $R$ is $2$-fold stable, there exists an element $r'$ in $R$, and units $u$ and $v$ in $R$, such that 
 \[r+2r'=u\hspace*{.5cm}\tn{and}\hspace*{.5cm} r-2r'=v.\]
 Adding these formulas gives $2r=u+v$.  Since $2$ is invertible in $R$, this implies that $r=u/2+v/2$, a sum of two units. 
\end{proof}

\subsection{Symbolic $K$-Theories: Milnor $K$-Theory and Dennis-Stein $K$-Theory}\label{subsectionsymbolic}

Milnor $K$-theory and Dennis-Stein $K$-theory are {\it symbolic $K$-theories;} an informal term which means, in this context, $K$-theories whose $K$-groups admit simple presentations via generators, called {\it symbols,} and relations.  More sophisticated ``modern" $K$-theories, such as Quillen's $K$-theory, Waldhausen's $K$-theory, and the amplifications of Bass and Thomason, have homotopy-theoretic definitions.  Symbolic $K$-theories have the advantage of being relatively elementary, but tend to lack certain desirable formal properties.  In this sense, they represent one extreme of the seemingly unavoidable tradeoff between accessibility and formal integrity in algebraic $K$-theory.

The following definition introduces the ``na\"{i}vest" version of Milnor $K$-theory:\\    

\begin{defi}\label{defiMilnorK} Let $R$ be a commutative ring with identity, and let $R^*$ be its multiplicative group of invertible elements, viewed as a $\ZZ$-module. 
\begin{enumerate}
\item The {\bf Milnor $K$-ring} $K_\bullet^{\tn{\fsz{M}}}(R)$\footnotemark\footnotetext{The ``dot notation" $K_\bullet^{\tn{\fsz{M}}}(R)$, rather than the simpler $K^{\tn{\fsz{M}}}(R)$, is used here for the purposes of comparing the Milnor $K$-ring to the graded ring $\Omega_{R/\ZZ}^\bullet$ of absolute K\"{a}hler differentials in lemma \hyperref[lemdlog]{\ref{lemdlog}} below, since $\Omega_{R/\ZZ}$ always means $\Omega_{R/\ZZ}^1$.} of $R$ is the quotient
\[K_\bullet^{\tn{\fsz{M}}}(R):=\frac{T_{R^*/\mathbb{Z}}}{I_{\tn{\fsz{St}}}}\]
of the tensor algebra $T_{R^*/\mathbb{Z}}$ by the ideal $I_{\tn{\fsz{St}}}$ generated by elements of the form $r\otimes(1-r)$.  
\item The $n$th {\bf Milnor $K$-group} $K_{n} ^{\tn{\fsz{M}}}(R)$ of $R$, defined for $n\ge0$, is the $n$th graded piece of $K_\bullet^{\tn{\fsz{M}}}(R)$. 
\end{enumerate}
\end{defi}
The tensor algebra $T_{R^*/\mathbb{Z}}$ of $R$ over $k$ is by definition the graded $k$-algebra whose zeroth graded piece is $k$, whose $n$th graded piece is the $n$-fold tensor product $R\otimes_k...\otimes_kR$ for $n\ge1$, and whose multiplicative operation is induced by the tensor product.  The subscript ``$\tn{St}$" assigned to the ideal $I_{\tn{\fsz{St}}}$ stands for ``Steinberg," since the defining relations $r\otimes(1-r)\sim0$ of $K^{\tn{\fsz{M}}}(R)$ are called {\it Steinberg relations}.  The ring $K^{\tn{\fsz{M}}}(R)$ is noncommutative, since concatenation of tensor products is noncommutative; more specifically, it is {\it anticommutative} if $R$ has ``enough units," in a sense made precise below.  The $n$th Milnor $K$-group $K_{n} ^{\tn{\fsz{M}}}(R)$ is generated, under {\it addition} in $K^{\tn{\fsz{M}}}(R)$, by equivalence classes of $n$-fold tensors $r_1\otimes...\otimes r_n$.   Such equivalence classes are denoted by symbols $\{r_1,...,r_n\}$, called {\it Steinberg symbols}.   When working with individual Milnor $K$-groups, the operation is usually viewed {\it multiplicatively,} and the identity element is usually denoted by $1$.   For example, expressions such as $\prod_l\{r_l,e^{u_jr_li_k\Pi_l},\bar{r}_l,u_j\}$, appearing in sections \hyperref[subsectionanalysisphi]{\ref{subsectionanalysisphi}} and  \hyperref[subsectionanalysispsi]{\ref{subsectionanalysispsi}} below, are viewed as products in $K_{n} ^{\tn{\fsz{M}}}(R)$, although they represent {\it sums} in $K^{\tn{\fsz{M}}}(R)$. 

Milnor $K$-theory first appeared in John Milnor's 1970 paper {\it Algebraic $K$-Theory and Quadratic Forms} \cite{MilnorAlgebraicKTheoryQforms70}, in the context of fields.  Around the same time, Milnor, Steinberg, Matsumoto, Dennis, Stein, and others were studying the second $K$-group $K_2(R)$ of a general ring $R$, defined by Milnor in 1967 as the center of the Steinberg group of $R$.  $K_2(R)$ is often called ``Milnor's $K_2$" in honor of its discoverer, but is in fact the ``full $K_2$-group."  In particular, it is much more complicated in general than the second Milnor $K$-group $K_2^{\tn{\footnotesize{M}}}(R)$ according to definition \hyperref[defiMilnorK]{\ref{defiMilnorK}}.  Adding further to the confusion of terminology, the two groups $K_2(R)$ and $K_2^{\tn{\footnotesize{M}}}(R)$ {\it are} equal in many important special cases; in particular, when $R$ is a field, a division ring, local ring, or even a semilocal ring.\footnotemark\footnotetext{See \cite{WeibelKBook}, Chapter III, Theorem 5.10.5, page 43, for details.}  This result is usually called Matsumoto's theorem, since its original version was proved by Matsumoto, for fields, in an arithmetic setting.  Matsumoto's theorem was subsequently extended to division rings by Milnor, and finally to semilocal rings by Dennis and Stein. 

There is no consensus in the literature about how the Milnor $K$-groups $K_n^{\tn{\fsz{M}}}(R)$ should be defined for general $n$ and $R$.   The definition I use here is the most na\"{i}ve one.  Its claim to relevance relies on foundational work by Steinberg, Milnor, Matsumoto, and others.  Historically, the Steinberg symbol arose as a map $R^*\times R^*\rightarrow K_2(R)$, defined in terms of special matrices.   The properties of this map, including the relations satisfied by its images, may be analyzed concretely in terms of matrix properties.\footnotemark\footnotetext{See Weibel \cite{WeibelKBook} Chapter III, or Rosenberg \cite{RosenbergK94} Chapter 4 for details.}  In the case where $R$ is a field, Matsumoto's theorem states that the image of the Steinberg symbol map generates $K_2(R)$, and that all the relations satisfied by elements of the image follow from the relations of the tensor product and the Steinberg relations.   This allows a simple re-definition of $K_2(R)$ in terms of a tensor algebras when $R$ is a field, with the generators {\it renamed} Steinberg symbols.  Abstracting this result to general $n$ and $R$ leads to definition \hyperref[defiMilnorK]{\ref{defiMilnorK}} above.   However, it has been understood from the beginning that the resulting ``Milnor $K$-theory" is seriously deficient in many respects.   Quillen, Waldhausen, Bass, Thomason, and others have since addressed many of these deficiencies by defining more elaborate versions of $K$-theory, but there still remain many reasons why symbolic $K$-theories are of interest.  For example, they are closely connected to motivic cohomology, provide interesting approaches to the study of Chow groups and higher Chow groups, and arise in physical settings in superstring theory and elsewhere.  The viewpoint of the present paper, involving $\lambda$-decompositions, cyclic homology, and differential forms, is partly motivated by these considerations, particularly the theory of Chow groups. 

It is instructive to briefly examine a few different treatments of Milnor $K$-theory in the literature.  Weibel \cite{WeibelKBook} chooses to confine his definition of Milnor $K$-theory to the original context of fields (Chapter III, section 7), while defining Steinberg symbols more generally (Chapter IV, example 1.10.1, page 8), and also discussing many other types of symbols, including Dennis-Stein symbols (Chapter III, defnition 5.11, page 43), and Loday symbols (Chapter IV, exercise 1.22, page 122).\footnotemark\footnotetext{Interestingly, the Loday symbols project nontrivially into a range of different pieces of the $\lambda$-decomposition of Quillen $K$-theory.  See Weibel \cite{WeibelKBook} Chapter IV, example 5.11.1, page 52, for details.} Elbaz-Vincent and M\"{u}ller-Stach \cite{ElbazVincentMilnor02} define Milnor $K$-theory for general rings (Definition 1.1, page 180) in terms of generators and relations, but take the additive inverse relation of lemma \hyperref[lemrelationsstable]{\ref{lemrelationsstable}} as part of the definition.   The result is a generally nontrivial quotient of the Milnor $K$-theory of definition \hyperref[defiMilnorK]{\ref{defiMilnorK}} above.\\

\begin{example}\tn{Let $R=\ZZ_2[x]/x^2$.   The multiplicative group $R^*$ is isomorphic to $\ZZ_2$, generated by the element $r:=1+x$.  The Steinberg ideal is empty since $1-r$ is not a unit.  Hence, $K_2^{\tn{\fsz{M}}}(R)$ is just $R^*\otimes_\ZZ R^*\cong\ZZ_2$, generated by the symbol $\{r,r\}=\{r,-r\}$, while Elbaz-Vincent and M\"{u}ller-Stach's corresponding group is trivial.   By contrast, the additive inverse relation $\{r,-r\}=1$ always holds if one uses the original definition of Steinberg symbols in terms of matrices; see Weibel \cite{WeibelKBook} Chapter III, remark 5.10.4, page 43.  This may be interpreted as an indication that this relation is a desirable property for ``enhanced" versions of  Milnor $K$-theory.}
\end{example}

Moritz Kerz \cite{KerzMilnorLocal} has suggested an ``improved version of Milnor $K$-theory," motivated by a desire to correct certain formal shortcomings of the ``na\"{i}ve" version defined in terms of the tensor product.  For example, this version fails to satisfy the Gersten conjecture.   Thomason \cite{ThomasonNoMilnor92} has shown that Milnor $K$-theory does not extend to a theory of smooth algebraic varieties with desirable properties such as $\mbb{A}^1$-homotopy invariance and functorial homomorphisms to more complete version of $K$-theory. Hence, the proper choice of definition depends on what properties and applications one wishes to study. 


Dennis-Stein $K$-theory plays only on small part in this paper.  It therefore suffices to define only the ``second Dennis-Stein $K$-group."\footnotemark\footnotetext{Dennis and Stein initially considered a symbolic version of $K_2$ in their 1973 paper {\it $K_2$ of radical ideals and semi-local rings revisited} \cite{DennisStein}.  Their approach generalizes to higher $K$-theory in a variety of different ways; see, for instance, sections 11.1 and 11.2 of Loday \cite{LodayCyclicHomology98}.  What I mean by ``Dennis-Stein $K$-theory" is essentially the part of $K$-theory generated by the Loday symbols, but the distinction is immaterial for $K_2$.}\\ 
\begin{defi}\label{defidennisstein} Let $R$ be a commutative ring with identity, and let $R^*$ be its multiplicative group of invertible elements.  The second {\bf Dennis-Stein $K$-group} $D_2(R)$ of $R$ is the multiplicative abelian group whose generators are symbols $\langle a,b\rangle$ for each pair of elements $a$ and $b$ in $R$ such that $1+ab\in R^*$, subject to the additional relations
\begin{enumerate}
\item $\langle a,b\rangle\langle -b,-a\rangle=1.$
\item $\langle a,b\rangle\langle a,c\rangle=\langle a,b+c+abc\rangle.$
\item $\langle a,bc\rangle=\langle ab,c\rangle\langle ac,b\rangle.$
\end{enumerate}
\end{defi}

This definition may be found in both Maazen and Stienstra \cite{MaazenStienstra77} definition 2.2, page 275, and Van der Kallen \cite{VanderKallenRingswithManyUnits77}, page 488.


\subsection{Symbolic $K$-Theory and Stability}\label{subsectionsymbolicstability}

For rings possessing a sufficient degree of stability in the sense of Van der Kallen, different versions of symbolic $K$-theory tend to produce isomorphic $K$-groups. An important example of such an isomorphism involves the second Dennis-Stein $K$-group and the second Milnor $K$-group.\\
\begin{theorem}\label{theoremvanderkallen} (Van der Kallen) Let $S$ be a commutative ring with identity, and suppose that $S$ is $5$-fold stable.  Then 
\begin{equation}K_{2} ^{\tn{\fsz{M}}}(S)\cong D_2(S).\end{equation}
\end{theorem}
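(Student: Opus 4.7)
The plan is to construct mutually inverse homomorphisms $\alpha \colon D_2(S) \to K_2^{\tn{\fsz{M}}}(S)$ and $\beta \colon K_2^{\tn{\fsz{M}}}(S) \to D_2(S)$, using $5$-fold stability to bridge the mismatch between the two symbol types: a Dennis-Stein symbol $\langle a, b\rangle$ requires only $1 + ab \in S^*$, while a Steinberg symbol $\{u, v\}$ requires both $u, v \in S^*$. On generators whose entries are units the candidate formulas are
\[
\alpha(\langle a, b\rangle) = \{1+ab,\, -b\} \quad (b \in S^*), \qquad \beta(\{u, v\}) = \langle (1-u)v^{-1},\, -v\rangle,
\]
which are mutually inverse by direct substitution. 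The formula for $\alpha$ is unambiguous on the intersection where both $a$ and $b$ are units: the alternative expression $\{1+ab, a\}^{-1}$, obtained by first applying relation (1) of Definition \ref{defidennisstein} and then the formula, agrees with $\{1+ab, -b\}$ because their quotient is $\{1+ab, -ab\}$, which equals $1$ by the Steinberg relation.

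The first block of checks verifies that $\alpha$ carries the Dennis-Stein relations to the identity in $K_2^{\tn{\fsz{M}}}(S)$ when the relevant entries are units. Relation (1), $\langle a, b\rangle \langle -b, -a\rangle = 1$, reduces exactly to $\{1+ab, -ab\} = 1$. Relation (2), $\langle a, b\rangle \langle a, c\rangle = \langle a, b+c+abc\rangle$, reduces via the factorization $(1+ab)(1+ac) = 1 + a(b+c+abc)$ to an identity among Steinberg symbols provable from bilinearity together with the Steinberg relation. Relation (3) is analogous. The skew-symmetry of the Steinberg symbol, used implicitly throughout, holds in $K_2^{\tn{\fsz{M}}}(S)$ because $S$ has enough units.

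The substantive step is extending $\alpha$ to symbols $\langle a, b\rangle$ where $b$ need not be a unit, and checking that $\beta$ respects the Steinberg relation on arbitrary $\{u, v\}$. For $\alpha$, I would use stability to write $b$ as a sum of units $b = b_1 + b_2$ and expand via relation (2) to express $\langle a, b\rangle$ as a product of symbols with unit second entries; then one shows that the resulting value of $\alpha$ is independent of the decomposition. For $\beta$, the corresponding check is that the image of the Steinberg relation $\{r, 1-r\} = 1$ becomes the identity in $D_2(S)$, which reduces to an algebraic identity among Dennis-Stein symbols with admissible arguments.

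The main obstacle will be this well-definedness check: interpolating between two admissible decompositions requires finding an element $s \in S$ such that several prescribed linear combinations $b_i + s b_i'$ lie simultaneously in $S^*$, which is precisely what $5$-fold stability guarantees (the bound $5$ arising from the number of distinct intermediate units that must be controlled in the chain connecting two decompositions). Once $\alpha$ and $\beta$ are shown to be well-defined homomorphisms, the inverse relations $\alpha\beta = \mathrm{id}$ and $\beta\alpha = \mathrm{id}$ follow directly from the formulas on generators.
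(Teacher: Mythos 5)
The paper does not actually prove this statement: its ``proof'' is the citation to Van der Kallen's Theorem 8.4, so the only thing to compare your outline against is the known form of the isomorphism, which the paper records explicitly in Section 4.2 as $\langle a,b\rangle\mapsto\{1+ab,b\}$ with inverse $\{u,v\}\mapsto\langle (u-1)v^{-1},v\rangle$. Your formulas carry extra minus signs, and these are not an innocent change of convention --- they destroy well-definedness. For $\alpha$: apply Dennis--Stein relation (3) with $b,c\in S^*$. Your formula sends the left side $\langle a,bc\rangle$ to $\{1+abc,-bc\}$ and the right side $\langle ab,c\rangle\langle ac,b\rangle$ to $\{1+abc,-c\}\{1+abc,-b\}=\{1+abc,bc\}$, so well-definedness would force $\{1+abc,-1\}=1$ for all admissible $a,b,c$. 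This fails: take $S=\RR$, $a=-2$, $b=c=1$, so that relation (3) forces $\langle -2,1\rangle=1$ in $D_2(\RR)$, while $\alpha(\langle -2,1\rangle)=\{-1,-1\}$, which is the nontrivial $2$-torsion class in $K_2^{\tn{\fsz{M}}}(\RR)$. (Relations (1) and (2) happen to be insensitive to the sign, which is why your spot-checks passed; relation (3) is not ``analogous'' to (2) precisely because one side has one symbol and the other has two, so the spurious factors $\{\cdot,-1\}$ do not cancel.) The same sign defeats $\beta$: the Steinberg relation gives $\{-1,2\}=1$ in $K_2^{\tn{\fsz{M}}}(\RR)$, but $\beta(\{-1,2\})=\langle 1,-2\rangle$, which corresponds to $\{-1,-2\}=\{-1,-1\}\neq 1$ under the true isomorphism, so $\beta$ does not kill the Steinberg relation either.

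Beyond the sign error, the outline defers essentially all of the substantive content to unexecuted claims: the extension of $\alpha$ to symbols $\langle a,b\rangle$ with $b$ not a unit and the independence of the chosen decomposition of $b$ into units; the verification of relation (2) as an identity of Steinberg symbols (which is a genuine multi-step computation, not a formal consequence of bilinearity); and the triviality in $D_2(S)$ of the image of the Steinberg relation. These steps are the actual content of Van der Kallen's argument, which moreover does not compare the two presentations directly but routes both through the genuine $K_2(S)$ defined via the Steinberg group, using stability repeatedly to show that $K_2(S)$ is generated by Steinberg symbols subject only to the expected relations. As written, the proposal is not a proof and its explicit formulas are incorrect.
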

\begin{proof}Van der Kallen \cite{VanderKallenRingswithManyUnits77}, theorem 8.4, page 509.  Note that Van der Kallen denotes the Dennis-Stein group $D_2$ by $\tn{D}$, and the Milnor $K$-group $K_{2} ^{\tn{\fsz{M}}}$ by $\tn{US}$.  Definitions of the groups $\tn{D}(S)=D_2(S)$ and $\tn{US}(S)=K_{2} ^{\tn{\fsz{M}}}(S)$ in terms of generators and relations appears on pages 488 and 509 of the same paper, respectively.
\end{proof}
Whether or not theorem \hyperref[theoremvanderkallen]{\ref{theoremvanderkallen}} remains true if one weakens the stability hypothesis to $4$-fold stability apparently remains unknown.\footnotemark\footnotetext{Van der Kallen  \cite{VanderKallenRingswithManyUnits77} writes {\it ``We do not know if $4$-fold stability suffices for theorem 8.4."} More recently, Van der Kallen tells me \cite{VanderKallenprivate14} that the answer to this question is still apparently unknown.}

The following result involving {\it relative} $K$-groups, does not require any stability hypothesis.  Note that the group $K_{2}(R,I)$ is {\it a priori} the ``total relative $K$-group," not the just the part generated by Steinberg symbols.\\ 

\begin{theorem}\label{theoremmaazen} (Maazen and Stienstra) If $R$ is a split radical extension of $S$ with extension ideal $I$, then 
\begin{equation}K_{2}(R,I)\cong D_{2}(R,I)\end{equation}
\end{theorem}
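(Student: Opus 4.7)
The plan is to construct mutually inverse homomorphisms $\phi \colon D_2(R,I) \to K_2(R,I)$ and $\psi \colon K_2(R,I) \to D_2(R,I)$. The hypothesis that $R \to S$ is a split radical extension is used in two ways: it identifies both $K_2(R,I)$ and $D_2(R,I)$ with the kernels of the maps to the corresponding groups of $S$ (so all our symbols may be taken to have at least one argument in $I$), and it guarantees that $1 + ab \in R^*$ whenever either $a$ or $b$ lies in $I$, so that Dennis-Stein symbols $\langle a, b \rangle$ with an argument in $I$ are automatically well defined.

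For $\phi$, I would realize each relative Dennis-Stein symbol as an explicit product of Steinberg generators in $\mathrm{St}(R)$, using the classical recipe of Dennis and Stein from \cite{DennisStein}, roughly
\[
\langle a, b \rangle \longmapsto x_{12}(a)\, x_{21}\!\bigl((1+ab)^{-1}b\bigr)\, x_{12}\!\bigl(-(1+ab)\, a\bigr)\, x_{21}(-b),
\]
and check that the resulting element lies in $K_2(R)$ and, when $a \in I$, in the kernel $K_2(R,I)$. Well-definedness then reduces to verifying the three Dennis-Stein relations of Definition~\ref{defidennisstein} as identities among these Steinberg words; this is a long but routine commutator computation, essentially a specialization of Dennis and Stein's original construction to symbols with one argument in $I$.

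For the inverse $\psi$, the strategy is to invoke a presentation of the relative group $K_2(R,I)$ by generators and relations of the type furnished by Van der Kallen, Maazen, and Stienstra in \cite{VanMaazenStienstra}. Their generators are relative Steinberg-type symbols $y_{ij}(a;r)$ with $a \in I$ and $r \in R$, together with conjugation and commutator relations forming a relative version of the standard Steinberg presentation. Each such generator can be rewritten, using invertibility of $1+ab$ on $I$, as a product of Dennis-Stein symbols $\langle \cdot, \cdot \rangle$ with an argument in $I$. One then checks that every defining relation of the presentation becomes, under this translation, a consequence of the three Dennis-Stein relations, yielding a well-defined homomorphism $\psi \colon K_2(R,I) \to D_2(R,I)$.

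The hard part is this second direction: the relative Steinberg presentation contains many more families of relations than Dennis-Stein's three, reflecting the varied commutator structure of $x_{ij}$ generators across the different index patterns $(i,j,k,\ell)$. Reorganizing long products of relative Steinberg symbols into collections matching the additive and distributive Dennis-Stein laws is the bulk of Maazen and Stienstra's argument, and requires an inductive reduction on the word structure together with repeated use of the radical hypothesis on $I$ to absorb correction terms of the form $1 + (\text{element of } I)$. Once $\psi$ is in hand, the composites $\phi \circ \psi$ and $\psi \circ \phi$ are checked on the distinguished generating sets — a relative Dennis-Stein symbol on one side, a relative Steinberg generator on the other — where they reduce to the identity directly from the defining formulas.
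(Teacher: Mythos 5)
The paper does not prove this statement at all: it is imported as an external input, with the proof field consisting solely of the citation ``Maazen and Stienstra \cite{MaazenStienstra77}, Theorem 3.1, page 279.'' So there is no internal argument to compare yours against; what you have written is an attempted reconstruction of the cited proof, and it should be judged on its own terms.

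As such a reconstruction it points in a reasonable direction but has two concrete gaps. First, the displayed Steinberg word for $\langle a,b\rangle$ is not correct even ``roughly'': a product of four elementary generators $x_{12},x_{21}$ of that shape maps to a nontrivial matrix in $E(R)$ (its image is a nonidentity matrix with entries involving $(1+2ab)(1+ab)^{-1}$ and $-2a^2b$), so it does not define an element of $K_2(R)=\ker\bigl(\mathrm{St}(R)\to E(R)\bigr)$. The standard Dennis--Stein element requires the diagonal correction, e.g.
\[
\langle a,b\rangle = x_{21}\!\Bigl(\tfrac{-b}{1+ab}\Bigr)\,x_{12}(a)\,x_{21}(b)\,x_{12}\!\Bigl(\tfrac{-a}{1+ab}\Bigr)\,h_{12}(1+ab)^{-1},
\]
whose image in $E(R)$ is $\mathrm{diag}(1+ab,(1+ab)^{-1})\cdot\mathrm{diag}(1+ab,(1+ab)^{-1})^{-1}=1$; without the $h_{12}$ factor the whole first half collapses. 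Second, and more seriously, your construction of $\psi$ presupposes a generators-and-relations presentation of the relative group $K_2(R,I)$ for a general split radical pair. Producing such a presentation \emph{is} essentially the content of Maazen--Stienstra's theorem, so as stated the argument is circular (and the reference \cite{VanMaazenStienstra} you lean on treats the special truncated-polynomial pairs $K_2(R,n)$, not arbitrary split radical pairs). The actual route in the literature does not translate one known presentation into another; it builds a central extension of the relative elementary group out of the Dennis--Stein presentation, lets $\mathrm{St}(R)$ act on it using the splitting $R=S\oplus I$ and the radical hypothesis, and extracts the inverse map from the universal property of the Steinberg extension. If you want a self-contained proof, that is the machinery you would need to set up.
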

\begin{proof}Maazen and Stienstra \cite{MaazenStienstra77}, theorem 3.1, page 279.
\end{proof}
In general, the relative $K$-groups $K_n(R,I)$ are defined so as to possess convenient functorial properties, and this does not always lead to a simple description in terms of $K_n(R)$ and $K_n(S)$.\footnotemark\footnotetext{In particular, these groups are often defined via homotopy fibers, as mentioned in the first footnote of section \hyperref[subsectionstatement]{\ref{subsectionstatement}}.  This guarantees the existence of a long exact sequence relating absolute and relative groups.  See Weibel \cite{WeibelKBook} Chapter IV, page 8, for details.}  For the purposes of this paper, however, the relative groups $K_n (R,I)$ may be identified with the kernels $\tn{Ker}[K_n (R)\rightarrow K_n (R/I)]$, and similarly for the Milnor and Dennis-Stein $K$-groups.  This is because the extension of $S$ by $I$ to obtain $R$ is assumed to be a {\it split} extension. 


\subsection{Absolute K\"{a}hler Differentials; a Result of Bloch}\label{subsectionkahlerbloch}

K\"{a}hler differentials provide a purely algebraic notion of differential forms in the context of commutative rings.  In the noncommutative context, differential forms are superseded by algebra cohomology theories.  Historically, expanding the role of differential forms was one of the primary motivations for the development of cyclic homology and cohomology.  This renders natural the appearance of cyclic homology in Goodwillie's isomorphism.\\  
\begin{defi}\label{defikahler}Let $R$ be a commutative $k$-algebra over a commutative ring $k$ with identity. The $k$-module of {\bf K\"{a}hler differentials} $\Omega^1_{R/k}$ of $R$ with respect to $k$ is the module generated over $k$ by symbols of the form $rdr'$, subject to the relations 
\begin{enumerate}
\item $rd(\alpha r'+\beta r'')=\alpha rdr'+\beta rdr''$ for $\alpha,\beta\in k$ and  $r,r',r''\in M$ ($k$-linearity).
\item $rd(r'r'')=rr'dr''+rr''dr'$ (Leibniz rule).
\end{enumerate}
The ring $\Omega_{R/k}^\bullet$ of K\"{a}hler differentials of $R$ with respect to $k$ is the exterior algebra over $\Omega^1_{R/k}$; i.e., the graded ring whose zeroth graded piece is $k$, whose $n$th graded piece is $\bigwedge^n\Omega^1_{R/k}:=\Omega^n_{R/k}$, and whose multiplication is wedge product. 
\end{defi}
The differential graded ring $(\Omega_{R/k}^\bullet,d)$, where the map $d$ takes the differential $r_0dr_1\wedge...\wedge dr_n$ to  the differential $dr_0\wedge dr_1\wedge...\wedge dr_n$, may be viewed as a complex, called the {\it algebraic de Rham complex.}  If the ground ring $k$ is the ring of integers $\ZZ$, then the modules $\Omega^n_{R/\ZZ}$ are abelian groups (i.e., $\ZZ$-modules), called the groups of {\bf absolute K\"{a}hler differentials.}  Groups $\Omega^n_{R,I}$ of absolute K\"{a}hler differentials {\it relative to an ideal} $I\subset R$ may be defined, for the purposes of this paper, to be the kernels $\tn{Ker}\big[\Omega_{R/\ZZ}^n\rightarrow \Omega_{(R/I)/\ZZ}^n\big]$.

The following relationship between the second relative $K$-group and the first group of absolute K\"{a}hler differentials was first pointed out by Bloch \cite{BlochK2Artinian75}.  This result helps establish the base case of the theorem in lemma \hyperref[lembasecase]{\ref{lembasecase}} below.\\

\begin{theorem}\label{theorembloch} Suppose $R$ is a split nilpotent extension of a ring $S$, with extension ideal $I$ 
whose index of nilpotency is $N$.  Suppose further that every positive integer less than or equal to $N$
 is invertible in $S$.  Then 
\begin{equation}K_{2}(R,I)\cong \frac{\Omega_{R,I} ^1}{dI}.\end{equation}
\end{theorem}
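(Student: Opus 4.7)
The plan is to derive Theorem \ref{theorembloch} from Bloch's original argument in \cite{BlochK2Artinian75}, organized through the symbolic comparisons of Section \ref{subsectionsymbolicstability}. By Theorem \ref{theoremmaazen}, the natural map $D_2(R, I) \to K_2(R, I)$ is an isomorphism with no stability assumption needed, so it suffices to exhibit mutually inverse homomorphisms between $D_2(R, I)$ and $\Omega_{R,I}^1/dI$. The arithmetic hypothesis, that every positive integer up to the nilpotency index $N$ is invertible in $S$, is precisely what turns the truncated power series $\log(1 + t)$ and $\exp(t) - 1$ into mutually inverse bijections of the nilpotent ideal $I$; these bijections are the engine that converts symbolic relations into differential relations.

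For the forward map $\phi : D_2(R, I) \to \Omega_{R,I}^1/dI$, I would use the splitting $R = S \oplus I$ together with Dennis--Stein relations (1)--(3) of Definition \ref{defidennisstein} to reduce every generator of the relative group to a symbol $\langle a, b \rangle$ with $a \in I$, and on such a symbol set
\[
\phi\bigl(\langle a, b \rangle\bigr) \,:=\, \sum_{n \geq 1} \frac{(-1)^{n-1}}{n}\, a^n b^{n-1}\, db \pmod{dI}.
\]
This finite sum is the formal expression for $b^{-1}\log(1 + ab)\, db$, with the spurious $b$-denominator cancelling the factor $b^n$ in $(ab)^n$ term by term; the resulting polynomial lies in $\Omega_{R,I}^1$ because $a \in I$. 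The three Dennis--Stein relations are then checked to map into $dI$ by direct computation: relation (1) produces the total differential $d\log(1 + ab)$ of an element of $I$; the logarithmic functional equation $\log(1 + ab) + \log(1 + ac) = \log\bigl(1 + a(b + c + abc)\bigr)$ handles relation (2); and relation (3) follows from the Leibniz rule.

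Conversely, the inverse $\psi : \Omega_{R,I}^1/dI \to D_2(R, I)$ is built piecewise, in parallel with equation \ref{equationpsi} of the main theorem in the case $n = 1$: on a differential $a\, db$ with $a \in I$ and $b$ a unit one sets $\psi(a\, db)$ to be a Dennis--Stein symbol whose left entry is the exponential correction of $a$, and on $a\, db$ with both $a, b \in I$ one uses an analogous double-exponential prescription, checking that the two formulas agree where they overlap and that exact forms $da$ with $a \in I$ are killed. The compositions $\phi \circ \psi$ and $\psi \circ \phi$ then collapse to the formal identities $\exp \circ \log = \log \circ \exp = \mathrm{id}$ on the nilpotent ideal $I$. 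The main obstacle is the bookkeeping around Dennis--Stein relation (2), whose quadratic correction term $abc$ produces higher-order contributions that cancel only after the full power-series expansion of the logarithm is invoked; this is exactly the point at which the bound $N$ on the nilpotency index and the invertibility of the integers up to $N$ are both used simultaneously.
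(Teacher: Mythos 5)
Your proposal is correct and takes essentially the same route as the paper: the paper's proof of Theorem \ref{theorembloch} is simply a citation of Maazen and Stienstra's Example 3.12, i.e., it passes through $D_2(R,I)\cong K_2(R,I)$ and uses exactly the truncated $\log$/$\exp$ correspondence you describe, with the explicit maps $\langle a,b\rangle\mapsto\log(1+ab)\,db/b$ and $a\,db\mapsto\langle(e^{ab}-1)/b,\,b\rangle$ recorded later in Section \ref{subsectionbasecase}. One cosmetic slip: the image of Dennis--Stein relation (1) is $\log(1+ab)\,d(ab)/(ab)$, whose primitive in $I$ is $\sum_{n\ge1}\tfrac{(-1)^{n-1}}{n^2}(ab)^n$ rather than $\log(1+ab)$ itself, but it still visibly lies in $dI$, so the argument is unaffected.
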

\begin{proof}Maazen and Stienstra \cite{MaazenStienstra77}, Example 3.12 page 287.  
\end{proof}

\section{Calculus of Steinberg Symbols and K\"{a}hler Differentials}\label{sectionSteinbergKahler}

\subsection{Notation and Conventions for Symbols and Differentials}\label{subsectionnotation}

The proof in section \hyperref[sectionproof]{\ref{sectionproof}} involves a significant amount of symbolic manipulation.  To streamline this, I use the following notation and conventions:

\begin{enumerate}
\item $R$ is a split nilpotent extension of a $5$-fold stable ring $S$, with extension ideal $I$, 
whose index of nilpotency is $N$.  The multiplicative group of invertible elements of $R$ is denoted by $R^*$.  The subset of elements of the form $1+i$, where $i\in I$, is a subgroup of $R^*$.  It is denoted by $(1+I)^*$ to emphasize its multiplicative structure. 
\item Individual letters, such as $r$ and $r'$, are used to denote elements of $R$.
\item Ordered tuples of elements of $R$ are usually numbered beginning with zero: $(r_0,r_1,...,r_n)$. 
\item ``Bar notation" is often used to abbreviate ordered tuples of elements of $R$.  For example, the expression $(r_0,\bar{r})$ might be used to denote the $(n+1)$-tuple $(r_0,r_1,...,r_n)$, where $\bar{r}$ stands for the last $n$ entries $r_1,...,r_n$.    Similarly, the expression $(\bar{r},r_j,\bar{r}')$ might be used to denote the $n$-tuple $(r_0,...,r_{j-1},r_j,r_{j+1},...,r_n)$, where $\bar{r}$ stands for $r_0,...,r_{j-1}$, and $\bar{r}'$ stands for $r_{j+1},...,r_{n}$.  The number of elements represented by a barred letter is either stated explicitly, determined by context, or immaterial.  In particular, $\bar{r}$ may be empty.  For example, the multiplicativity relation $\{\bar{r},rr',\bar{r}'\}=\{\bar{r},r,\bar{r}'\}\{\bar{r},r',\bar{r}'\}$ in lemma \hyperref[lemMilnorrelations]{\ref{lemMilnorrelations}} below includes the case $\{rr',\bar{r}'\}=\{r,\bar{r}'\}\{r',\bar{r}'\}$, where $\bar{r}$ is empty.
\item Let $(\bar{r})=(r_0,r_1,...,r_n)$ be an ordered $(n+1)$-tuple of elements of $R$.  Then the expression $\bar{r}\in R^*$ means $(r_0,...,r_n)\in (R^*)^{n+1}$.  Similarly, $\{\bar{r}\}$ means the Steinberg symbol corresponding to $\{\bar{r}\}$, if it exists; $d\bar{r}$ means $dr_0\wedge dr_1\wedge...$, and $e^{\bar{r}}$ means $(e^{r_0},e^{r_1},...)$.   
\item Instances of ``capital pi," such as $\Pi$ and $\Pi'$, stand for the products of the entries of tuples such as $(\bar{r})$ and $(\bar{r}')$.(
\item The ``hat notation" $(r_0,...,\hat{r_j},...,r_n)$ denotes the $n$-tuple given by omitting the $j$th entry $r_j$ from the $(n+1)$-tuple $r_0,...,r_j,...,r_n$.   The hat notation may be used to omit multiple entries of an ordered tuple. 
\item The group operation in the Milnor $K$-group $K_n ^{\tn{\fsz{M}}}(R)$ is expressed as multiplication (juxtaposition of Steinberg symbols), although it is actually addition in the Milnor $K$-ring $K^{\tn{\fsz{M}}}(R)$.  
\item The ring multiplication $K_m ^{\tn{\fsz{M}}}(R)\times K_n ^{\tn{\fsz{M}}}(R)\rightarrow K_{m+n} ^{\tn{\fsz{M}}}(R)$ in the Milnor $K$-ring $K^{\tn{\fsz{M}}}(R)$ is expressed abstractly by the symbol $\times$, or concretely by concatenation of the entries of Steinberg symbols.  For example, the distributive law is expressed as 
\[\{\bar{r}\}\times(\{\bar{r}'\}\{\bar{r}''\})=(\{\bar{r}\}\times\{\bar{r}'\})(\{\bar{r}\}\times\{\bar{r}''\})
=\{\bar{r},\bar{r}'\}\{\bar{r},\bar{r}''\},\]
for $\{\bar{r}\}\in K_l ^{\tn{\fsz{M}}}(R), \{\bar{r}'\}\in K_m ^{\tn{\fsz{M}}}(R),$ and $\{\bar{r}''\}\in K_n ^{\tn{\fsz{M}}}(R)$. 
\end{enumerate}


\subsection{Generators and Relations for Milnor $K$-Theory}\label{generatorsMilnor}

In this section, I gather together some elementary results about Steinberg symbols that are useful for the computations in sections \hyperref[subsectionanalysisphi]{\ref{subsectionanalysisphi}} and \hyperref[subsectionanalysispsi]{\ref{subsectionanalysispsi}}.  For numbering consistency, I work with $K_{n+1} ^{\tn{\fsz{M}}}(R)$, rather than $K_{n} ^{\tn{\fsz{M}}}(R)$, since the former group is the one appearing in theorem.  Throughout this section, $n$ is a nonnegative integer.\\

\begin{lem}\label{lemMilnorrelations}As an abstract multiplicative group, $K_{n+1} ^{\tn{\fsz{M}}}(R)$ is generated by the Steinberg symbols $\{r_0,...,r_n\}$, where $r_j\in R^*$ for all $j$, subject to the relations
\begin{enumerate}
\addtocounter{enumi}{-1}
\item $K_{n+1} ^{\tn{\fsz{M}}}(R)$ is abelian.
\item Multiplicative relation: $\{\bar{r},rr',\bar{r}'\}\{\bar{r},r,\bar{r}'\}^{-1}\{\bar{r},r',\bar{r}'\}^{-1}=1$.
\item Steinberg relation: $\{\bar{r},r,1-r,\bar{r}'\}=1$.
\end{enumerate}
\end{lem}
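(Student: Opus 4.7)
The plan is to unpack Definition \ref{defiMilnorK} and read off the presentation directly. By construction, $K_{n+1}^{\tn{\fsz{M}}}(R)$ is the $(n+1)$st graded piece of the quotient ring $T_{R^*/\mathbb{Z}}/I_{\tn{\fsz{St}}}$, which as an abelian group coincides with $(R^*)^{\otimes(n+1)}/(I_{\tn{\fsz{St}}})_{n+1}$. Pure tensors map to Steinberg symbols $\{r_0,\ldots,r_n\}$ with $r_j\in R^*$, and under the notational convention of Section \ref{subsectionnotation} the underlying additive group operation in $K^{\tn{\fsz{M}}}(R)$ is written multiplicatively. This already supplies generation.

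Next I would verify that each of the three relations is forced by this definition. Relation 0 is automatic: the group operation on $K_{n+1}^{\tn{\fsz{M}}}(R)$ is the addition of the graded ring $K^{\tn{\fsz{M}}}(R)$, and ring addition is always commutative. Relation 1 is the translation of $\mathbb{Z}$-bilinearity of the tensor product, applied with $R^*$ viewed as a $\mathbb{Z}$-module under multiplication: the tensor-algebra identity
\[\bar{r}\otimes(r\cdot r')\otimes \bar{r}'=\bar{r}\otimes r\otimes \bar{r}'+\bar{r}\otimes r'\otimes \bar{r}',\]
after renaming pure tensors as Steinberg symbols and writing ring addition multiplicatively, becomes exactly $\{\bar{r},rr',\bar{r}'\}=\{\bar{r},r,\bar{r}'\}\{\bar{r},r',\bar{r}'\}$. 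Relation 2 follows because $I_{\tn{\fsz{St}}}$ is by definition a \emph{two-sided} ideal of $T_{R^*/\mathbb{Z}}$ containing every generator $r\otimes(1-r)$, hence also every sandwich product $\bar{r}\otimes r\otimes(1-r)\otimes \bar{r}'$ of the appropriate total degree, which therefore dies in the quotient.

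For completeness I would argue as follows. The universal property of the tensor product over $\mathbb{Z}$ presents the abelian group $(R^*)^{\otimes(n+1)}$ as the quotient of the free abelian group on $(n+1)$-tuples of units by precisely the multilinearity relations in each slot; these are relations 0 and 1 under the symbolic renaming. Passing further to $(R^*)^{\otimes(n+1)}/(I_{\tn{\fsz{St}}})_{n+1}$ adjoins as new relations only the $\mathbb{Z}$-linear span of the sandwich generators $\bar{r}\otimes r\otimes(1-r)\otimes \bar{r}'$ that $\mathbb{Z}$-linearly span the two-sided ideal in degree $n+1$; this is relation 2. The main obstacle is really only notational bookkeeping: one must consistently remember that the multiplication of Steinberg symbols in relation 1 encodes the additive structure on a tensor product of a multiplicatively written group, and that the arbitrary-position form of the Steinberg relation in relation 2 is forced by the two-sided ideal property rather than needing to be imposed separately at each position.
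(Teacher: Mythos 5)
Your proposal is correct and follows essentially the same route as the paper, which likewise derives the presentation directly from Definition \ref{defiMilnorK}: multilinearity of the tensor product over $\mathbb{Z}$ yields the multiplicative relation, and quotienting by the two-sided ideal $I_{\tn{\fsz{St}}}$ yields the Steinberg relation in arbitrary position. Your write-up is somewhat more explicit than the paper's (which only spells out the $K_2$ case), but the underlying argument is identical.
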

\begin{proof} This follows directly from definition \hyperref[defiMilnorK]{\ref{defiMilnorK}} and the properties of the tensor algebra.  For example, in the case of $K_{2} ^{\tn{\fsz{M}}}(R)$, the tensor product relations in $R^*\otimes_\ZZ R^*$ are 
\[rr'\otimes r''=(r\otimes r'')(r'\otimes r''),\hspace{.3cm} r\otimes r'r''=(r\otimes r')(r\otimes r''),\hspace{.2cm}\tn{and}\hspace{.2cm}r^n\otimes r'=r\otimes(r')^n =(r\otimes r')^n \hspace{.2cm}\tn{for}\hspace{.2cm} n\in\ZZ,\]
since the operations in $R^*$ and $R^*\otimes_\ZZ R^*$ are expressed multiplicatively.  These three relations are equivalent to the multiplicativity relation in the statement of the lemma, while imposing the Steinberg relation is equivalent to quotienting out the Steinberg ideal $I_{\tn{\fsz{St}}}$.  
\end{proof}

Lemma \hyperref[lemMilnorrelations]{\ref{lemMilnorrelations}} translates the na\"{i}ve tensor algebra definition of Milnor $K$-theory into a definition in terms of Steinberg symbols and relations.  The following lemma gathers together additional relations satisfied by Steinberg symbols in the $5$-fold stable case.  The first of these, the idempotent relation, actually requires no stability assumption, but is included here, rather than in lemma \hyperref[lemMilnorrelations]{\ref{lemMilnorrelations}}, because it is information-theoretically redundant.  The other two relations, however, require the ring to have ``enough units." \\

\begin{lem}\label{lemrelationsstable} Let $R$ be a $5$-fold stable ring.  Then the Steinberg symbols $\{r_0,...,r_n\}$ generating $K_{n+1} ^{\tn{\fsz{M}}}(R)$ satisfy the following additional relations:
\begin{enumerate}
\addtocounter{enumi}{2}
\item Idempotent relation: if $e\in R^*$ is idempotent, then $\{\bar{r},e\}=1$ in $K_{n+1}^{\tn{\fsz{M}}}(R)$.
\item Additive inverse relation: $\{\bar{r},r,-r,\bar{r}\}=1$ in $K_{n+1}^{\tn{\fsz{M}}}(R)$.
\item Anticommutativity: $\{\bar{r}, r',r, \bar{r}'\}=\{\bar{r},r,r',\bar{r}'\}^{-1}.$
\end{enumerate}
\end{lem}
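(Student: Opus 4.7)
The plan is to establish the three relations in order. Relation (3) is purely formal; relation (4) is the substantive step and the only place where $5$-fold stability enters; relation (5) follows from (4) by a short symbol manipulation. All three relations are classical for fields (going back to Milnor) and the main work here is to verify them in the ring-theoretic setting under the stability hypothesis.

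For the idempotent relation (3), no stability is needed. If $e \in R^*$ satisfies $e^2 = e$, multiplication by $e^{-1}$ gives $e = 1$. The multiplicative relation from Lemma~\ref{lemMilnorrelations} applied to $1 = 1 \cdot 1$ then reads $\{\bar r, 1\} = \{\bar r, 1\}\{\bar r, 1\}$, forcing $\{\bar r, 1\} = 1$ in $K_{n+1}^{\tn{\fsz{M}}}(R)$.

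For the additive inverse relation (4), I would first reduce to the $K_2^{\tn{\fsz{M}}}$ case: once $\{r, -r\} = 1$ is known, the distributivity of $\times$ over multiplication gives $\{\bar r, r, -r, \bar r'\} = \{\bar r\} \times \{r, -r\} \times \{\bar r'\} = 1$. I then handle the \emph{good case} where $1 - r \in R^*$ (which forces $1 - r^{-1} = r^{-1}(r - 1) \in R^*$ as well). Here the algebraic identity $-r = (1 - r)(1 - r^{-1})^{-1}$ together with multiplicativity yields
\[
\{r, -r\} = \{r, 1-r\}\{r, 1 - r^{-1}\}^{-1}.
\]
The first factor is trivial by Steinberg, and the formal identity $\{r^{-1}, s\} = \{r, s\}^{-1}$ (from multiplicativity together with $\{r, 1\} = 1$) reduces the second to $\{r^{-1}, 1 - r^{-1}\}^{-1} = 1$, again by Steinberg. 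For a general $r \in R^*$, for which $1 - r$ may fail to be a unit, the $5$-fold stability hypothesis is invoked to produce a factorization $r = r_1 r_2$ into good units; multiplicativity in the first slot gives $\{r, -r\} = \{r_1, -r\}\{r_2, -r\}$, and a symmetric treatment of each factor reduces everything to the good case.

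For anticommutativity (5), the classical trick closes everything out. From (4) we have $\{rr', -rr'\} = 1$, and expanding $-rr' = (-r) \cdot r' = r \cdot (-r')$ in two different ways via multiplicativity gives
\[
1 = \{r, -r\}\{r, r'\}\{r', r\}\{r', -r'\} = \{r, r'\}\{r', r\}
\]
in $K_2^{\tn{\fsz{M}}}(R)$, so $\{r', r\} = \{r, r'\}^{-1}$. Extending to $K_{n+1}^{\tn{\fsz{M}}}(R)$ via the ring product $\times$ then gives the full relation stated. The main obstacle in the whole argument is the stability-based factorization in (4); $5$-fold stability is more than enough (in fact $2$-fold already suffices), and this is exactly where Van der Kallen's ``enough units'' condition pays off. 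Relations (3) and (5) are then essentially symbol bookkeeping.
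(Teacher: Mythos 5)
Your treatment of the idempotent relation and your derivation of anticommutativity from the additive inverse relation are both correct and match what the paper does (the latter appears, almost verbatim, in the paper's discussion following the lemma as the ``Milnor/Hutchinson argument''). The problem is your proof of the additive inverse relation for a general unit $r$, which is exactly the point where the paper declines to argue and instead cites Van der Kallen's Theorem 8.4. Your reduction via a factorization $r=r_1r_2$ into ``good'' units does not close up. Expanding with multiplicativity alone gives
\[
\{r,-r\}=\{r_1r_2,-r_1r_2\}=\{r_1,-r_1\}\,\{r_1,r_2\}\,\{r_2,r_1\}\,\{r_2,-r_2\},
\]
and after killing the two good-case factors you are left with $\{r_1,r_2\}\{r_2,r_1\}$. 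Showing that this residual product is trivial is precisely anticommutativity for the pair $(r_1,r_2)$, and the only derivation of anticommutativity available at this stage (the one you yourself use for relation (5)) requires $\{r_1r_2,-r_1r_2\}=\{r,-r\}=1$ --- the statement being proved. So the argument is circular. The symbol $\{r_1,-r\}$ is likewise not an instance of the good case, since $-r\neq -r_1$, so ``a symmetric treatment of each factor'' does not apply. This is not a presentational quibble: deriving $\{r,-r\}=1$ from only the multiplicative and Steinberg relations in a ring where $1-r$ need not be a unit is genuinely hard, which is why Elbaz-Vincent and M\"{u}ller-Stach simply impose it as an axiom and why the paper leans entirely on Van der Kallen's Theorem 8.4 for relations (4) and (5). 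Your parenthetical claim that $2$-fold stability already suffices is unsupported and sits uneasily with the fact, noted in the paper, that it is still unknown whether even $4$-fold stability suffices for Van der Kallen's theorem.

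For comparison, the paper's actual proof is short: it verifies the idempotent relation by the same multiplicativity trick you use (without needing to observe that an invertible idempotent equals $1$), and then cites Van der Kallen for the other two relations. If you want a self-contained argument you would need to reproduce the substance of Van der Kallen's proof, which goes through the Dennis--Stein presentation and is considerably more involved than a factorization into good units.
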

\begin{proof}  The idempotent relation requires no stability assumption.  Indeed, multiplicativity implies that $\{\bar{r},e\}=\{\bar{r},e\}\{\bar{r},e\}$.  Multiplying both sides by $\{\bar{r},e\}^{-1}$ yields $\{\bar{r},e\}=1$.  The additive inverse relations and anticommutativity are established in the $5$-fold stable case by Van der Kallen \cite{VanderKallenRingswithManyUnits77} Theorem 8.4, page 509. 
\end{proof}

A few remarks concerning the interdependence of these relations may be helpful.  The additive inverse relation implies anticommutativity, as demonstrated by the following sequence of manipulations, copied from Rosenberg's proof\footnotemark\footnotetext{See Rosenberg \cite{RosenbergK94} Theorem 4.3.15, page 214.  Rosenberg follows Hutchinson's proof \cite{HutchinsonMatsumoto90}, but Hutchinson credits this particular argument to Milnor.} of Matsumoto's theorem: 
\begin{equation}\label{equmatsumotoarg}\begin{array}{lcl} \{r,r'\}&=&\{r,r'\}\{r,-r\}=\{r,-rr'\}\\&=&\{rr'(r')^{-1},-rr'\}=\{rr',-rr'\}\{(r')^{-1},-rr'\}\\&=&\{r',-rr'\}^{-1}=\{r',r\}^{-1}\{r',-r'\}^{-1}\\&=&\{r',r\}^{-1}. \end{array}\end{equation}
However, the additive inverse relation itself depends on expressing the symbol $\{r,-r\}$ as a product of symbols involving the multiplicative and Steinberg relations.  If $R$ is a field, this is easy, since in this case $1-r$ is a unit whenever $r$ is a unit not equal to $1$.   Indeed, from the identity $(1-r)r^{-1}=-(1-r^{-1})$, it follows that $(1-r^{-1})$ is invertible.  Rearranging yields the expression $-r=(1-r)(1-r^{-1})^{-1}.$ Thus,
\begin{equation}\label{equadditiveinversearg}\begin{array}{lcl} \{r,-r\}&=&\{r,(1-r)(1-r^{-1})^{-1}\}=\{r,1-r\}\{r,(1-r^{-1})^{-1}\}\\
&=&\{r,(1-r^{-1})^{-1}\}=\{r^{-1},1-r^{-1}\}\\
&=&1,\end{array}\end{equation}
by repeated application of multiplicatively and the Steinberg relations. If $R$ is a local ring in which $2$ is invertible, then either $1+r$ or $1-r$ is invertible, and again the result is easy.\footnotemark\footnotetext{Suppose neither $1+r$ nor $1-r$ is invertible.  Then both elements belong to the maximal ideal $M$ of $R$, so their difference $2r$ belongs to $M$.  Since $2$ is invertible, $r$ belongs to $M$, a contradiction, since $r$ is invertible. An argument analogous to the one appearing in equation \hyperref[equadditiveinversearg]{\ref{equadditiveinversearg}} now applies.}  Van der Kallen stability is a much more general criterion permitting the same conclusion.  As mentioned above, some authors, such as Elbaz-Vincent and M\"{u}ller-Stach \cite{ElbazVincentMilnor02}, take the additive inverse relation to be part of the {\it definition} of Milnor $K$-theory.  This obviates the need for stability hypotheses in this context, at the expense of the tensor algebra definition \hyperref[defiMilnorK]{\ref{defiMilnorK}}, and consequently, at the expense of the description in terms of differentials given by the main theorem in equation \hyperref[equmaintheorem]{\ref{equmaintheorem}}.

The following lemma is useful for the factorization of Steinberg symbols used in the proof of lemma \hyperref[lempatchingphi]{\ref{lempatchingphi}} below:\\

\begin{lem}\label{lemrelativegenerators} Let $R$ be split nilpotent extension of a ring $S$, with extension ideal $I$. Then the relative Milnor $K$-group $K_{n+1} ^{\tn{\fsz{M}}}(R,I)$ is generated by Steinberg symbols $\{\bar{r}\}=\{r_0,...,r_n\}$ with at least one entry $r_j$ belonging to $(1+I)^*$. 
\end{lem}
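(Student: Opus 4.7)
The plan is to exploit the splitting $R \cong S \oplus I$ (as abelian groups) together with nilpotence of $I$ to unwind every Steinberg symbol into pieces whose nontrivial content is concentrated in $(1+I)^*$. First I would record the key factorization of units: since $I$ is nilpotent, an element $r = s + i$ with $s \in S$ and $i \in I$ is a unit in $R$ if and only if $s$ is a unit in $S$, in which case $r = s(1 + s^{-1}i)$ with $1 + s^{-1}i \in (1+I)^*$. Hence $R^* = S^* \cdot (1+I)^*$, with the two factors meeting only in $\{1\}$.

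Next I would invoke the identification $K_{n+1}^{\tn{\fsz{M}}}(R,I) = \tn{Ker}[K_{n+1}^{\tn{\fsz{M}}}(R) \to K_{n+1}^{\tn{\fsz{M}}}(S)]$ recorded in Section \ref{subsectionsymbolicstability}, which is valid because the extension is split. Given any Steinberg symbol $\{r_0, \ldots, r_n\} \in K_{n+1}^{\tn{\fsz{M}}}(R)$, write each $r_j = s_j u_j$ with $s_j \in S^*$ and $u_j \in (1+I)^*$, and expand using the multiplicativity relation of Lemma \ref{lemMilnorrelations} in each slot. This rewrites the symbol as a product of $2^{n+1}$ symbols $\{\epsilon_0, \ldots, \epsilon_n\}$ where each $\epsilon_j$ is either $s_j$ or $u_j$. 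Collecting those symbols with entries all in $S^*$ into a factor $y$, and those with at least one entry in $(1+I)^*$ into a factor $z$, gives $\{r_0, \ldots, r_n\} = y \cdot z$.

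Finally I would pass to the projection. Let $\pi \colon K_{n+1}^{\tn{\fsz{M}}}(R) \to K_{n+1}^{\tn{\fsz{M}}}(S)$ and $\iota \colon K_{n+1}^{\tn{\fsz{M}}}(S) \to K_{n+1}^{\tn{\fsz{M}}}(R)$ be the maps induced by $R \to S$ and its section, so that $\pi \circ \iota = \mathrm{id}$. Each symbol contributing to $z$ has some entry $u_j$ that $\pi$ sends to $1 \in S^*$, and a Steinberg symbol with $1$ as an entry is trivial by multiplicativity; hence $\pi(z) = 1$. The symbols in $y$ have all entries in the image of the section, so $y = \iota(y')$ for a unique $y' \in K_{n+1}^{\tn{\fsz{M}}}(S)$. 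Applying this to an arbitrary $x \in \tn{Ker}\,\pi$ expanded as a product of Steinberg symbols and their inverses, we collect to get $x = \iota(y') \cdot z$; then $1 = \pi(x) = y'$ forces $\iota(y') = 1$, and $x = z$ is the desired presentation. The only substantive input is the factorization $R^* = S^* \cdot (1+I)^*$, which comes for free from nilpotence of $I$; everything else is formal manipulation with multiplicativity and the split of the extension, so no real obstacle arises.
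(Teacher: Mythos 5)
Your proposal is correct and follows essentially the same route as the paper's proof: factor each unit as $r_j = s_j(1+i_j)$ via the splitting, expand the symbol multilinearly into $2^{n+1}$ factors, observe that every factor with an entry in $(1+I)^*$ dies under the projection to $K_{n+1}^{\tn{\fsz{M}}}(S)$, and conclude that membership in the kernel forces the purely-$S^*$ factor to be trivial. Your explicit use of the section $\iota$ and the extension to arbitrary products of symbols and inverses is a slightly more careful rendering of the same argument.
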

\begin{proof}  By the splitting $R=S\oplus I$, any element $r$ of $R^*$ may be written uniquely as a product $s(1+i)$, where $s$ belongs to $S^*$ and $i$ belongs to $I$.  Hence, the Steinberg symbol $\{\bar{r}\}$ may be factored into a product 
\begin{equation}\label{factorizationbarr}\{\bar{r}\}=\{\bar{s}\}\prod_l\{\bar{r}_l'\},\end{equation}
 where each entry of $\{\bar{s}\}$ belongs to $S^*$, and where each factor $\{\bar{r}_l'\}$ has at least one entry in $(1+I)^*$.   For example, the Steinberg symbol $\{r_0,r_1\}$ factors as follows:
\[\{r_0,r_1\}=\{s_0,s_1\}\{s_0,1+i_1\}\{1+i_0,s_1\}\{1+i_0,1+i_1\}.\]
In general, there are $2^{n+1}-1$ factors of the form $\{\bar{r}_l'\}$ in equation \hyperref[factorizationbarr]{\ref{factorizationbarr}}.  Under the map $K_{n+1} ^{\tn{\fsz{M}}}(R)\rightarrow K_{n+1} ^{\tn{\fsz{M}}}(S)$ induced by the canonical surjection $R\rightarrow S$, the factors $\{\bar{r}_l'\}$ all map to the identity by the idempotent relation. Therefore, $\{\bar{r}\}$ maps to $\{\bar{s}\}$.   Hence, $\{\bar{r}\}$ belongs to the kernel $\tn{Ker}[K_{n+1} ^{\tn{\fsz{M}}}(R)\rightarrow K_{n+1} ^{\tn{\fsz{M}}}(S)]=K_{n+1} ^{\tn{\fsz{M}}}(R,I)$ if and only if $\{\bar{s}\}=1$.   Thus, each element $\{\bar{r}\}$ of $K_{n+1} ^{\tn{\fsz{M}}}(R,I)$ admits a factorization $\prod_l\{\bar{r}_l'\}$, where each factor $\{\bar{r}_l'\}$ has at least one entry in $(1+I)^*$.  
\end{proof}



\subsection{Generators and Relations for K\"{a}hler Differentials}\label{subsectiongeneratorsKahler}

In this section, I describe the groups $\Omega_{R}^{n}/d\Omega_{R}^{n-1}$ and $\Omega_{R,I}^{n}/d\Omega_{R,I}^{n-1}$ in more detail in terms of generators and relations.  

\begin{lem}\label{lemKahlerrelations}As an abstract additive group, $\Omega_{R/\ZZ}^{n}/d\Omega_{R/\ZZ}^{n-1}$ is generated by  differentials $r_0dr_1\wedge...\wedge dr_n$, where $r_j\in R$ for all $j$, subject to the relations
\begin{enumerate}
\addtocounter{enumi}{-1}
\item $\Omega_{R/\ZZ}^{n+1}/d\Omega_{R/\ZZ}^{n}$ is abelian.
\item Additive relation: $(r+r')d\bar{r}=rd\bar{r}+r'd\bar{r}$.  
\item Leibniz rule: $rd(r'r'')\wedge d\bar{r}=rr'dr''\wedge d\bar{r}+rr''dr'\wedge d\bar{r}$.  
\item Alternating relation: $r_0dr_1\wedge...\wedge dr_j\wedge dr_{j+1}\wedge...\wedge dr_n=-r_0dr_1\wedge...\wedge dr_{j+1}\wedge dr_j\wedge...\wedge dr_n$.
\item Exactness: $d\bar{r}=0$. 
\end{enumerate}
\end{lem}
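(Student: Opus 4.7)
The statement is essentially a translation of Definition \ref{defikahler} (combined with the quotient by exact forms) into the language of generators and relations, so my plan is to verify that the listed relations are (a) satisfied in $\Omega_{R/\ZZ}^n/d\Omega_{R/\ZZ}^{n-1}$ and (b) sufficient to present the group. The forward direction (a) is routine: relations 1 and 2 are built into the definition of $\Omega^1_{R/\ZZ}$ as a module; relation 3 holds in the exterior algebra $\Omega_{R/\ZZ}^\bullet = \bigwedge \Omega_{R/\ZZ}^1$ because $dr \wedge dr = 0$ combined with bilinearity of the wedge gives $dr \wedge dr' = -dr' \wedge dr$; and relation 4 is exactly what quotienting by $d\Omega_{R/\ZZ}^{n-1}$ imposes, since $d(r_0 dr_1 \wedge \cdots \wedge dr_{n-1}) = dr_0 \wedge dr_1 \wedge \cdots \wedge dr_{n-1}$ runs through a generating set for exact $n$-forms by the Leibniz rule.

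For the harder direction (b), I would define $G_n$ to be the abstract abelian group on symbols $r_0 dr_1 \wedge \cdots \wedge dr_n$ modulo relations 1--4, and construct mutually inverse homomorphisms $G_n \leftrightarrows \Omega_{R/\ZZ}^n/d\Omega_{R/\ZZ}^{n-1}$. The map $G_n \to \Omega_{R/\ZZ}^n/d\Omega_{R/\ZZ}^{n-1}$ is the obvious one sending a symbol to its class, well-defined because the relations hold in the target by part (a). For the reverse map, I would first observe that the relations 1 and 2 alone yield a well-defined homomorphism from $\Omega_{R/\ZZ}^1$ into the subgroup of $G_1$ generated by symbols $rdr'$ (this uses the universal property of $\Omega^1$: $\ZZ$-linearity of $d$ follows from relation 1, and the Leibniz rule is relation 2). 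Extending multilinearly and using relation 3 gives a map $\Omega_{R/\ZZ}^n \to G_n$ from the exterior power, and this map annihilates exact forms by relation 4, hence factors through $\Omega_{R/\ZZ}^n/d\Omega_{R/\ZZ}^{n-1}$.

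The only mildly delicate step, and the one I would treat carefully, is verifying that relation 2 as stated (the ``form-valued'' Leibniz rule $r d(r'r'') \wedge d\bar r = rr' dr'' \wedge d\bar r + rr'' dr' \wedge d\bar r$) suffices to recover the full Leibniz rule in $\Omega^1_{R/\ZZ}$ and is compatible with the multilinearity needed to pass to exterior powers. The point is that taking the ``tail'' $d\bar r$ to be empty recovers the module-level Leibniz rule, and, combined with relations 1 and 3, this forces the symbols $r_0 dr_1 \wedge \cdots \wedge dr_n$ to be $\ZZ$-multilinear and alternating in the arguments $r_1,\dots,r_n$ with a derivation compatible with multiplication in each slot, which is exactly the presentation of the $n$-th exterior power of $\Omega^1_{R/\ZZ}$.

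Finally, the relative version $\Omega_{R,I}^n/d\Omega_{R,I}^{n-1}$ is obtained by restricting to generators with at least one factor in $I$, which is compatible with the split surjection $R \to R/I$ as in the convention recalled after the Main Theorem; no additional relations beyond 0--4 are needed once one is working with symbols representing elements of the kernel. I expect the entire argument to be a formal bookkeeping exercise, with the only substantive check being the verification above that relation 2 plus relation 1 really does encode the universal property of $\Omega_{R/\ZZ}^1$.
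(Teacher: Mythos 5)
Your overall architecture---verify the relations hold in $\Omega_{R/\ZZ}^{n}/d\Omega_{R/\ZZ}^{n-1}$, then build mutually inverse homomorphisms between the abstractly presented group and the actual quotient---is in substance the same as the paper's proof, which simply asserts that the lemma ``follows directly from definition \ref{defikahler} and the properties of the exterior algebra.'' You have supplied the details the paper omits, and the skeleton is sound.

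There is, however, one misstep, and it occurs precisely at the ``delicate step'' you flag at the end. Relation 1 of the lemma is additivity in the \emph{coefficient} slot, $(r+r')d\bar r = rd\bar r + r'd\bar r$; it is not the $k$-linearity clause of definition \ref{defikahler}, which asserts additivity of $d$ in a \emph{differentiated} slot. Consequently relations 1 and 2 alone do not ``encode the universal property of $\Omega^1_{R/\ZZ}$'': from them you cannot derive $1\cdot d(r+r') = 1\cdot dr + 1\cdot dr'$, so the map you want from $\Omega^1_{R/\ZZ}$ into the symbol group is not yet well defined at that stage. The missing ingredient is exactness (relation 4). Applying the Leibniz rule to the exact generator $d(r_0r_1)\wedge dr_2\wedge\cdots\wedge dr_n=0$ gives $r_1\,dr_0\wedge dr_2\wedge\cdots\wedge dr_n=-r_0\,dr_1\wedge dr_2\wedge\cdots\wedge dr_n$, so the alternating property extends to the coefficient slot; transporting relation 1 through this identity then yields additivity (and, with relation 2, the full derivation property) in every differentiated slot. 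This is exactly the content of the remark the paper places immediately after the lemma. Your construction still goes through, because the group you present carries relation 4 and you only ever need the map on the quotient $\Omega_{R/\ZZ}^{n}/d\Omega_{R/\ZZ}^{n-1}$, but the justification of the reverse homomorphism must route through exactness rather than through relations 1 and 2 alone; as stated, your final claim that ``relation 2 plus relation 1 really does encode the universal property of $\Omega^1_{R/\ZZ}$'' is false.
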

\begin{proof} This follows directly from definition \hyperref[defikahler]{\ref{defikahler}} and the properties of the exterior algebra.   
\end{proof}
These relations, of course, imply other familiar relations.  For example, the Leibniz rule and exactness together imply that
\[r_1dr_0\wedge dr_2\wedge...\wedge dr_n= -r_0dr_1\wedge dr_2\wedge...\wedge dr_n,\]
 so the alternating property ``extends to coefficients."   This, in turn, implies that additivity is not ``confined to coefficients:" 
\[r_0dr_1\wedge...\wedge d(r_j+r_j')\wedge...\wedge dr_n=r_0dr_1\wedge...\wedge dr_j\wedge...\wedge dr_n+r_0dr_1\wedge...\wedge dr_j'\wedge...\wedge dr_n.\]
Similarly, repeated use of the alternating relation implies that applying a permutation to the elements $r_0,...,r_n$ appearing in the differential $r_0dr_1\wedge...\wedge dr_n$ yields the same differential, multiplied by the sign of the permutation. 

The following lemma establishes properties of K\"{a}hler differentials analogous to the properties of Milnor $K$-groups established in lemma \hyperref[lemrelativegenerators]{\ref{lemrelativegenerators}}.  Minor stability and invertibility assumptions are necessary to yield the desired results.\\

\begin{lem}\label{lemrelativekahlergenerators} Let $R$ be split nilpotent extension of a $2$-fold stable ring $S$, in which $2$ is invertible.  Let $I$ be the extension ideal.  
\begin{enumerate}
\item The group $\Omega_{R,I}^n$ of absolute K\"{a}hler differentials of degree $n$ relative to $I$ is generated by differentials of the form $rd\bar{r}\wedge d\bar{r}'$, where $r$ is either $1$ or belongs to $I$, where $\bar{r}\in I$, and where $\bar{r}'\in R^*$.  
\item The group $\Omega_{R,I}^{n}/d\Omega_{R,I}^{n-1}$ is a subgroup of the group $\Omega_{R/\ZZ}^{n}/d\Omega_{R/\ZZ}^{n-1}$.  In particular, $d\Omega_{R,I}^{n-1}=d\Omega_{R}^{n-1}\cap \Omega_{R,I}^{n}$.  
\end{enumerate}
\end{lem}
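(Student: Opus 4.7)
My plan exploits the observation that the split surjection $\pi \colon R \to S$ with section $\iota \colon S \to R$ induces an idempotent $\iota_*\pi_*$ on $\Omega_{R/\ZZ}^n$, giving a direct sum decomposition $\Omega_{R/\ZZ}^n = \iota_*\Omega_{S/\ZZ}^n \oplus \Omega_{R,I}^n$, with the projection onto the second summand given by $\omega \mapsto \omega - \iota_*\pi_*\omega$. This decomposition reduces both parts of the lemma to manipulations that respect the splitting.

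For part 1, I would start from an arbitrary generator $\omega = r_0\,dr_1 \wedge \dots \wedge dr_n$ of $\Omega_{R/\ZZ}^n$ and expand by additivity: writing each $r_j = s_j + i_j$ with $s_j \in S$ and $i_j \in I$ gives $\omega = \sum_{T \subseteq \{0,\dots,n\}} \omega_T$, where $\omega_T$ has $i_j$ (respectively $di_j$) in position $j$ for $j \in T$ and $s_j$ (respectively $ds_j$) for $j \notin T$. The summand $\omega_\emptyset$ coincides with $\iota_*\pi_*\omega$, so the $\Omega_{R,I}^n$-component of $\omega$ is $\sum_{T \neq \emptyset}\omega_T$, and it suffices to bring each such $\omega_T$ into the stated form. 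If $0 \in T$, the coefficient $i_0 \in I$ is already acceptable, and the remaining $ds_j$'s (for $j \notin T$) are replaced by differentials of units via part 2 of lemma \hyperref[lemstability]{\ref{lemstability}}, which writes each $s_j = u_j + v_j$ with $u_j, v_j \in S^*$ so that $ds_j = du_j + dv_j$. If $0 \notin T$, pick any $k \in T$ (which exists since $T \neq \emptyset$) and apply the Leibniz identity
\[s_0\,di_k = d(s_0 i_k) - i_k\,ds_0,\]
which replaces the factor $s_0 \cdot di_k$ in $\omega_T$ by two terms: one with coefficient $1$ and a new differential of the element $s_0 i_k \in I$, and one with coefficient $-i_k \in I$ and a new differential $ds_0$. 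Applying the sum-of-two-units substitution to any remaining $S$-valued entries then yields the required form.

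For part 2, I would prove $d\Omega_{R/\ZZ}^{n-1} \cap \Omega_{R,I}^n \subseteq d\Omega_{R,I}^{n-1}$; the reverse inclusion is immediate. Let $\alpha = d\beta \in \Omega_{R,I}^n$ with $\beta \in \Omega_{R/\ZZ}^{n-1}$, and set $\beta' := \beta - \iota_*\pi_*\beta$, which lies in $\Omega_{R,I}^{n-1}$ by the direct sum decomposition. Since $d$ is natural with respect to the ring homomorphisms $\iota$ and $\pi$, we have $d\pi_*\beta = \pi_*d\beta = \pi_*\alpha = 0$, whence $d(\iota_*\pi_*\beta) = \iota_*d\pi_*\beta = 0$ and therefore $d\beta = d\beta'$. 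Thus $\alpha \in d\Omega_{R,I}^{n-1}$, giving the claimed equality of intersections and the injection $\Omega_{R,I}^n/d\Omega_{R,I}^{n-1} \hookrightarrow \Omega_{R/\ZZ}^n/d\Omega_{R/\ZZ}^{n-1}$ of quotient groups.

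The most delicate step is the bookkeeping in part 1: after applying the Leibniz identity and the sum-of-two-units substitution, one must verify that every resulting summand has its coefficient in $\{1\} \cup I$, its $d\bar{r}$-entries in $I$, and its $d\bar{r}'$-entries in $R^*$; signs produced by reordering are irrelevant because the alternating relation is already available in $\Omega_{R/\ZZ}^\bullet$. Part 2 is a formal consequence of the splitting together with the naturality of $d$, and presents no real obstacle.
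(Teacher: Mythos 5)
Your proof is correct. For part 1 you follow essentially the same route as the paper: decompose each generator additively via the splitting $R=S\oplus I$, identify the $S$-part as $\iota_*\pi_*\omega$, and clean up the remaining summands with the sum-of-two-units consequence of lemma \hyperref[lemstability]{\ref{lemstability}}. Your explicit Leibniz identity $s_0\,di_k=d(s_0 i_k)-i_k\,ds_0$ is in fact a welcome refinement: the paper instead appeals to permuting the coefficient into a differential slot ``up to sign, using exactness,'' which is literally valid only modulo exact forms, whereas part 1 is a statement about $\Omega_{R,I}^n$ itself; your identity shows the exact correction term $d(s_0i_k)\wedge(\cdots)$ is itself of the required shape, so the conclusion holds on the nose. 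For part 2 you take a genuinely different and cleaner route. The paper re-expresses $\omega\in d\Omega_R^{n-1}\cap\Omega_{R,I}^n$ using part 1 and asserts that the coefficient in each summand ``may be taken to be $1$,'' a step that is not really justified summand by summand. You instead use the idempotent $\iota_*\pi_*$ together with naturality of $d$ under $\pi$ and $\iota$ to replace any primitive $\beta$ by $\beta-\iota_*\pi_*\beta\in\Omega_{R,I}^{n-1}$ without changing $d\beta$; this is shorter, does not depend on part 1 at all, and avoids the hand-waving. The only thing the paper's version of part 2 buys is that it stays entirely within the generators-and-relations bookkeeping set up in section \hyperref[subsectiongeneratorsKahler]{\ref{subsectiongeneratorsKahler}}; your functorial argument is the one I would keep.
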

\begin{proof}  For the first part of the lemma, note that the elements $r_0,...,r_n$ contributing to a differential $r_0dr_1\wedge...\wedge dr_n$ may be permuted up to sign, using exactness and the alternating property of the exterior product.  Also note that by lemma \hyperref[lemstability]{\ref{lemstability}}, any element of $R$ may be written as a sum of two units.  It therefore suffices to show that $\Omega_{R,I}^n$ is generated by differentials of the form $r'd\bar{r}'$, where either $r'$ or at least one entry of $\bar{r}'$ belongs to $I$.  By the splitting $R=S\oplus I$, any element $r$ of $R^*$ may be written uniquely as a sum $s+i$, where $s$ belongs to $S$ and $i$ belongs to $I$.  Hence, the differential $rd\bar{r}$ may be decomposed into a sum 
\begin{equation}\label{decompositionrdbarr}rd\bar{r}=sd\bar{s}+\sum_lr'_ld\bar{r}_l',\end{equation}
where $s$ and each entry of $\bar{s}$ belong to $S$, and where for each $l$, either $r'_l$ or at least one entry of $d\bar{r}_l'$ belongs to $I$.  For example, the differential $r_0dr_1\wedge dr_2$ decomposes as follows:
\begin{equation*}\begin{array}{lcl} r_0dr_1\wedge dr_2&=&s_0ds_1\wedge ds_2+s_0ds_1\wedge di_2+s_0di_1\wedge ds_2+s_0di_1\wedge di_2\\&+&i_0ds_1\wedge ds_2+i_0ds_1\wedge di_2+i_0di_1\wedge ds_2+i_0di_1\wedge di_2.\end{array}\end{equation*}
In general, there are $2^{n+1}-1$ factors of the form $d\bar{r}_l'$ in equation \hyperref[decompositionrdbarr]{\ref{decompositionrdbarr}}.   Under the map $\Omega_{R}^n\rightarrow\Omega_{S}^n$ induced by the canonical surjection $R\rightarrow S$, the summands $r'_ld\bar{r}_l'$ all map to zero, so $rd\bar{r}$ maps to $sd\bar{s}$.   Hence, $rd\bar{r}$ belongs to the kernel $\tn{Ker}[\Omega_{R}^n\rightarrow\Omega_{S}^n]=\Omega_{R,I}^n$ if and only if $sd\bar{s}=0$.   Thus, each element $rd\bar{r}$ of $\Omega_{R,I}^n$ admits a decomposition $rd\bar{r}=\sum_lr'_ld\bar{r}_l'$, where or each $l$, either $r'_l$ or at least one entry of $d\bar{r}_l'$ belongs to $I$.  

For the second part of the lemma, the inclusion $d\Omega_{R,I}^{n-1}\subset d\Omega_{R}^{n-1}\cap \Omega_{R,I}^{n}$ is obvious, independent of the fact that $R$ is a split nilpotent extension of $S$.  Conversely, suppose that $\omega$ belongs to the intersection $d\Omega_{R}^{n-1}\cap \Omega_{R,I}^{n}$.    Since $\omega\in \Omega_{R,I}^{n}$, the first part of the lemma implies that $\omega$ may be expressed as a sum of differentials of the form $rd\bar{r}=r_0dr_1\wedge...\wedge dr_n$, where either $r_0$ or at least one of the $r_j$ belongs to $I$.  Since $\omega\in d\Omega_{R}^{n-1}$, the ``coefficient" $r_0$ in each summand may be taken to be $1$.  Hence, $\omega$ is a sum of terms of the form $dr_1\wedge...\wedge dr_n=d\big(r_1dr_2\wedge...\wedge dr_n\big)$, where one of the elements $r_1,...,r_n$ belongs to $I$, so $\omega\in d\Omega_{R,I}^{n-1}$ by the first part of the lemma.   It follows that the map sending $\omega+d\Omega_{R,I}^{n-1}$ to $\omega+d\Omega_{R}^{n-1}$ is an injective group homomorphism from $\Omega_{R,I}^{n}/d\Omega_{R,I}^{n-1}\rightarrow\Omega_{R}^{n}/d\Omega_{R}^{n-1}$, which may be regarded as an inclusion map.  

\end{proof}


\subsection{The $d\log$ Map; the de Rham-Witt Viewpoint}\label{subsectiondlogdeRhamWitt}

The following lemma establishes the existence of the ``canonical $d\log$ map" from Milnor $K$-theory to the absolute K\"{a}hler differentials, used in the proof of lemma \hyperref[lempatchingphi]{\ref{lempatchingphi}} below.\\

\begin{lem}\label{lemdlog} Let $R$ be a commutative ring.  The map $R^*\rightarrow\Omega_{R/\ZZ}^1$ sending $r$ to $d\log(r)=dr/r$ extends to a homomorphism $d\log:T_{R^*/\mathbb{Z}}\rightarrow \Omega_{R/\ZZ}^\bullet$ of graded rings, by sending sums to sums and tensor products to exterior products.   This homomorphism induces a homomorphism of graded rings:
\[d\log:K_\bullet^{\tn{\fsz{M}}}(R)\longrightarrow \Omega_{R/\ZZ}^\bullet\]
\begin{equation}\label{equdlog}\{r_0,...,r_n\}\mapsto\displaystyle\frac{dr_0}{r_0}\wedge...\wedge\frac{dr_n}{r_n}.\end{equation}
\end{lem}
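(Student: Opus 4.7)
The plan is to build the $d\log$ map in three stages via the universal property of the tensor algebra. First I would verify that $r \mapsto dr/r$ is a group homomorphism from the multiplicative group $R^*$ to the additive group $\Omega_{R/\ZZ}^1$. This is immediate from the Leibniz rule:
\[
\frac{d(rr')}{rr'} = \frac{r\,dr' + r'\,dr}{rr'} = \frac{dr}{r} + \frac{dr'}{r'},
\]
which is precisely the statement that the assignment is $\ZZ$-linear when $R^*$ is regarded as a $\ZZ$-module via its multiplication (with inverses provided by $d\log(r^{-1}) = -dr/r$).

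Second, by the universal property of the tensor algebra $T_{R^*/\ZZ}$, this $\ZZ$-linear map into the $\ZZ$-algebra $\Omega_{R/\ZZ}^\bullet$ extends uniquely to a homomorphism of graded $\ZZ$-algebras $T_{R^*/\ZZ} \to \Omega_{R/\ZZ}^\bullet$ sending the pure tensor $r_0 \otimes \cdots \otimes r_n$ to $(dr_0/r_0) \wedge \cdots \wedge (dr_n/r_n)$ and sums to sums. Grading is preserved automatically because the generators in $T_{R^*/\ZZ}^1 = R^*$ land in $\Omega_{R/\ZZ}^1$.

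Third, to descend to $K_\bullet^{\tn{\fsz{M}}}(R) = T_{R^*/\ZZ}/I_{\tn{\fsz{St}}}$, I need the Steinberg ideal to be annihilated. Because $I_{\tn{\fsz{St}}}$ is the two-sided ideal generated by elements $r \otimes (1-r)$ with $r, 1-r \in R^*$, and because the map already constructed is a ring homomorphism, it is enough to check vanishing on these generators. A single computation suffices:
\[
\frac{dr}{r} \wedge \frac{d(1-r)}{1-r} = \frac{-dr \wedge dr}{r(1-r)} = 0,
\]
using $d(1-r) = -dr$ and the alternating property $dr \wedge dr = 0$ of the exterior algebra. Every other element of $I_{\tn{\fsz{St}}}$ is a $\ZZ$-linear combination of tensors of the form $\alpha \otimes r \otimes (1-r) \otimes \beta$, whose image contains the factor $d\log(r) \wedge d\log(1-r) = 0$, hence maps to zero.

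There is no real obstacle in this argument; it is the standard universal-property recipe, and the only substantive computation — the vanishing of $d\log(r) \wedge d\log(1-r)$ — reduces immediately to $dr \wedge dr = 0$. The content of the lemma lies less in its proof than in the clean formula (\ref{equdlog}) it records for computing $d\log$ on Steinberg symbols, which is what will be needed later in Lemma \ref{lempatchingphi}.
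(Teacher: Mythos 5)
Your proposal is correct and follows essentially the same route as the paper: extend $r\mapsto dr/r$ to the tensor algebra and kill the Steinberg ideal via the computation $d\log(r)\wedge d\log(1-r) = -\frac{1}{r(1-r)}\,dr\wedge dr = 0$. The only difference is that you spell out the $\ZZ$-linearity of $r\mapsto dr/r$ via the Leibniz rule and invoke the universal property explicitly, steps the paper compresses into ``a graded ring homomorphism by construction''; this is a harmless (indeed slightly more careful) elaboration of the same argument.
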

\begin{proof} The map $d\log:T_{R^*/\mathbb{Z}}\rightarrow \Omega_{R/\ZZ}^*$ is a graded ring homomorphism by construction, since its definition stipulates that sums are sent to sums and tensor products to exterior products.  Elements of the form $r\otimes(1-r)$ in $R*\otimes_\ZZ R^*$ map to zero in $\Omega_{R/\ZZ}^2$ by the alternating property of the exterior product:
\[d\log\big(r\otimes(1-r)\big)=\frac{dr}{r}\wedge\frac{d(1-r)}{1-r}=-\frac{1}{r(1-r)}dr\wedge dr=0,\]
so $d\log$ descends to a homomorphism $K_\bullet^{\tn{\fsz{M}}}(R)\longrightarrow \Omega_{R/\ZZ}^\bullet$. 
\end{proof}

Lars Hesselholt \cite{HesselholtBigdeRhamWitt} provides a more sophisticated viewpoint regarding the $d\log$ map and the closely related map $\phi_{n+1}$ in the main theorem, expressed in equation \hyperref[equationphi]{\ref{equationphi}} above.  This viewpoint is expressed in terms of pro-abelian groups, de Rham Witt complexes, and Frobenius endomorphisms.   In describing it, I closely paraphrase a private communication \cite{Hesselholtprivate14} from Hesselholt.  For every commutative ring $R$, there exists a map of pro-abelian groups
\[d\log: K_n^{\tn{\fsz{M}}}(R)\rightarrow W\Omega_{R/\ZZ}^n\]
from Milnor $K$-theory to an appropriate de Rham-Witt theory $W\Omega_{R/\ZZ}^n$, taking the Steinberg symbol $\{r_1,...,r_n\}$ to the element $d\log[r_1]...d\log[r_n]$, where $[r]$ is the Teichm\"{u}ller representative of $r$ in an appropriate ring of Witt vectors of $R$.   Here, $W\Omega_{R/\ZZ}^n$ may represent either the $p$-typical de Rham-Witt groups or the big Rham-Witt groups.  On the $p$-typical de Rham-Witt complex, there is a (divided) Frobenius endomorphism $F=F_p$; on the big de Rham-Witt complex, there is a (divided) Frobenius endomorphism $F_n$ for every positive integer $n$.  The map $d\log$ maps into the sub-pro-abelian group 
\[(W\Omega_{R/\ZZ}^n)^{F=\tn{Id}}\subset W\Omega_{R/\ZZ}^n,\]
fixed by the appropriate Frobenius endomorphism or endomorphisms.  Using the big de Rham complex, one may conjecture\footnotemark\footnotetext{This is Hesselholt's idea.} that for every commutative ring $R$ and every nilpotent ideal $I\subset R$, the induced map of relative groups
\[K_n^{\tn{\fsz{M}}}(R,I)\rightarrow(W\Omega_{R,I}^n)^{F=\tn{Id}},\]
is an isomorphism of pro-abelian groups.  Expressing the right-hand-side in terms of differentials, as in the main theorem \hyperref[equmaintheorem]{\ref{equmaintheorem}}, likely requires some additional hypotheses.\footnotemark\footnotetext{Hesselholt  \cite{Hesselholtprivate14} writes, {\it ``If every prime number $l$ different from a fixed prime number $p$ is invertible in $R$, then one should be able to use the $p$-typical de Rham-Witt groups instead of the big de Rham-Witt groups. In this context, [the main theorem] can be seen as a calculation of this Frobenius fixed set... ... In order to be able to express the Frobenius fixed set in terms of differentials (as opposed to de Rham-Witt differentials), I would think that it is necessary to invert $N$... ...I do not think that inverting 2 is enough.}}


\section{Proof of the Theorem}\label{sectionproof}

\subsection{Strategy of Proof}\label{subsectionstrategy}

The proof of the theorem is by induction on $n$ in the statement $\displaystyle K_{n+1} ^{\tn{\fsz{M}}}(R,I)\cong \Omega_{R,I} ^n/d\Omega_{R,I} ^{n-1}$.   The base case of the theorem ($n=1$) is provided by combining the theorems \hyperref[theoremvanderkallen]{\ref{theoremvanderkallen}} (Van der Kallen), \hyperref[theoremmaazen]{\ref{theoremmaazen}} (Maazen and Stienstra), and  \hyperref[theorembloch]{\ref{theorembloch}} (Bloch), introduced in section \hyperref[sectionprelim]{\ref{sectionprelim}} above.   This synthesis is discussed in detail in section \hyperref[subsectionbasecase]{\ref{subsectionbasecase}} below.  The induction hypothesis assumes the existence of isomorphisms  $\phi_{m+1}:\displaystyle K_{m+1} ^{\tn{\fsz{M}}}(R,I)\rightarrow\Omega_{R,I} ^m/d\Omega_{R,I} ^{m-1}$ and $\psi_{m+1}:\displaystyle \Omega_{R,I} ^m/d\Omega_{R,I} ^{m-1}\rightarrow K_{m+1} ^{\tn{\fsz{M}}}(R,I)$ for $1\le m<n$, satisfying conditions made explicit in section \hyperref[subsectionexplicitinduction]{\ref{subsectionexplicitinduction}}.   These isomorphisms are then used to construct corresponding isomorphisms $\phi_{n+1}$ and $\psi_{n+1}$ in sections \hyperref[subsectionanalysisphi]{\ref{subsectionanalysisphi}} and \hyperref[subsectionanalysispsi]{\ref{subsectionanalysispsi}}.  

As illustrated by equations \hyperref[equationphi]{\ref{equationphi}} and \hyperref[equationpsi]{\ref{equationpsi}} in section \hyperref[sectionintroduction]{\ref{sectionintroduction}} above, it is easy to specify the images of certain special generators of $K_{n+1} ^{\tn{\fsz{M}}}(R,I)$ and $\Omega_{R,I} ^n/d\Omega_{R,I} ^{n-1}$ under $\phi_{n+1}$ and $\psi_{n+1}$.  The whole ``difficulty" of the proof is in verifying that these formulae actually extend to well-defined isomorphisms.  Induction allows this problem to be split into two parts: first, to show that the maps $\phi_{m+1}$ and $\psi_{m+1}$ for $1\le m<n$ are well-defined isomorphisms; second, that these maps give rise to well-defined isomorphisms $\phi_{n+1}$ and $\psi_{n+1}$.  The first part ``comes for free," via the base case of the theorem and the induction hypothesis.  The second part consists of ``patching together" $(n+1)$ maps $\Phi_{n+1,j}$ and $\Psi_{n+1,j}$, defined, roughly speaking, by applying $\phi_n$ and $\psi_n$ to Steinberg symbols and  differentials ``of size $n$,"  given by omitting individual entries of corresponding symbols and differentials ``of size $n+1$."  The maps $\Phi_{n+1,j}$ and $\Psi_{n+1,j}$ are introduced in definitions \hyperref[defiPhinj]{\ref{defiPhinj}} and \hyperref[defiPsinplusonej]{\ref{defiPsinplusonej}}, respectively.  The ``patching lemmas" \hyperref[lempatchingphi]{\ref{lempatchingphi}} and \hyperref[lempatchingpsi]{\ref{lempatchingpsi}} are the most computationally involved parts of the proof. 

The obvious question raised by this approach is, ``why not just define the images of sets of generators of  $\displaystyle K_{n+1} ^{\tn{\fsz{M}}}(R,I)$ and $\Omega_{R,I} ^n/d\Omega_{R,I} ^{n-1}$, respectively, show that they satisfy the proper relations in the target, etc, instead of pursuing an elaborate induction and patching scheme?"  There may well be some clever way to do this, and to convince oneself that all the necessary conditions have been checked, but it is not a straightforward procedure.   The reason why is that relations among ``convenient" generators for $\displaystyle K_{n+1} ^{\tn{\fsz{M}}}(R,I)$ generally do not translate easily into relations among ``convenient" generators for $\Omega_{R,I} ^n/d\Omega_{R,I} ^{n-1}$, and vice versa.  For example, consider the Leibniz rule in lemma \hyperref[subsectiongeneratorsKahler]{\ref{subsectiongeneratorsKahler}}:
\[rd(r'r'')\wedge d\bar{r}=rr'dr''\wedge d\bar{r}+rr''dr'\wedge d\bar{r},\]
and suppose that we want to verify that 
\begin{equation}\label{equleibnizcomplication}\psi_{n+1}\big(rd(r'r'')\wedge d\bar{r}\big)=\psi_{n+1}\big(rr'dr''\wedge d\bar{r}\big)\psi_{n+1}\big(rr''dr'\wedge d\bar{r}\big).\end{equation}
Assume for simplicity that the element $r$ and the product $r'r''$ both belong to $I$, and that every entry of the $(n-1)$-tuple $(\bar{r})$ is an element of $R^*$; the example will raise sufficient subtleties for illustrative purposes even in this special case.  Equation  \hyperref[equationpsi]{\ref{equationpsi}} specifies the image in $\displaystyle K_{n+1} ^{\tn{\fsz{M}}}(R,I)$ of the left-hand side of equation \hyperref[equleibnizcomplication]{\ref{equleibnizcomplication}}:
\[\psi_{n+1}\big(rd(r'r'')\wedge d\bar{r}\big)=\{e^{r\Pi},e^{r'r''},\bar{r}\},\]
where $\Pi$ is the product of the entries of $\bar{r}$.   However, it is not straightforward to write out the right-hand side of equation \hyperref[equleibnizcomplication]{\ref{equleibnizcomplication}} explicitly.   In particular, the factors $r'$ and $r''$ of the product $r'r''$ may both belong to $I$, or only one may belong to $I$, or neither may belong to $I$, if $I$ is not prime.  Suppose for simplicity that $r'$ belongs to $I$ but $r''$ does not.  Then the second factor on the right-hand side of equation \hyperref[equleibnizcomplication]{\ref{equleibnizcomplication}} is 
\[\psi_{n+1}\big(rr''dr'\wedge d\bar{r}\big)=\{e^{rr''\Pi},e^{r'},\bar{r}\},\]
but the explicit form of the first factor cannot be read off from equation \hyperref[equationpsi]{\ref{equationpsi}}, since $r''$ is neither a unit nor an element of $I$.  Instead, one must use lemma \hyperref[lemstability]{\ref{lemstability}} to write $r''$ as a sum of two units $r''=u+v$.  Then the right-hand side of equation \hyperref[equleibnizcomplication]{\ref{equleibnizcomplication}} may be written out explicitly:
\[\psi_{n+1}\big(rd(r'r'')\wedge d\bar{r}\big)=\{e^{rr'u\Pi},u,\bar{r}\}\{e^{rr'v\Pi},v,\bar{r}\}\{e^{rr''\Pi},e^{r'},\bar{r}\}.\]
However, one still must show that the right-hand side does not depend on the choice of $u$ and $v$, since $r''$ can generally be written as a sum of two units in many different ways.\footnotemark\footnotetext{See lemma \hyperref[lemuvUV]{\ref{lemuvUV}} below.}  This example should serve to convince the reader that a na\"{i}ve, straightforward approach to the proof involves many cases and loose ends.  The induction approach I use instead has the advantage of being more systematic.  Most of the work involved in checking relations is shunted off on the induction hypothesis, with the tradeoff that one must endure a bit of computation to show that the maps $\Phi_{n+1,j}$ and $\Psi_{n+1,j}$ really patch together as claimed.


\subsection{Base Case of the Theorem}\label{subsectionbasecase}

Combining several of the preliminary results in section \hyperref[sectionprelim]{\ref{sectionprelim}} yields the following lemma, which serves as the base case of the theorem:\\

\begin{lem}\label{lembasecase} Suppose $R$ is a split nilpotent extension of a $5$-fold stable ring $S$, with extension ideal $I$ 
whose index of nilpotency is $N$.  Suppose further that every positive integer less than or equal to $N$
 is invertible in $S$.  Then
\begin{equation}K_{2} ^{\tn{\fsz{M}}}(R,I)\cong \frac{\Omega_{R,I} ^1}{dI}\end{equation}
\end{lem}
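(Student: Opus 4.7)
The plan is to chain together the three preliminary theorems \ref{theoremvanderkallen}, \ref{theoremmaazen}, and \ref{theorembloch} to establish the base case, with the slight subtlety that Bloch's theorem gives information about full relative $K_2$, while we need information about relative \emph{Milnor} $K_2$.

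First, I would invoke part 1 of Lemma \ref{lemstability} to conclude that $R$ itself is $5$-fold stable, since $S$ is. Van der Kallen's Theorem \ref{theoremvanderkallen} then applies both to $R$ and to $S$, producing natural isomorphisms $K_2^{\tn{\fsz{M}}}(R)\cong D_2(R)$ and $K_2^{\tn{\fsz{M}}}(S)\cong D_2(S)$. Because the isomorphisms are induced by well-defined maps of Steinberg symbols to Dennis--Stein symbols, they are compatible with the functoriality of both constructions, hence with the split surjection $R\to S$ and its section $S\to R$. Passing to the kernels of the induced maps $K_2^{\tn{\fsz{M}}}(R)\to K_2^{\tn{\fsz{M}}}(S)$ and $D_2(R)\to D_2(S)$, which (because the extensions are split) compute the relative groups $K_2^{\tn{\fsz{M}}}(R,I)$ and $D_2(R,I)$ as discussed at the end of Section \ref{subsectionsymbolicstability}, yields an isomorphism
\[
K_2^{\tn{\fsz{M}}}(R,I)\;\cong\;D_2(R,I).
\]

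Second, I would apply Maazen and Stienstra's Theorem \ref{theoremmaazen}, noting that a split nilpotent extension is a special case of a split radical extension. This gives $K_2(R,I)\cong D_2(R,I)$. Combining with the previous step produces an isomorphism $K_2^{\tn{\fsz{M}}}(R,I)\cong K_2(R,I)$; that is, under these stability and invertibility hypotheses, the symbolic (Milnor) and full versions of relative $K_2$ agree.

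Finally, I would invoke Bloch's Theorem \ref{theorembloch}, whose hypotheses coincide exactly with those assumed in the statement of the lemma (index of nilpotency $N$ with all positive integers up to $N$ invertible in $S$), to identify $K_2(R,I)\cong \Omega_{R,I}^1/dI$. Chaining the three isomorphisms yields the desired conclusion. The main potential obstacle is verifying that the Van der Kallen isomorphism is sufficiently natural to descend to the relative groups, but this is straightforward since both $K_2^{\tn{\fsz{M}}}$ and $D_2$ are defined by symbols compatible with ring homomorphisms and the extension is split, so kernels are preserved. No new computation is required; the lemma is essentially a direct synthesis.
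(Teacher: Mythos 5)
Your proposal is correct and follows essentially the same route as the paper's own proof: apply Lemma \ref{lemstability} to transfer $5$-fold stability to $R$, use Theorem \ref{theoremvanderkallen} to identify $K_2^{\tn{\fsz{M}}}(R,I)$ with $D_2(R,I)$, Theorem \ref{theoremmaazen} to identify the latter with $K_2(R,I)$, and Theorem \ref{theorembloch} to conclude. Your extra remark on the naturality of Van der Kallen's isomorphism with respect to the split surjection is a point the paper leaves implicit, but it does not change the argument.
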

\begin{proof}$R$ is $5$-fold stable by lemma \hyperref[lemstability]{\ref{lemstability}}, so $K_2 ^{\tn{\fsz{M}}}(R)\cong D_2(R)$ by theorem \hyperref[theoremvanderkallen]{\ref{theoremvanderkallen}}. $R$ is a split radical extension of $S$ since $I$ is nilpotent,  so $K_2 ^{\tn{\fsz{M}}}(R,I)\cong D_2(R,I)\cong K_2(R,I)$ by theorem \hyperref[theoremmaazen]{\ref{theoremmaazen}}.  Finally, since  every positive integer less than or equal to $N$ is invertible in $S$, $K_2(R,I)\cong \Omega_{R,I} ^1/dI$ by theorem \hyperref[theorembloch]{\ref{theorembloch}}.
\end{proof}

In terms of Steinberg symbols and K\"{a}hler differentials, the isomorphisms of lemma \hyperref[lembasecase]{\ref{lembasecase}} are the maps 

\[\phi_{2}:K_{2} ^{\tn{\fsz{M}}}(R,I)\longrightarrow\frac{\Omega_{R,I} ^1}{dI}\]
\begin{equation}\label{equationphi1}
\{r_0,r_1\}\mapsto\log(r_0)\frac{dr_1}{r_1},
\end{equation}
where $r_0\in (1+I)^*$ and $r_1\in R^*$, and 
\[\psi_{2}:\frac{\Omega_{R,I} ^1}{dI}\longrightarrow K_{2} ^{\tn{\fsz{M}}}(R,I)\]
\begin{equation}\label{equationpsi1}
r_0dr_1\mapsto\{e^{r_0r_1},r_{1}\},
\end{equation}
where $r_0\in I$ and $r_{1}\in R^*$.  These maps are given by setting $n=1$ in equations \hyperref[equationphi]{\ref{equationphi}} and \hyperref[equationpsi]{\ref{equationpsi}} of section \hyperref[sectionintroduction]{\ref{sectionintroduction}}.  I will now describe in more detail how they arise.  As described in the proof of lemma \hyperref[lembasecase]{\ref{lembasecase}}, $\phi_2$ may be viewed as a composition of isomorphisms
\[K_{2} ^{\tn{\fsz{M}}}(R,I)\rightarrow D_2(R,I)\rightarrow  \frac{\Omega_{R,I} ^1}{dI}.\]
The first isomorphism is given, in the $5$-fold stable case, by restricting the isomorphism $K_{2} ^{\tn{\fsz{M}}}(R)\cong D_2(R)$ of lemma \hyperref[subsectionsymbolicstability]{\ref{subsectionsymbolicstability}} to the subgroup $K_{2} ^{\tn{\fsz{M}}}(R,I)$. This isomorphism is described explicitly by Van der Kallen \cite{VanderKallenRingswithManyUnits77} theorem 8.4, page 509, as the map taking the Steinberg symbol $\{r_0,r_1\}$ to the Dennis-Stein symbol $\langle(r_0-1)/r_1,r_1\rangle$.   Restricting to $K_{2} ^{\tn{\fsz{M}}}(R,I)$, one may assume that at least one of the entries $r_0$ and $r_1$ of $\{r_0,r_1\}$ belongs to $(1+I)^*$.  By anticommutativity, one may assume that $r_0\in(1+I)^*$.  The second isomorphism is described explicitly by Maazen and Stienstra \cite{MaazenStienstra77} section 3.12, pages 287-289, as the map taking the Dennis-Stein symbol $\langle a,b\rangle$ to the differential $\log(1+ab)(db/b)$.\footnotemark\footnotetext{Actually, Maazen and Stienstra give the image as $\log(1+ab)(da/a)$, but the Dennis-Stein relation $\langle a,b\rangle\langle -b,-a\rangle=1$ in definition \hyperref[defidennisstein]{\ref{defidennisstein}} implies that the definition I use here is equivalent.} This definition makes sense whether or not $b$ is invertible, since every term in the power series expansion of $\log(1+ab)$ is divisible by $b$.  Putting the two maps together, 
\[\phi_2\big(\{r_0,r_1\}\big)= \log\Big(1+\frac{r_0-1}{r_1}r_1\Big)\frac{dr_1}{r_1}=\log(1+r_0)\frac{dr_1}{r_1},\]
as stated in equation \hyperref[equationphi1]{\ref{equationphi1}}.  

Similarly, $\psi_2$ may be viewed as a composition of isomorphisms in the opposite direction: 
\[\frac{\Omega_{R,I} ^1}{dI}\rightarrow D_2(R,I)\rightarrow  K_{2} ^{\tn{\fsz{M}}}(R,I).\]
The first isomorphism is described explicitly by Maazen and Stienstra \cite{MaazenStienstra77} section 3.12, pages 287-289, as the map taking the differential $r_0dr_1$ to the Dennis-Stein symbol $\langle (e^{r_0r_1}-1)/r_1,r_1\rangle$.\footnotemark\footnotetext{Maazen and Stienstra give the image as $\langle (e^{r_0r_1}-1)/r_0,r_0\rangle$, but the Leibniz rule $d(r_0r_1)=r_0dr_1+r_1dr_0$, together with exactness, proves that the definition I use here is equivalent.}  Restricting to $D_2(R,I)$, one may assume that at least one of the elements $r_1$ and $r_2$ belongs to $I$.   By exactness and the Leibniz rule, it suffices to describe the images of differentials of the form $idr$.  Such a differential maps to $\langle (e^{ir}-1)/r,r\rangle$.  The second isomorphism is given, in the $5$-fold stable case, by restricting the isomorphism $D_2(R)\rightarrow K_{2} ^{\tn{\fsz{M}}}(R)$ to the subgroup $D_2(R,I)$.   This isomorphism is described explicitly by Van der Kallen \cite{VanderKallenRingswithManyUnits77} theorem 8.4, page 509, as the map taking the Dennis-Stein symbol $\langle a,b\rangle$ to the Steinberg symbol $\{1+ab,b\}$.  Putting the two maps together, 
\[\psi_2(r_0dr_1)=\Big\{1+\frac{e^{r_0r_1}-1}{r_1}r_1,r_1\Big\} =\{e^{r_0r_1},r_1\},\]
as stated in equation \hyperref[equationpsi1]{\ref{equationpsi1}}.


\subsection{Induction Hypothesis}\label{subsectionexplicitinduction}

Let $n$ be a positive integer.  The induction hypothesis states that for any positive integer $m$ less than $n$, there exists an isomorphism
\[\phi_{m+1}:K_{m+1} ^{\tn{\fsz{M}}}(R,I)\rightarrow \frac{\Omega_{R,I} ^m}{d\Omega_{R,I} ^{m-1}}\]
\begin{equation}\label{equinductionphi}\{r,\bar{r}\}\mapsto\log(r)\frac{d\bar{r}}{\Pi},\end{equation}
for $r\in (1+I)^*$ and $\bar{r}\in R^*$, with inverse
\[\psi_{m+1}:\frac{\Omega_{R,I} ^m}{d\Omega_{R,I} ^{m-1}}\rightarrow K_{m+1} ^{\tn{\fsz{M}}}(R,I)\]
\begin{equation}\label{equinductionpsi}rd\bar{r}\wedge d\bar{r}'\mapsto\{e^{r\Pi'},e^{\bar{r}},\bar{r}'\},\end{equation}
for $r,\bar{r}\in I$ and $\bar{r}'\in R^*$.   Remaining is the inductive step: to show that the induction hypothesis implies the existence of such isomorphisms for $m=n$.

\subsection{Definition and Analysis of the Map $\displaystyle\phi_{n+1}: K_{n+1} ^{\tn{\fsz{M}}}(R,I)\rightarrow \Omega_{R,I} ^n/d\Omega_{R,I} ^{n-1}$}\label{subsectionanalysisphi}

I will define $\phi_{n+1}$ in several steps, building up its properties in the process.\\
\begin{defi}\label{defprelimmapsphi} Let $m$ be a positive integer.
\begin{enumerate}
\item Let $F_{m+1}(R^*)$ be the free abelian group generated by ordered $(m+1)$-tuples $(r_0,...,r_m)$ of elements of $R^*$.  
\item Let $q_{m+1}$ be the quotient homomorphism
\[q_{m+1}: F_{m+1}(R^*)\rightarrow K_{m+1} ^{\tn{\fsz{M}}}(R)\]
\[(r_0,...,r_m)\mapsto\{r_0,...,r_m\},\]
sending each $(m+1)$-tuple to the corresponding Steinberg symbol.  
\item Let $F_{m+1}\big(R^*,(1+I)^*\big)$ be the preimage in $F_{m+1}(R^*)$ of the relative Milnor $K$-group $K_{m+1}^{\tn{\fsz{M}}}(R,I)\subset K_{m+1}^{\tn{\fsz{M}}}(R)$ under $q_{m+1}$.  
\item For $1\le m<n$, let $\Phi_{m+1}$ be the composition 
\[\phi_{m+1}\circ q_{m+1}:F_{m+1}\big(R^*,(1+I)^*\big)\rightarrow\frac{\Omega_{R,I} ^{m}}{d\Omega_{R,I} ^{m-1}}.\]
\end{enumerate}
\end{defi}

An $(m+1)$-tuple $(r_0,...,r_m)$ of elements of $R^*$ satisfying the condition that at least one of its entries belongs to $(1+I)^*$ is automatically an element of $F_{m+1}\big(R^*,(1+I)^*\big)$.  If $1\le m<n$, then specifying such an element $r_j$ allows the image of $(r_0,...,r_m)$ under $\Phi_m$ to be expressed explicitly in terms of logarithms and their differentials, via equation \hyperref[equinductionphi]{\ref{equinductionphi}} above. In particular, if $r_0\in(1+I)^*$, then
\[\Phi_{m+1}(r_0,...,r_m)=\log(r_0)\frac{dr_1}{r_1}\wedge...\wedge\frac{dr_{m}}{r_{m}}.\]
However, there are generally $(m+1)$-tuples belonging to $F_{m+1}\big(R^*,(1+I)^*\big)$ that do not satisfy this property.  For example, any $(m+1)$-tuple including an idempotent element as one of its entries maps to the trivial element of $K_{m+1}^{\tn{\fsz{M}}}(R)$ under $q_{m+1}$, and therefore belongs to $F_{m+1}\big(R^*,(1+I)^*\big)$, whether or not it includes an element of $(1+I)^*$.  

It will be useful to consider diagrams of the form 
\[\begin{CD}
K_{n+1} ^{\tn{\fsz{M}}}(R,I) @<q_{n+1}<< F_{n+1}\big(R^*,(1+I)^*\big)
@<A_j<<F_{n}\big(R^*,(1+I)^*\big)\times R^*
@>\beta>>\displaystyle\frac{\Omega_{R,I} ^{n-1}}{d\Omega_{R,I} ^{n-2}}\times R^*
@>{\gamma}>>\displaystyle\frac{\Omega_{R,I}^{n}}{d\Omega_{R,I}^{n-1}},
\end{CD}\]
where the maps $A_j$, $\beta$, and $\gamma$ are defined as follows:\\
\begin{defi}\label{defiAjbetagamma} Let $j$ be a nonnegative integer less than or equal to $n$. 
\begin{enumerate}
\item Let $A_j$ be the map converting each $n$-tuple $(\bar{r})$ in $F_{n}\big(R^*,(1+I)^*\big)$ to an $(n+1)$-tuple in $F_{n+1}\big(R^*,(1+I)^*\big)$ by inserting an element of $R^*$ between the $j-1$ and $j$th entries of $(\bar{r})$.  Extend $A_j$ to inverses and products of $n$-tuples to produce products of $(n+1)$-tuples and their inverses sharing the same $j$th entries.  For example, if $n=3$ and $j=1$, then 
\[A_1 \big((r_0,r_1,r_2)(r_0',r_1',r_2')^{-1}, r\big)=(r_0,r,r_1,r_2)(r_0',r,r_1',r_2')^{-1}.\] 
\item Let $\beta$ be the Cartesian product $\Phi_{n}\times \tn{Id}_{R^*}$, where $\tn{Id}_{R^*}$ is the identity map on $R^*$.
\item Let $\gamma$ be defined by wedging on the right with $\displaystyle\frac{dr}{r}$ for $r\in R^*$, sending $\big(\omega+d\Omega_{R,I} ^{n-2},r\big)$ to $\displaystyle\omega\wedge\frac{dr}{r}+d\Omega_{R,I} ^{n-1}$.
\end{enumerate}
\end{defi}
It is prudent to verify that these maps are well-defined.  For $q_{n+1}$ and $\beta$, this is obvious.  For $A_j$, the definition certainly produces an element of $F_{n+1}(R^*)$, and it remains to show that this element belongs to the subgroup $F_{n+1}\big(R^*,(1+I)^*\big)$.  To see this, note that inserting an element corresponds, after applying $q_{n+1}$ and anticommutativity, to the product $K_n^{\tn{\fsz{M}}}(R)\times K_1^{\tn{\fsz{M}}}(R)\rightarrow K_{n+1}^{\tn{\fsz{M}}}(R)$, which fits into the commutative square
\[\begin{CD}
K_n^{\tn{\fsz{M}}}(R)\times K_1^{\tn{\fsz{M}}}(R)@>>>K_{n+1}^{\tn{\fsz{M}}}(R)\\
@VVV@VVV\\
K_n^{\tn{\fsz{M}}}(S)\times K_1^{\tn{\fsz{M}}}(S)@>>>K_{n+1}^{\tn{\fsz{M}}}(S).
\end{CD}\]
This implies that a generator of $F_{n+1}(R^*)$ given by inserting a element of $R^*$ between the $j-1$ and $j$th entries of a generator of $F_n(R^*)$ maps to the identity in $K_{n+1}^{\tn{\fsz{M}}}(S)$ under $q_{n+1}$ if the original $n$-tuple maps to the identity in $K_{n}^{\tn{\fsz{M}}}(S)$ under $q_{n}$.\footnotemark\footnotetext{The converse is obviously false; for example, inserting $1$ into any generator of $F_n(R^*)$ produces an element of $F_{n+1}\big(R^*,(1+I)^*\big)$ by the idempotent lemma.}

Turning to $\gamma$, it is necessary to verify that the image $\omega\wedge(dr/r)+d\Omega_{R,I} ^{n-1}$ of the pair $\big(\omega+d\Omega_{R,I} ^{n-2},r\big)$ does not depend on the choice of $\omega$; i.e., that adding an exact differential to $\omega$ does not alter the image.  By lemma \hyperref[lemrelativekahlergenerators]{\ref{lemrelativekahlergenerators}}, this reduces to showing that $d\bar{r}\wedge(dr/r)$ belongs to $d\Omega_{R,I} ^{n-1}$ for any exact differential of the form $d\bar{r}=dr_0\wedge...\wedge dr_{n-2}$ with at least one $r_j$ belonging to $I$. But $d\bar{r}\wedge(dr/r)$ is just $d\big(\log(r)d\bar{r}\big)$ up to sign, and at least one of its factors is the differential of an element of $I$, by the choice of $\bar{r}$.   

A few other properties of the maps $A_j$  are noteworthy.  First, they respect the group structure of $F_n \big(R^*,(1+I)^*\big)$, but do not respect the group structure of $R^*$, since the target $F_{n+1}\big(R^*,(1+I)^*\big)$ has no relations except commutativity. However, the composite maps $q_{n+1}\circ A_j$ respect both group structures, due to the multiplicative relations in $K_{n+1} ^{\tn{\fsz{M}}}(R,I)$.  Second, each map $A_j$ is injective.  Indeed, an element of $F_{n+1}\big(R^*,(1+I)^*\big)$ can belong to the image of $A_j$ only if the $j$th entries of its factors coincide, in which case its inverse image in $F_{n}\big(R^*,(1+I)^*\big)\times R^*$, if it exists, is uniquely defined by extracting the common $j$th entry.   The inverse maps $A_j^{-1}$ are therefore well-defined on the images $\tn{Im}(A_j)$.  It is important to note that the maps $A_j^{-1}$ are maps on {\it products of $(n+1)$-tuples and their inverses}, rather than merely maps on $(n+1)$-tuples. For this reason, the image of a single $(n+1)$-tuple $(\bar{r})$ under $A_j^{-1}$ is written as $A_j^{-1}\big((\bar{r})\big)$, rather than $A_j^{-1}(\bar{r})$.\\

\begin{defi}\label{defiPhinj}For an $(n+1)$-tuple $(\bar{r})=(r_1,...,r_{n+1})$ belonging to $\tn{Im}(A_j)\subset F_{n+1}\big(R^*,(1+I)^*\big)$, define 
\[\Phi_{n+1,j}\big((\bar{r})\big):= \gamma\circ\beta\circ {A_j ^{-1}}\big((\bar{r})\big),\]
and extend $\Phi_{n+1,j}$ to products of $(n+1)$-tuples sharing the same $j$th entry by sending products in $F_{n+1}\big(R^*,(1+I)^*\big)$ to sums in $\Omega_{R,I}^{n}/d\Omega_{R,I}^{n-1}$. 
\end{defi}
The following ``patching lemma" enables the definition of the ``global map" $\Phi_{n+1}$ in definition \hyperref[defipatching]{\ref{defipatching}} below:\\ 


\begin{lem}\label{lempatchingphi} $\Phi_{n+1,j}=(-1)^{k-j}\Phi_{n+1,k}$ on the intersection $\tn{Im}(A_j)\cap \tn{Im}(A_k)\subset F_{n+1}\big(R^*,(1+I)^*\big)$ for any $1\le j<k\le n$.
\end{lem}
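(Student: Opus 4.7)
The plan is to reduce the identity to a single $(n+1)$-tuple with a convenient $(1+I)^*$-entry in a position distinct from both $j$ and $k$, and then perform a direct sign-tracking computation using the induction hypothesis together with the anticommutativity of Steinberg symbols (Lemma \ref{lemrelationsstable}) and the alternating property of the wedge product (Lemma \ref{lemKahlerrelations}). Both $\Phi_{n+1,j}$ and $\Phi_{n+1,k}$ are homomorphisms on their respective domains by construction, so it suffices to verify the equality on a set of generators of the intersection $\tn{Im}(A_j)\cap\tn{Im}(A_k)$. Using the splitting $R^*=S^*\cdot(1+I)^*$ in the style of Lemma \ref{lemrelativegenerators}, together with the constraint that both $A_j^{-1}((\bar{r}))$ and $A_k^{-1}((\bar{r}))$ must lie in $F_n\bigl(R^*,(1+I)^*\bigr)\times R^*$, I would argue that every element of the intersection is equivalent, modulo the kernel of $q_{n+1}$, to a product of single tuples $(\bar{r})=(r_1,\ldots,r_{n+1})$ in which some entry $r_m$ with $m\notin\{j,k\}$ belongs to $(1+I)^*$.

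For such a nice tuple I would compute $\Phi_{n+1,j}\bigl((\bar{r})\bigr)=\phi_n\bigl(\{r_1,\ldots,\hat{r_j},\ldots,r_{n+1}\}\bigr)\wedge\tfrac{dr_j}{r_j}$ explicitly. Anticommutativity permits one to bring $r_m$ to the front of the inner Steinberg symbol, contributing a sign $(-1)^{m-1}$ when $m<j$ and $(-1)^{m}$ when $m>j$, after which the induction formula \ref{equinductionphi} yields
\[
\phi_n\bigl(\{r_1,\ldots,\hat{r_j},\ldots,r_{n+1}\}\bigr)=(\mr{sign}_{j,m})\,\log(r_m)\bigwedge_{l\ne j,m}\frac{dr_l}{r_l},
\]
with surviving factors in natural index order. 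Wedging with $dr_j/r_j$ on the right and then using the alternating property to restore it to its correct position within the natural ordering of $\{1,\ldots,n+1\}\setminus\{m\}$ introduces a further sign of $(-1)^{n-j+1}$ when $m<j$ and $(-1)^{n-j}$ when $m>j$. The two sub-cases collapse to the single closed form
\[
\Phi_{n+1,j}\bigl((\bar{r})\bigr)=(-1)^{m+n-j}\,\log(r_m)\bigwedge_{l\ne m}\frac{dr_l}{r_l},
\]
and the identical calculation for $\Phi_{n+1,k}$ produces $(-1)^{m+n-k}$ times the same canonical expression. The ratio is $(-1)^{k-j}$, as required.

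The step I expect to be the main obstacle is the initial reduction. The subgroup $\tn{Im}(A_j)\cap\tn{Im}(A_k)\subset F_{n+1}\bigl(R^*,(1+I)^*\bigr)$ could in principle contain exotic tuples whose membership in $F_{n+1}\bigl(R^*,(1+I)^*\bigr)$ arises from non-obvious relations in $K_{n+1}^{\tn{\fsz{M}}}(R)$ rather than from having a visible $(1+I)^*$-entry. The hard part is to combine the constructive factorization $r=s(1+s^{-1}i)$ underlying Lemma \ref{lemrelativegenerators} with the dual constraints that $A_j^{-1}((\bar{r}))$ and $A_k^{-1}((\bar{r}))$ both lie in the relative preimage, so that by iterated factorization the $(1+I)^*$-content of $(\bar{r})$ can always be transferred to a common position $m\notin\{j,k\}$. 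The $d\log$ map of Lemma \ref{lemdlog} provides a useful consistency check: the canonical expression on which both $\Phi_{n+1,j}$ and $\Phi_{n+1,k}$ land agrees, up to the predicted sign, with the formula \ref{equationphi} that the eventual global map $\phi_{n+1}$ is supposed to assign to $\{\bar{r}\}$, so the patching is globally coherent and independent of $j$, aside from the signs demanded by the exterior algebra.
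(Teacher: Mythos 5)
There is a genuine gap, and it sits exactly where you flagged ``the main obstacle'': the reduction to tuples whose $(1+I)^*$-entry lies at a common position $m\notin\{j,k\}$ is not available in general. When you factor each entry of a generic $(\bar{r})\in\tn{Im}(A_j)\cap\tn{Im}(A_k)$ as $r_l=s_l(1+i_l)$ via the splitting $R=S\oplus I$, the resulting decomposition of $\{r_0,\ldots,\hat{r_j},\ldots,r_n\}=\pm\{\bar{r}',r_k\}$ necessarily contains the factor $\{\bar{s}',1+i_k\}$, whose only $(1+I)^*$-entry sits in the slot of $r_k$; dually, the decomposition relevant to $\Phi_{n+1,k}$ contains $\{\bar{s}',1+i_j\}$ with its $(1+I)^*$-entry in the slot of $r_j$. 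These components cannot be pushed to a third position $m$. For them your sign-tracking computation does not apply, and the two sides of the lemma produce genuinely different differentials: up to sign, $\log(1+i_k)\,\frac{d\bar{s}'}{\Pi_{\bar{s}'}}\wedge\frac{dr_j}{r_j}$ versus $\log(1+i_j)\,\frac{d\bar{s}'}{\Pi_{\bar{s}'}}\wedge\frac{dr_k}{r_k}$. The identity $\Phi_{n+1,j}=(-1)^{k-j}\Phi_{n+1,k}$ is therefore \emph{not} an equality of canonical expressions; it holds only modulo $d\Omega_{R,I}^{n-1}$, which is the reason the target of the construction is the quotient $\Omega_{R,I}^{n}/d\Omega_{R,I}^{n-1}$ in the first place.

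The paper's proof isolates exactly this residual case and shows the discrepancy
\[
\sigma=\log(1+i_k)\frac{d\bar{s}'}{\Pi_{\bar{s}'}}\wedge\frac{dr_j}{r_j}+\log(1+i_j)\frac{d\bar{s}'}{\Pi_{\bar{s}'}}\wedge\frac{dr_k}{r_k}
\]
is exact: since $\{\bar{r}',r_j\}$ and $\{\bar{r}',r_k\}$ lie in $K_n^{\tn{\fsz{M}}}(R,I)$, their projections $\{\bar{s}',s_j\}$ and $\{\bar{s}',s_k\}$ are trivial in $K_n^{\tn{\fsz{M}}}(S)$, and applying the $d\log$ map of lemma \hyperref[lemdlog]{\ref{lemdlog}} forces $\frac{d\bar{s}'}{\Pi_{\bar{s}'}}\wedge\frac{ds_j}{s_j}=\frac{d\bar{s}'}{\Pi_{\bar{s}'}}\wedge\frac{ds_k}{s_k}=0$; subtracting these vanishing terms turns $\sigma$ into $(-1)^{n-1}d\big(\log((1+i_j)(1+i_k))\frac{d\bar{s}'}{\Pi_{\bar{s}'}}\big)$. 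In your write-up the $d\log$ map appears only as a ``consistency check,'' whereas it is in fact the essential tool that closes the argument. Your computation for the components carrying a $(1+I)^*$-entry at some $m\notin\{j,k\}$ matches the role played by the $\omega$-terms in the paper (which do cancel up to the bare sign $(-1)^{k-j}$), so that part of your plan is sound; what is missing is the entire treatment of the components where the relative content is concentrated at positions $j$ and $k$. A secondary concern: you invoke equivalence ``modulo the kernel of $q_{n+1}$,'' but at this stage it has not yet been established that $\Phi_{n+1,j}$ descends through $q_{n+1}$ (that is the content of lemma \hyperref[lemelementaryphi]{\ref{lemelementaryphi}}, which depends on the patching lemma), so that reduction would be circular as stated.
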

\begin{proof} Since both $\Phi_{n+1,j}$ and $\Phi_{n+1,k}$ send products in $\tn{Im}(A_j)\cap \tn{Im}(A_k)$ to sums in $\Omega_{R,I}^{n}/d\Omega_{R,I}^{n-1}$, it suffices to prove the statement of the lemma for a single generic $(n+1)$-tuple $(\bar{r})=(r_0,...,r_{n})$ in $\tn{Im}(A_j)\cap \tn{Im}(A_k)$.   Such an $(n+1)$-tuple satisfies the condition that the $n$-tuples $(r_0,...,\hat{r_j},...,r_{n})$ and $(r_0,...,\hat{r_k},...,r_{n})$, given by deleting its $j$th and $k$th entries, respectively, belong to $F_{n}\big(R^*,(1+I)^*\big)$.  Define an $(n-1)$-tuple $(\bar{r}')$ by deleting {\it both} entries $r_j$ and $r_k$ from $(\bar{r})$, as follows:\footnotemark\footnotetext{The purpose of isolating and renaming the $(n-1)$-tuple $(\bar{r}')$, even though all its entries come from $(\bar{r})$, is to avoid numbering issues later in the proof.}  
\[(\bar{r}'):=(r_0,...,\hat{r_j},...,\hat{r_k},...,r_{n})\in F_{n-1}(R^*).\]
 By anticommutativity, the Steinberg symbols $\{\bar{r}',r_j\}$ and $\{\bar{r}',r_k\}$, defined by appending the deleted entries $r_j$ and $r_k$ onto $\bar{r}'$, respectively, belong to $K_n ^{\tn{\fsz{M}}}(R,I)$.  Using the splitting $R=S\oplus I$, as in lemma \hyperref[lemrelativegenerators]{\ref{lemrelativegenerators}} above, each entry $r_l$ of $(\bar{r})$ may be factored into a product of the form $r_l=s_l(1+i_l)$, where $s_l$ belongs to $S^*$, and $i_l$ belongs to $I$. There then exist factorizations in Milnor $K$-theory:
\begin{equation}\label{equmilnorfactorizations}\begin{array}{lcl}\{\bar{r}',r_j\}&=&\{\bar{s}',1+i_j\}\displaystyle\prod_{l=0} ^{n-2}\{\bar{r_l}',r_j\} \hspace{.5cm}\tn{and}\\
\{\bar{r}',r_k\}&=&\{\bar{s}',1+i_k\}\displaystyle\prod_{l=0} ^{n-2}\{\bar{r_l}',r_k\},\end{array}\end{equation}
where $(\bar{s}'):=(s_0,...,\hat{s_j},...,\hat{s_k},...,s_{n})$, and where $(\bar{r_l}')=(s_0',...s_{l-1}',1+i_l',r_{l+1}',...,r_{n-2}')$ has its $l$th entry in $(1+I)^*$.\footnotemark\footnotetext{{\it A priori,} the first product in equation \hyperref[equmilnorfactorizations]{\ref{equmilnorfactorizations}} has an additional factor $\{\bar{s}',s_j\}$, and the second product has an additional factor $\{\bar{s}',s_k\}$, but both factors are trivial since $(\bar{r})\in F_{n+1}\big(R^*,(1+I)^*\big)$.  For example, for $n=3$, the Steinberg symbol $\{r_1,r_2,r_3\}$ factors as $\{s_1,s_2,s_3\}\{s_1,s_2,1+i_3\}\{s_1,1+i_2,r_3\}\{1+i_1,r_2,r_3\}$, where the first factor is trivial.}  By anticommutativity and the definition of $\Phi_{n+1,j}$, it follows that
\begin{equation}\label{equPhij}\begin{array}{lcl}\Phi_{n+1,j}\big((\bar{r})\big)&=&\displaystyle(-1)^{n-k}\phi_{n}\big(\{\bar{r}',r_k\}\big)\wedge\frac{dr_j}{r_j}\\&=&\displaystyle(-1)^{k+1}\log(1+i_k)\frac{d\bar{s}'}{\Pi_{\bar{s}'}}\wedge\frac{dr_j}{r_j}+(-1)^{n-k}\omega\wedge\frac{dr_k}{r_k}\wedge\frac{dr_j}{r_j},\end{array}\end{equation}
where 
\[\omega:=\sum_{l=0} ^{n-2}(-1)^{l}\log(1+i_l')\frac{ds_0'}{s_0'}\wedge...\wedge\frac{ds_{l-1}'}{s_{l-1}'}\wedge
\frac{dr_{l+1}'}{r_{l+1}'}\wedge...\wedge\frac{dr_{n-2}'}{r_{n-2}'},\]
and where $\Pi_{\bar{s}'}$ is the product of the entries in $\bar{s}'$. 

\comment{
\color{red}
For the first equality in equation \hyperref[equPhij]{\ref{equPhij}}, the full computation is:
\[\Phi_{n+1,j}\big((\bar{r})\big)=\gamma\circ\beta\circ A_j^{-1}\big((\bar{r})\big)=\gamma\circ\beta\big((r_0,...,r_{j-1},r_{j+1},...,r_{n}),r_j\big)\]
\[=\gamma\Big(\Phi_{n-1}\big((r_0,...,r_{j-1},r_{j+1},...,r_{n})\big),r_j\Big)\]
\[=\Phi_{n}\big((r_0,...,r_{j-1},r_{j+1},...,r_{n})\big)\wedge\frac{dr_j}{r_j}\]
\[=\phi_{n}\big(\{r_0,...,r_{j-1},r_{j+1},...,r_{n}\}\big)\wedge\frac{dr_j}{r_j}\]
\[=\phi_{n}\big(\{r_0,...,r_{j-1},r_{j+1},...,r_k,r_{k+1},...,r_{n}\}\big)\wedge\frac{dr_j}{r_j}\]
\[=(-1)^{n-k}\phi_{n-1}\big(\{\bar{r}',r_k\}\big)\wedge\frac{dr_j}{r_j},\]
where the factor of $(-1)^{n-k}$ comes from moving the element $r_k$ across $n-k$ elements to its right.  For the second equality, the full computation is as follows.  First, by the factorization in equation \hyperref[equmilnorfactorizations]{\ref{equmilnorfactorizations}},
\[(-1)^{n-k}\phi_{n}\big(\{\bar{r}',r_k\}\big)\wedge\frac{dr_j}{r_j}=(-1)^{n-k}\phi_{n}\Big(\{\bar{s}',1+i_j\}\prod_{l=0} ^{n-2}\{\bar{r_l}',r_j\} \Big)\wedge\frac{dr_j}{r_j}.\]
\[=(-1)^{n-k}\phi_{n}\big(\{\bar{s}',1+i_j\}\big)\wedge\frac{dr_j}{r_j}+(-1)^{n-k}\sum_{l=0} ^{n-2}\phi_{n}\big(\{\bar{r_l}',r_j\} \big)\wedge\frac{dr_j}{r_j}.\]
Moving the element $1+i_j$ to the left across the $n-1$ entries of $\bar{s}'$ in the first term gives a total exponent of $(n-k)+(n-1)=2n-k-1=k+1$ factors of $-1$ in the first term, which is the same as $k+1$ factors modulo $2$.  Applying $\phi_{n}$ then gives the first term in the second equality of equation \hyperref[equPhij]{\ref{equPhij}}.  Similarly, moving the element $1+i_l$ across $l$ entries $s_0',...s_{l-1}'$ of $\bar{r}_l'$ in the $l$th term of the sum gives a total exponent of $(n-k)+(l)$ factors of $-1$ in the $l$th term.  After applying $\phi_{n}$, the original $(n-k)$ factors of $-1$ are left factored out equation \hyperref[equPhij]{\ref{equPhij}}, while the remaining $l$ factors for each term are absorbed into the definition of $\omega$. 
\color{black}
}

Similarly,
\begin{equation}\label{equPhik}\begin{array}{lcl}\Phi_{n+1,k}\big((\bar{r})\big)&=&\displaystyle(-1)^{n-j-1}\phi_{n}(\{\bar{r}',r_j\})\wedge\frac{dr_k}{r_k},\\&=&\displaystyle(-1)^{j}\log(1+i_j)\frac{d\bar{s}'}{\Pi_{\bar{s}'}}\wedge\frac{dr_k}{r_k}+(-1)^{n-j-1}\omega\wedge\frac{dr_j}{r_j}\wedge\frac{dr_k}{r_k}.\end{array}\end{equation}

\comment{
\color{red}
The difference between equations \hyperref[equPhij]{\ref{equPhij}} and \hyperref[equPhik]{\ref{equPhik}} is that $j<k$.   The first equality in equation \hyperref[equPhik]{\ref{equPhik}} is analogous to the corresponding computation for equation \hyperref[equPhik]{\ref{equPhik}} up the step 
\[\Phi_{n+1,k}\big((\bar{r})\big)=...=\phi_{n}\big(\{r_0,...,r_{k-1},r_{k+1},...,r_{n+1}\}\big)\wedge\frac{dr_k}{r_k}.\]
The subsequent steps are
\[...=\phi_{n}\big(\{r_0,...,r_j,...,r_{k-1},r_{k+1},...,r_{n+1}\}\big)\wedge\frac{dr_k}{r_k}\]
\[=(-1)^{n-j-1}\phi_{n}\big(\{\bar{r}',r_j\}\big)\wedge\frac{dr_k}{r_k},\]
since there are only $n-j-1$, rather than $n-j$, entries to move the element $r_j$ across, due to the extraction of $r_k$. 
For the second equality, 
\[(-1)^{n-j-1}\phi_{n}\big(\{\bar{r}',r_j\}\big)\wedge\frac{dr_k}{r_k}=(-1)^{n-j-1}\phi_{n}\Big(\{\bar{s}',1+i_k\}\prod_{l=0} ^{n-2}\{\bar{r_l}',r_k\} \Big)\wedge\frac{dr_k}{r_k}.\]
\[=(-1)^{n-j-1}\phi_{n}\big(\{\bar{s}',1+i_k\}\big)\wedge\frac{dr_k}{r_k}+(-1)^{n-j-1}\sum_{l=0} ^{n-2}\phi_{n}\big(\{\bar{r_l}',r_k\} \big)\wedge\frac{dr_k}{r_k}.\]
Moving the element $1+i_k$ to the left across the $n-1$ entries of $\bar{s}'$ in the first term gives a total exponent of $(n-j-1)+(n-1)=-j$ factors of $-1$ in the first term, which is the same as $j$ factors modulo $2$.  Applying $\phi_{n}$ then gives the first term in equation \hyperref[equPhik]{\ref{equPhik}}.  Similarly, moving the element $1+i_l$ across $l$ entries $s_0',...s_{l-1}'$ of $\bar{r}_l'$ in the $l$th term of the sum gives a total exponent of $(n-j-1)+l$ factors of $-1$ in the $l$th term.  After applying $\phi_{n}$, the original $n-j-1$ factors of $-1$ are left factored out equation \hyperref[equPhik]{\ref{equPhik}}, while the remaining $l$ factors for each term are absorbed into the definition of $\omega$. 
\color{black}
}

The terms involving $\omega$ in equations \hyperref[equPhij]{\ref{equPhij}} and \hyperref[equPhik]{\ref{equPhik}} differ by the desired factor of $(-1)^{k-j}$ (note that $dr_j/r_j$ and $dr_k/r_k$ appear in the opposite order in the two equations).  It remains to show that the differential
\[\sigma:=\log(1+i_k)\frac{d\bar{s}'}{\Pi_{\bar{s}'}}\wedge\frac{dr_j}{r_j}+\log(1+i_j)\frac{d\bar{s}'}{\Pi_{\bar{s}'}}\wedge\frac{dr_k}{r_k},\]
is exact.  

\comment{
\color{red}
The point here is that multiplying the second differential by $(-1)^{k-j}$ and subtracting the two, as per the statement of the lemma, gives the differential 
\[(-1)^{k+1}\log(1+i_k)\frac{d\bar{s}'}{\Pi_{\bar{s}'}}\wedge\frac{dr_j}{r_j}-(-1)^{k-j}(-1)^{j}\log(1+i_j)\frac{d\bar{s}'}{\Pi_{\bar{s}'}}\wedge\frac{dr_k}{r_k},\]
which simplifies to 
\[(-1)^{k+1}\Bigg(\log(1+i_k)\frac{d\bar{s}'}{\Pi_{\bar{s}'}}\wedge\frac{dr_j}{r_j}+\log(1+i_j)\frac{d\bar{s}'}{\Pi_{\bar{s}'}}\wedge\frac{dr_k}{r_k}\Bigg),\]
and it is the differential in parentheses that we want to be exact. 
\color{black}
}

Since $\{\bar{r}',r_j\}$ and $\{\bar{r}',r_k\}$ belong to $K_n ^{\tn{\fsz{M}}}(R,I)$, their projections $\{\bar{s}',s_j\}$ and $\{\bar{s}',s_k\}$ in $K_n ^{\tn{\fsz{M}}}(S)$ are trivial.  Applying the canonical $d\log$ map from lemma \hyperref[lemdlog]{\ref{lemdlog}} to these projections yields
\begin{equation}\label{equdlog}\frac{d\bar{s}'}{\Pi_{\bar{s}'}}\wedge\frac{ds_j}{s_j}=\frac{d\bar{s}'}{\Pi_{\bar{s}'}}\wedge\frac{ds_k}{s_k}=0.\end{equation}
Hence, the differentials
\begin{equation}\begin{array}{lcl}\tau_1&:=&\displaystyle\log(1+i_k)\frac{d\bar{s}'}{\Pi_{\bar{s}'}}\wedge\frac{ds_j}{s_j}\hspace*{.5cm}\tn{and}\\\tau_2&:=&\displaystyle\log(1+i_j)\frac{d\bar{s}'}{\Pi_{\bar{s}'}}\wedge\frac{ds_k}{s_k},\end{array}\end{equation}
given by multiplying the differentials in equation \hyperref[equdlog]{\ref{equdlog}} by the appropriate logarithms, vanish.  Therefore, the differential 
\begin{equation}\begin{array}{lcl}\sigma&=&\sigma-\tau_1-\tau_2\\
&=&\displaystyle\log(1+i_k)\frac{d\bar{s}'}{\Pi_{\bar{s}'}}\wedge d\log(1+i_j)+\log(1+i_j)\frac{d\bar{s}'}{\Pi_{\bar{s}'}}\wedge d\log(1+i_k)\\&=&\displaystyle(-1)^{n-1}d\Big(\log\big((1+i_j)(1+i_k)\big)\frac{d\bar{s}'}{\Pi_{\bar{s}'}}\Big),\end{array}\end{equation}
is exact.  
\end{proof}


A ``global map" $\Phi_{n+1}$ from the subgroup of $F_{n+1}\big(R^*,(1+I)^*\big)$ generated by the union $\bigcup_{j=1}^{n+1}\tn{Im}(A_j)$ to $\Omega_{R,I}^{n}/d\Omega_{R,I}^{n-1}$ may now be defined by patching the maps $\Phi_{n+1,j}$ together, using lemma \hyperref[lempatchingphi]{\ref{lempatchingphi}}.  This procedure is analogous to the familiar procedure of patching together maps defined locally on open subsets of manifolds or schemes, to obtain a global map.\\ 

\begin{defi}\label{defipatching} For an $(n+1)$-tuple $(\bar{r})$ belonging to the the union $\bigcup_{j=1}^{n+1}\tn{Im}(A_j)$, define 
\[\Phi_{n+1}\big((\bar{r})\big):=(-1)^{n+1-j}\Phi_{n+1,j}\big((\bar{r})\big),\]
whenever the right-hand-side is defined, and extend to $\Phi_n$ to the subgroup of $F_{n+1}\big(R^*,(1+I)^*\big)$ generated by $\bigcup_{j=1}^{n+1}\tn{Im}(A_j)$ by taking inverses to negatives and multiplication to addition in $\Omega_{R,I}^{n}/d\Omega_{R,I}^{n-1}$. 
\end{defi}
The map $\Phi_{n+1}$ is a well-defined group homomorphism from $\bigcup_{j=1}^{n+1} \tn{Im}(A_j)$ to $\Omega_{R,I} ^n/d\Omega_{R,I} ^{n-1}$, by lemma \hyperref[]{\ref{lempatchingphi}}.  The choice of notation for $\Phi_{n+1}$ is a deliberate reflection of the fact that this map plays the same role as the maps $\Phi_{m+1}$ for $1\le m\le n-1$, introduced in definition \ref{defprelimmapsphi} above. However, whereas the maps $\Phi_{m+1}$ are defined in terms of the maps $\phi_{m+1}$, whose existence was assumed by induction, the situation here is the reverse; $\Phi_{n+1}$ is used to define $\phi_{n+1}$ below.  The image of $\bigcup_{j=1}^{n+1} \tn{Im}(A_j)$ under the quotient homomorphism $q_{n+1}$ generates the relative Milnor $K$-group $K_{n+1} ^{\tn{\fsz{M}}}(R,I)$, since any $(n+1)$-tuple $(\bar{r})=(r_0,...,r_{n})$ in $F_{n+1}(R^*)$ with at least one entry in $(1+I)^*$ belongs to $\bigcup_{j=1}^{n+1} \tn{Im}(A_j)$, and since the images of these elements under the quotient map generate $K_{n+1} ^{\tn{\fsz{M}}}(R,I)$. 

It is now possible to define the desired map $\phi_{n+1}: K_{n+1} ^{\tn{\fsz{M}}}(R,I)\rightarrow \Omega_{R,I}^n/d\Omega_{R,I} ^n.$\\

\begin{defi}\label{defiphin} Let $\prod_{l\in L}\{\bar{r_l}\}$ be an element of $K_{n+1} ^{\tn{\fsz{M}}}(R,I)$, where each factor $\{\bar{r_l}\}$ belongs to the union $\bigcup_{j=1}^{n+1} \tn{Im}(A_j)$, and where $L$ is a finite index set.  For each $l\in L$, let $(\bar{r}_l)$ be the element of $F_{n+1}\big(R^*,(1+I)^*\big)$ corresponding to the Steinberg symbol $\{\bar{r_l}\}$.  Define
\[\phi_{n+1}\Big(\prod_{l\in L}\{\bar{r_l}\}\Big):=\Phi_{n+1}\Big(\prod_{l\in L}(\bar{r_l})\Big).\]
\end{defi}


The final step regarding $\phi_{n+1}$ is to show that it is a well-defined, surjective group homomorphism.\\

\begin{lem}\label{lemelementaryphi} The map $\phi_{n+1}$ is a well-defined, surjective group homomorphism $K_{n+1}^{\tn{\fsz{M}}}(R,I)\rightarrow\Omega_{R,I}^{n}/d\Omega_{R,I}^{n-1}$.  
\end{lem}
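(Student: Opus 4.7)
The plan is to establish three things: $\phi_{n+1}$ is defined on every element of $K_{n+1}^{\tn{\fsz{M}}}(R,I)$; the value is independent of how such an element is written as a product of generators; and the image covers all of $\Omega_{R,I}^{n}/d\Omega_{R,I}^{n-1}$. The first point is immediate from Lemma \ref{lemrelativegenerators}: every element of $K_{n+1}^{\tn{\fsz{M}}}(R,I)$ is a product of Steinberg symbols each having at least one entry in $(1+I)^*$, and any such tuple lies in $\tn{Im}(A_j)$ for any index $j$ other than the $(1+I)^*$-position, hence in the domain of $\Phi_{n+1}$. The homomorphism property is then automatic from the extension clauses of Definitions \ref{defiPhinj} and \ref{defipatching}, since products of tuples in the source are mapped by construction to sums of differentials in the target.

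The crux is well-definedness. I need to show that whenever a product $\prod_l (\bar{r}_l)$ of tuples in $\bigcup_j \tn{Im}(A_j)$ maps under $q_{n+1}$ to the identity in $K_{n+1}^{\tn{\fsz{M}}}(R,I)$, the differential $\Phi_{n+1}(\prod_l (\bar{r}_l))$ vanishes in $\Omega_{R,I}^n/d\Omega_{R,I}^{n-1}$. Since the kernel of $q_{n+1}$ is generated by the multiplicativity and Steinberg relators of Lemma \ref{lemMilnorrelations}, it suffices to verify that $\Phi_{n+1}$ annihilates each. For the multiplicativity relator $(\bar{r}, rr', \bar{r}') \cdot (\bar{r}, r, \bar{r}')^{-1} \cdot (\bar{r}, r', \bar{r}')^{-1}$, in the generic case where $\bar{r}$ or $\bar{r}'$ already contains an entry in $(1+I)^*$, I choose the patching index $j$ to be the position of $rr'$. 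Then $\Phi_{n+1,j}$ of the full relator reads as $\phi_n(\{\bar{r}, \bar{r}'\}) \wedge (d\log(rr') - d\log(r) - d\log(r'))$, which vanishes by additivity of $d\log$. For the Steinberg relator $(\bar{r}, r, 1-r, \bar{r}')$, I choose $j$ to be the position of $r$; then $\Phi_{n+1,j}$ yields a multiple of $\phi_n(\{\bar{r}, 1-r, \bar{r}'\}) \wedge d\log(r)$, and the inductive formula for $\phi_n$ produces an explicit factor of $d\log(1-r)$ inside the first term. Moving $d\log(r)$ adjacent to $d\log(1-r)$ by wedge anticommutativity gives $d\log(r) \wedge d\log(1-r) = -dr \wedge dr/[r(1-r)] = 0$, exactly as in Lemma \ref{lemdlog}.

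For surjectivity, Lemma \ref{lemrelativekahlergenerators} reduces the task to hitting differentials of the form $r_0 \, dr_1 \wedge \ldots \wedge dr_m \wedge dr_{m+1} \wedge \ldots \wedge dr_n$ with $r_0, r_1, \ldots, r_m \in I$ and $r_{m+1}, \ldots, r_n \in R^*$. A direct computation using $\log(e^a) = a$ and $d\log(e^a) = da$ in the formula $\phi_{n+1}(\{r, \bar{r}\}) = \log(r) \, d\bar{r}/\Pi$ shows that the Steinberg symbol $\{e^{r_0 r_{m+1} \cdots r_n}, e^{r_1}, \ldots, e^{r_m}, r_{m+1}, \ldots, r_n\}$ maps to exactly this differential; the cancellation of the $e^{r_j}$ factors between $d\bar{r}$ and $\Pi$ leaves the coefficient $r_0$ and the desired product of differentials. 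This symbol lies in $K_{n+1}^{\tn{\fsz{M}}}(R,I)$ because its first entry belongs to $(1+I)^*$.

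The main obstacle I expect is the residual case of the multiplicativity check, where $rr'$ itself is the only $(1+I)^*$-entry and neither $r$ nor $r'$ individually lies in $(1+I)^*$. There the summands $(\bar{r}, r, \bar{r}')$ and $(\bar{r}, r', \bar{r}')$ need not lie in the domain of $\Phi_{n+1}$, so the short patching argument above does not apply verbatim. I would resolve this by invoking the splitting $R = S \oplus I$, as in the proof of Lemma \ref{lemrelativegenerators}, to factor each of the three tuples into pieces each having a $(1+I)^*$-entry at some position outside that of $rr'$, then apply the generic argument factor by factor. A similar device handles the anticommutativity and additive-inverse relations available under $5$-fold stability, since these are consequences of multiplicativity and Steinberg once the relevant symbols are forced into the domain.
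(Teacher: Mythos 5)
Your architecture (reduce well-definedness to the two relator types of Lemma \ref{lemMilnorrelations}, get the homomorphism property from the extension clauses, and prove surjectivity by exhibiting $\{e^{r_0r_{m+1}\cdots r_n},e^{r_1},...,e^{r_m},r_{m+1},...,r_n\}$ as an explicit preimage) matches the paper's, and your surjectivity step is essentially identical to it. Where you genuinely diverge is the choice of patching index in the well-definedness check: you delete the slot at which the three symbols of the multiplicativity relator \emph{differ}, reducing the claim to $\phi_n(\{\bar r,\bar r'\})\wedge\big(d\log(rr')-d\log r-d\log r'\big)=0$, and for the Steinberg relator you extract $d\log(r)\wedge d\log(1-r)=0$. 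The paper instead deletes a slot at which all the symbols \emph{agree} (a barred entry $r_j$, which always exists since $n\ge 2$ in the inductive step): the product of the resulting $n$-tuples is then itself a relator in $F_n(R^*)$, hence maps to $1\in K_{n}^{\tn{\fsz{M}}}(R,I)$ under $q_n$ and lies in $F_n\big(R^*,(1+I)^*\big)$ automatically, so $\Phi_{n+1,j}$ kills the whole relator via $\phi_n(1)\wedge(dr_j/r_j)=0$. The payoff of that choice is precisely the ``residual case'' you flag: when $rr'$ is the only $(1+I)^*$-entry, your index puts $(\bar r,r,\bar r')$ and $(\bar r,r',\bar r')$ individually outside the domain, and your proposed fix (refactoring each tuple via $R=S\oplus I$) is only sketched and is delicate to carry out without tacitly invoking the very multiplicativity relation being verified; with the shared-entry index the problem never arises, because membership in $F_n\big(R^*,(1+I)^*\big)$ is a condition on the product of deleted tuples, not on each factor. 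Two smaller points: Definition \ref{defiPhinj} only extends $\Phi_{n+1,j}$ to products sharing the same $j$th entry, so your computation at the differing slot should be phrased as applying the global $\Phi_{n+1}$ of Definition \ref{defipatching} to each factor and summing; and your Steinberg check needs a $(1+I)^*$-entry among $\bar r,\bar r'$ for $\phi_n(\{\bar r,1-r,\bar r'\})$ to be computable by the inductive formula (which it always has, since neither $r$ nor $1-r$ can lie in $(1+I)^*$ when both are units and $I$ is nilpotent). Your generic-case computations are correct, but you should either complete the residual case or adopt the shared-entry index, which disposes of all cases at once.
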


\begin{proof} To show that $\phi_{n+1}$ is well-defined, it suffices to show that $\phi_{n+1}$ maps each multiplicative relation $\{\bar{r},rr',\bar{r}'\}\{\bar{r},r,\bar{r}'\}^{-1}\{\bar{r},r',\bar{r}'\}^{-1}$ and each Steinberg relation $\{\bar{r},r,1-r,\bar{r}'\}$ to zero in $\Omega_{R,I}^{n}/d\Omega_{R,I}^{n-1}$, where each of the Steinberg symbols appearing in these relations is assumed to have at least one entry in $(1+I)^*$.   By definition \hyperref[defiphin]{\ref{defiphin}}, this is equivalent to showing that $\Phi_{n+1}$ maps the corresponding elements $(\bar{r},rr',\bar{r}')(\bar{r},r,\bar{r}')^{-1}(\bar{r},r',\bar{r}')^{-1}$ and $(\bar{r},r,1-r,\bar{r}')$ in $F_{n+1}\big(R^*,(1+I)^*\big)$ to zero.  By lemma \hyperref[lempatchingphi]{\ref{lempatchingphi}}, it suffices to show that these elements map to zero under any map $\Phi_{n+1,j}$ whose domain contains them.   By definition \hyperref[defiPhinj]{\ref{defiPhinj}}, these elements belong to the domain of $\Phi_{n+1,j}$ if and only if the elements of $F_n(R^*)$ given by deleting their $j$th entries belong to the subgroup $F_n\big(R^*,(1+I)^*\big)$.  By definition \hyperref[defprelimmapsphi]{\ref{defprelimmapsphi}}, this is true if and only if the corresponding images in Milnor $K$-theory under the quotient map $q_n$ belong to $K_{n}^{\tn{\fsz{M}}}(R,I)$.   But choosing $j$ to be any of the barred entries produces the identity element $1$ in $K_{n}^{\tn{\fsz{M}}}(R,I)$, since the resulting products of symbols are automatically relations.  Furthermore, the corresponding map $\Phi_{n+1,j}$ sends the required elements to zero by definition, since $\phi_{n}(1)=0$.  

The map $\phi_{n+1}$ is a group homomorphism by construction, since $\Phi_{n+1}$ is defined to respect the group structure in definition \hyperref[defipatching]{\ref{defipatching}}.  To prove that $\phi_{n+1}$ is surjective, it suffices to show that any element of the form 
$rd\bar{r}d\bar{r}'$ in $\Omega_{R,I}^{n}/d\Omega_{R,I}^{n-1}$ belongs to $\tn{Im}(\phi_n)$, where $r,\bar{r}\in I$ and $\bar{r}'\in R^*$.  But for such an element, 
\[rd\bar{r}\wedge d\bar{r}'=\phi_{n+1}\big(\{e^{r\Pi'},e^{\bar{r}},\bar{r}'\}\big),\]
where as usual $\Pi'$ is the product of the entries of $\bar{r}'$. 
\end{proof}

\begin{example} \tn{The following example illustrates the reasoning involved in the first part of the proof of lemma \hyperref[lemelementaryphi]{\ref{lemelementaryphi}}. Let $n=2$, and consider the multiplicative relation 
\[\{r_0,r_1r_1',r_2\}\{r_0,r_1,r_2\}^{-1}\{r_0,r_1',r_2\}^{-1}\hspace*{.5cm}\tn{in}\hspace*{.5cm}K_{3}^{\tn{\fsz{M}}}(R,I).\]
Choose $j=0$; $j=2$ would work just as well.  One then needs to show that the element 
\[(r_0,r_1r_1',r_2)(r_0,r_1,r_2)^{-1}(r_0,r_1',r_2)^{-1},\] 
belongs to the domain of the map $\Phi_{3,0}$, and that 
\[\Phi_{3,0}\big((r_0,r_1r_1',r_2)(r_0,r_1,r_2)^{-1}(r_0,r_1',r_2)^{-1}\big)=0\hspace*{.5cm}\tn{in}\hspace*{.5cm}\Omega_{R,I}^{2}/d\Omega_{R,I}^{1}.\]
This relation, incidentally, is easy to compute directly by treating the definition of $\phi_3$ as a {\it fait accompli} and using the Leibniz rule, but this is irrelevant at the moment.  The condition on the domain follows from the obvious fact that 
\[(r_0,r_1r_1',r_2)(r_0,r_1,r_2)^{-1}(r_0,r_1',r_2)^{-1}=A_1\big((r_1r_1',r_2)(r_1,r_2)^{-1}(r_1',r_2)^{-1}, r_0\big).\]
By the definition of $\Phi_{3,0}$, it follows that:
\[\Phi_{3,0}\big((r_0,r_1r_1',r_2)(r_0,r_1,r_2)^{-1}(r_0,r_1',r_2)^{-1}\big)=\gamma\circ\beta\circ A_1^{-1}\big((r_0,r_1r_1',r_2)(r_0,r_1,r_2)^{-1}(r_0,r_1',r_2)^{-1}\big)\]
\[=\phi_2\big(\{r_1r_1',r_2\}\{r_1,r_2\}^{-1}\{r_1',r_2\}^{-1}\big)\wedge\frac{dr_0}{r_0}\]
\[=\phi_2\big(1\big)\wedge\frac{dr_0}{r_0}=0.\]}
\end{example}

\subsection{Definition and Analysis of the Map $\displaystyle\psi_{n+1}:\Omega_{R,I} ^n/d\Omega_{R,I} ^{n-1}\rightarrow K_{n+1} ^{\tn{\fsz{M}}}(R,I)$}\label{subsectionanalysispsi}

As in the case of $\phi_{n+1}$, I will define $\psi_{n+1}$ in several steps.  Recall that the induction hypothesis assumes the existence of isomorphisms
\[\psi_{m+1}:\frac{\Omega_{R,I} ^m}{d\Omega_{R,I} ^{m-1}}\rightarrow K_{m+1} ^{\tn{\fsz{M}}}(R,I)\]
\[rd\bar{r}d\bar{r}'\mapsto\{e^{r\Pi'},e^{\bar{r}},\bar{r}'\},\]
for $r,\bar{r}\in I$ and $\bar{r}'\in R^*$ for all $1\le m<n$, appearing in equation \hyperref[equinductionpsi]{\ref{equinductionpsi}} above.\\ 

\begin{defi}\label{defprelimmapspsi} Let $m$ be a positive integer.  
\begin{enumerate}
\item Let $F_{m+1}(R)$ be the free abelian group generated by ordered $(m+1)$-tuples $(r_0,...,r_m)$ of elements of $R$.  
\item Let $Q_{m+1}$ be the quotient homomorphism
\[Q_{m+1}:F_{m+1}(R)\rightarrow\frac{\Omega_R ^{m}}{d\Omega_R ^{m-1}}\]
\begin{equation}\label{equQuotient}(r_0,...,r_{m})\mapsto r_0dr_1\wedge...\wedge dr_{m}.\end{equation}
\item Let $F_{m+1}(R,I)$ be the preimage in $F_{m+1}(R)$ of the group $\Omega_{R,I} ^{m}/d\Omega_{R,I} ^{m-1}\subset\Omega_R ^{m}/d\Omega_R ^{m-1}$.
\item For $1\le m< n-1$, let $\Psi_m$ be the composition
\begin{equation}\label{equPsimplusone}\Psi_{m+1}:=\psi_{m+1}\circ Q_{m+1}:F_{m+1}(R,I)\rightarrow K_{m+1}^{\tn{\fsz{M}}}(R,I).\end{equation}
\end{enumerate}
\end{defi}

An $(m+1)$-tuple $(r_0,...,r_m)$ satisfying the condition that at least one of its entries belongs to $I$ is automatically an element of $F_{m+1}(R,I)$.  If $1\le m<n$, then specifying such an element $r_j$ allows the image of $(r_0,...,r_m)$ under $\Psi_{m+1}$ to be expressed explicitly in terms of Steinberg symbols.  However, this is more complicated than the analogous case of $\Phi_{m+1}$, discussed in section \hyperref[subsectionanalysisphi]{\ref{subsectionanalysisphi}} above.  This is because the remaining elements $r_k$ for $k\ne j$ are generally not units, while equation \hyperref[equinductionpsi]{\ref{equinductionpsi}} specifies the images $\psi_{m+1}(rd\bar{r}\wedge d\bar{r}')$ only when for $r,\bar{r}\in I$ and $\bar{r}'\in R^*$.   Hence, it is generally necessary to use lemma \hyperref[lemstability]{\ref{lemstability}} to write entries of $(r_0,...,r_m)$ which are neither units nor elements of $I$ as sums of units, then express the differential $Q_{m+1}\big((r_0,...,r_m)\big)=r_0dr_1\wedge...\wedge dr_m$ as a sum whose individual terms involve only units and elements of $I$.  Then $\psi_{m+1}$ may be applied to obtain $\Psi_{m+1}\big((r_0,...,r_m)\big)$.\\ 

\begin{example} \tn{Suppose that $m=2$, and consider the triple $(r_0,r_1,r_2)$, where $r_0$ belongs to $I$, $r_1$ belongs to $R^*$, and $r_2$ belongs to neither.  Writing $r_2$ as a sum of two units $r_2'+r_2''$ permits the computation
\[\begin{array}{lcl}\vspace*{.2cm}\Psi_{3}\big((r_0,r_1,r_2)\big)&=&\psi_3(r_0dr_1\wedge dr_2)\\
&=&\vspace*{.2cm}\psi_3\big(r_0dr_1\wedge d(r_2'+r_2'')\big)\\
&=&\vspace*{.2cm}\psi_3(r_0dr_1\wedge dr_2'+r_0dr_1\wedge dr_2'')\\
&=&\vspace*{.2cm}\psi_3(r_0dr_1\wedge dr_2')\psi_3(+r_0dr_1\wedge dr_2'')\\
&=&\{e^{r_0r_1r_2'},r_1,r_2'\}\{e^{r_0r_1r_2''},r_1,r_2''\}.\end{array}\]}
\end{example}

It will be useful to consider diagrams of the form 
\[\begin{CD}
\displaystyle\frac{\Omega_{R,I} ^{n}}{d\Omega_{R,I} ^{n-1}}
@<Q_{n+1}<<F_{n+1}(R,I)
@<\Gamma_j<<F_n(R,I)\times R
@<\sigma<<F_n(R,I)\times (R^*)^2
@>\ee>>K_{n+1} ^{\tn{\fsz{M}}}(R,I),
\end{CD}\]
where the maps $\Gamma_j$, $\sigma$, and $\ee$ are defined as follows:\\

\begin{defi}\label{defiGammajsigmaepsilon} Let $j$ be an integer between $0$ and $n-1$ inclusive. 
\begin{enumerate}
\item Let $\Gamma_j$ be the map converting each $n$-tuple $(\bar{r})$ in $F_n(R,I)$ to an $(n+1)$-tuple in $F_{n+1}(R,I)$, by inserting an element of $R$ between the $j-1$ and $j$th entries of $(\bar{r})$.  Extend $\Gamma_j$ to inverses and products of $n$-tuples to produce products of $(n+1)$-tuples and their inverses sharing the same $j$th entries.  $\Gamma_j$ plays a role directly analogous to the map $A_j$ defined in definition \hyperref[defiAjbetagamma]{\ref{defiAjbetagamma}} above. 
\item Let $\sigma$ be the map
\begin{equation}\label{equsigma}\big((\bar{r}),(u,v)\big)\mapsto\big((\bar{r}),u+v\big).\end{equation}
\item Let $\ee$ be the map
\begin{equation}\label{equvarepsilon}\big((r,\bar{r}),(u,v)\big)\mapsto\big(\Psi_{n}(ur,\bar{r})\times \{u\}\big)\big(\Psi_{n}(vr,\bar{r})\times \{v\}\big).\end{equation}
\end{enumerate}
\end{defi}
Recall that $\times$ denotes multiplication in the Milnor $K$-ring $K_{*}^{\tn{\fsz{M}}}(R)$.  Since the multiplicative group $R^*$ is the first Milnor $K$-group $K_{1} ^{\tn{\fsz{M}}}(R)$, which is the first graded piece of the Milnor $K$-ring $K_{*} ^{\tn{\fsz{M}}}(R)$, the elements $u$ and $v$ may be viewed either as elements of $R^*$ or as elements of $K_{*} ^{\tn{\fsz{M}}}(R)$.  Writing $u$ and $v$ as Steinberg symbols $\{u\}$ and $\{v\}$ on the right-hand side of equation \hyperref[equvarepsilon]{\ref{equvarepsilon}}, emphasizes the latter view, since these elements are to be multiplied on the left in $K_{*} ^{\tn{\fsz{M}}}(R)$ by $\Psi_{n-1}(ur,\bar{r})$ and $\Psi_{n-1}(vr,\bar{r})$.  

It is straightforward to verify that these maps are well-defined, and that the maps $\Gamma_j$ are injective. 

\comment{
\color{red}

Full argument: $Q_{n+1}$ and $\sigma$ are obviously well-defined.  For $\Gamma_j$, the definition certainly produces an element of $F_{n+1}(R)$, and it remains to show that this element belongs to the subgroup $F_{n+1}(R,I)$.  To see this, it suffices to show that the image $Q_{n+1}\circ\Gamma_j\big((r_0,...,r_{n-1}),r\big)$ of a single pair $\big((r_0,...,r_{n-1}),r\big)$ in $F_n(R,I)\times R$ belongs to $\Omega_{R,I} ^{n}/d\Omega_{R,I} ^{n-1}$.  If $j\ne 0$, then the image is
\[Q_{n+1}\circ\Gamma_j\big((r_0,...,r_{n-1}),r\big)=r_0dr_1\wedge...\wedge dr_{j-1}\wedge dr\wedge dr_j\wedge...\wedge dr_{n-1}\]
\[=(-1)^{n-j}r_0dr_1\wedge...\wedge dr_{j-1}\wedge dr_j\wedge...\wedge dr_{n-1}\wedge dr=(-1)^{n-j}Q_n\big((r_0,...,r_{n-1})\big)\wedge dr.\]
The factor $Q_n\big((r_0,...,r_{n-1})\big)$ belongs to $\Omega_{R,I} ^{n-1}d\Omega_{R,I} ^{n-2}$ by hypothesis, and therefore may be represented by a sum of differentials of degree $n-1$ involving elements of $I$.  Hence, $Q_{n+1}\circ\Gamma_j$ may be represented by a sum of differentials of degree $n$ involving elements of $I$, which therefore maps to zero under the canonical projection $\Omega_{R} ^{n}\rightarrow \Omega_{S} ^{n}$, and is thus an element of $\Omega_{R,I} ^{n}$ by definition.   A similar argument applies when $j=0$.   The maps $\Gamma_j$ are injective for essentially the same reason that the maps $A_j$ in section \hyperref[subsectionanalysisphi]{\ref{subsectionanalysisphi}} are injective.   

Turning to $\ee$, it suffices to verify that the elements $\Psi_{n}(ur,\bar{r})\times\{u\}$ and $\Psi_{n}(vr,\bar{r})\times\{v\}$  belong to $K_{n+1} ^{\tn{\fsz{M}}}(R,I)$ if the element $(r,\bar{r})$ belongs to $F_n(R,I)$.   The first element may be written as 
\[\Psi_{n}(ur,\bar{r})\times\{u\}=\{ur,\bar{r}\}\times\{u\}=\{ur,\bar{r},u\}=\big(\{u,\bar{r}\}\{r,\bar{r}\}\big)\times\{u\}=\{u,\bar{r}\}\times\{u\}=\{u,\bar{r},u\}=1,\]
since $(r,\bar{r})\in F_n(R,I)$, where the last step makes repeated use of the anticommutative property of lemma \hyperref[lemrelationsstable]{\ref{lemrelationsstable}}.   The argument for $\Psi_{n}(vr,\bar{r})\times\{v\}$ is identical. 

\color{black}
}

Elements of $S$, and hence of $R$, may be decomposed into sums of units under appropriate stability and invertibility assumptions, as shown in lemma \hyperref[lemstability]{\ref{lemstability}} above.  The following lemma facilitates the use of this result in lemma \hyperref[lempatchingpsi]{\ref{lempatchingpsi}} below. \\ 


\begin{lem}\label{lemuvUV}Let $u$, $v$, $U$, and $V$ belong to $R^*$, and suppose that $u+v=U+V$.  Then for any $(\bar{r})\in F_n(R,I)$, 
\begin{equation}\label{equuvUV}\ee\big((\bar{r}),(u,v)\big)=\ee\big((\bar{r}),(U,V)\big).\end{equation}
\end{lem}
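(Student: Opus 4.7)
The strategy is to establish the identity \hyperref[equuvUV]{(\ref{equuvUV})} by direct Steinberg symbol manipulation, using two structural properties. First, since $\Psi_n(xr,\bar{r}) = \psi_n\circ Q_n(xr,\bar{r}) = \psi_n(xr\,d\bar{r})$ and the differential $xr\,d\bar{r}$ is $\mathbb{Z}$-linear in $x$, the map $x\mapsto\Psi_n(xr,\bar{r})$ is additive in $x$:
\[\Psi_n((x+y)r,\bar{r}) = \Psi_n(xr,\bar{r})\cdot\Psi_n(yr,\bar{r}).\]
Second, Steinberg symbols are multiplicative in each entry, so $\{xy\} = \{x\}\{y\}$ in $K_1^{\tn{\fsz{M}}}$. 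Combined with the distributivity of the ring product $\times$ over the group operations, these two properties allow both sides of \hyperref[equuvUV]{(\ref{equuvUV})} to be rewritten and compared.

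Setting $\lambda := U - u$ (so $V = v - \lambda$ by the hypothesis $u+v=U+V$), I would first handle the case $\lambda\in R^*$. Writing $U = u(1+\lambda/u)$ and $V = v(1-\lambda/v)$, in which all four of $u,v,1+\lambda/u,1-\lambda/v$ are units under this assumption, expansion via additivity in the first argument and multiplicativity in the last gives $\varepsilon((\bar{r}),(U,V))$ as $\varepsilon((\bar{r}),(u,v))$ times a collection of ``correction'' Steinberg symbols. These corrections organize into groups to which the Steinberg relation $\{x,1-x\}=1$ applies via the identities $(u/U)+(\lambda/U)=1$ and $(v/V)-(\lambda/V)=1$; together with anticommutativity and the other relations of lemma \hyperref[lemrelationsstable]{\ref{lemrelationsstable}}, the corrections multiply to the identity. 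To remove the assumption $\lambda\in R^*$, I would invoke lemma \hyperref[lemstability]{\ref{lemstability}}: the standing hypotheses ($5$-fold, hence $2$-fold stability of $S$, with $2$ invertible) imply that any $\lambda\in R$ is a sum of two units $\lambda = \lambda_1+\lambda_2$, so the transformation $(u,v)\to(U,V)$ decomposes as two successive unit shifts $(u,v)\to(u+\lambda_1,v-\lambda_1)\to(U,V)$, each of which is covered by the unit case.

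The main obstacle is the bookkeeping in the unit case: explicitly identifying the Steinberg relations responsible for each cancellation and tracking the anticommutativity signs. This is similar in spirit to the $d\log$-based cancellation in lemma \hyperref[lempatchingphi]{\ref{lempatchingphi}}, but it is more delicate, because the corrections must vanish in $K^{\tn{\fsz{M}}}$ itself and not merely in de~Rham cohomology. A tempting shortcut would be to observe that both sides of \hyperref[equuvUV]{(\ref{equuvUV})} have the common image $r\,d\bar{r}\wedge d(u+v)$ under the homomorphism $\phi_{n+1}$ constructed in section \hyperref[subsectionanalysisphi]{\ref{subsectionanalysisphi}}, but this cannot be used on its own, because the injectivity of $\phi_{n+1}$ is precisely what the subsequent construction of $\psi_{n+1}$ via $\varepsilon$ is meant to establish.
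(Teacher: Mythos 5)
Your reduction correctly sets up the problem, but the step carrying all the weight --- that the ``correction'' symbols cancel via the Steinberg relation --- is asserted rather than proved, and the mechanism you name does not engage the symbols that actually occur. Concretely, take $n=2$ with $(\bar r)=(r,r_1)$, $r\in I$, $r_1\in R^*$, and set $a=rr_1$. Expanding with $U=u\cdot(U/u)$, $V=v\cdot(V/v)$ and additivity of $\Psi_n$ in the first slot, the correction factor is (up to an anticommutativity sign and the common factor $\times\{r_1\}$) the product $\{e^{ua},U/u\}\{e^{va},V/v\}\{e^{\lambda a},U/V\}$. Multiplying this out and recombining first entries gives exactly $\{e^{Ua},U\}\{e^{Va},V\}\{e^{ua},u\}^{-1}\{e^{va},v\}^{-1}$ --- i.e.\ the correction being trivial is literally a restatement of the identity you are trying to prove; the expansion has made no progress. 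Moreover the identities $(u/U)+(\lambda/U)=1$ and $(v/V)-(\lambda/V)=1$ produce Steinberg relations among symbols in the units $u,v,U,V,\lambda$ themselves, whereas the symbols at hand have first entries of the form $e^{xa}$ with $a\in I$; no adjacent pair has the form $(x,1-x)$, so lemma \hyperref[lemrelationsstable]{\ref{lemrelationsstable}} does not apply as claimed. There is also a secondary gap in the reduction to $\lambda\in R^*$: writing $\lambda=\lambda_1+\lambda_2$ and shifting twice requires the intermediate pair $(u+\lambda_1,v-\lambda_1)$ to consist of units, which is not automatic (it can be arranged, but only by a more careful appeal to stability to choose $\lambda_1$ with $u+\lambda_1$, $v-\lambda_1$, $\lambda_1$, $\lambda-\lambda_1$ all invertible).

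The paper closes exactly this gap with the tool your last paragraph stops short of: not $\phi_{n+1}$ (which would indeed be circular), but the base-case isomorphisms $\phi_2$ and $\psi_2$, already known to be mutually inverse. One decomposes $rd\bar r'$ as $\sum_l r_l\,d\bar r_l\wedge d\bar r_l'$ with $r_l,\bar r_l\in I$ and $\bar r_l'\in R^*$, uses anticommutativity and the ring structure of $K_\bullet^{\tn{\fsz{M}}}(R)$ to write $\ee\big((\bar r),(u,v)\big)=\Big(\prod_l\{e^{\bar r_l},\bar r_l'\}\times\big(\{e^{ur_l\Pi_l'},u\}\{e^{vr_l\Pi_l'},v\}\big)\Big)^{(-1)^{n-1}}$, and then applies $\psi_2\circ\phi_2=\tn{Id}$ to each degree-two factor: its image under $\phi_2$ is $r_l\Pi_l'\,d(u+v)=r_l\Pi_l'\,d(U+V)$, whence $\{e^{ur_l\Pi_l'},u\}\{e^{vr_l\Pi_l'},v\}=\{e^{Ur_l\Pi_l'},U\}\{e^{Vr_l\Pi_l'},V\}$. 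This localizes the whole difficulty in $K_2$, where the comparison with differentials is already established, and no new symbolic cancellation is needed.
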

\begin{proof}Writing $(\bar{r})=(r,\bar{r}')$ to distinguish the first element, 
\[\begin{array}{lcl} \Psi_{n}(ur,\bar{r}')&=&\displaystyle\prod_l\psi_{n}(ur_ld\bar{r}_l\wedge d\bar{r}_l')\\
&=&\displaystyle\prod_l\{e^{ur_l\Pi_l'},e^{\bar{r}_l},\bar{r}_l'\},\end{array}\]
where $\sum_lr_ld\bar{r}_l\wedge d\bar{r}_l'$ is a decomposition of $rd\bar{r}'$ such that 
$r_l,\bar{r}_l\in I$ and $\bar{r}_l'\in R^*$.  Such a decomposition exists by lemma \hyperref[lemstability]{\ref{lemstability}} because $R$ is $2$-fold stable and $2$ is invertible in $R$. Similar formulas apply for $v$, $U$, and $V$. Thus
\[\begin{array}{lcl}\vspace*{.2cm}\ee\big((\bar{r}),(u,v)\big)&=&(\Psi_{n}\big(ur,\bar{r}')\times \{u\}\big)\big(\Psi_{n}(vr,\bar{r}')\times \{v\}\big)\\
&=&\displaystyle\prod_{l}\{e^{ur_l\Pi_l'},e^{\bar{r}_l},\bar{r}_l',u\}\prod_{l}\{e^{vr_l\Pi_l'},e^{\bar{r}_l},\bar{r}_l',v\}\\
&=&\displaystyle\Bigg(\prod_l\{e^{\bar{r}_l},\bar{r}_l'\}\times\big(\{e^{ur_l\Pi_l'},u\}\{e^{vr_l\Pi_l'},v\}\big)\Bigg)^{(-1)^{n-1}},\end{array}\]
where the exponent comes from moving the elements $e^{ur_l\Pi_l'}$ and $e^{vr_l\Pi_l'}$  to the right across the $n-1$ elements $e^{\bar{r}_l}$ and $\bar{r}_l$ before factoring out the symbols $\{e^{ur_l\Pi_l'},u\}$ and $\{e^{vr_l\Pi_l'},v\}$, which belong to $K_{2+1} ^{\tn{\fsz{M}}}(R)$.  Similarly,
\[\ee\big((\bar{r}),(U,V)\big)=\Bigg(\prod_l\{e^{\bar{r}_l},\bar{r}_l'\}\times(\{e^{Ur_l\Pi_l'},U\}\{e^{Vr_l\Pi_l'},V\}\big)\Bigg)^{(-1)^{n-1}}.\]
But for each $l$,
\[\begin{array}{lcl}\vspace*{.2cm}\{e^{ur_l\Pi_l'},u\}\{e^{vr_l\Pi_l'},v\}&=&\displaystyle\psi_2\circ\phi_2\big(\{e^{ur_l\Pi_l'},u\}\{e^{vr_l\Pi_l'},v\}\big)\\
&=&\vspace*{.2cm}\displaystyle\psi_2\Big(ur_l\Pi_l'\frac{du}{u}+vr_i\Pi_l'\frac{dv}{v}\Big)\\
&=&\vspace*{.2cm}\displaystyle\psi_2\big(r_l\Pi_l'd(u+v)\big)=\psi_2\big(r_l\Pi_l'd(U+V)\big)\\
&=&\displaystyle\{e^{Ur_l\Pi_l'},U\}\{e^{Vr_l\Pi_l'},V\}.\end{array}\]
Therefore, $\ee\big((\bar{r}),(u,v)\big)=\ee\big((\bar{r}),(U,V)\big)$, as claimed.
\end{proof}


The next step is to define maps  $\Psi_{n+1,j}$ analogous to the maps  $\Phi_{n+1,j}$ appearing in section \hyperref[subsectionanalysispsi]{\ref{subsectionanalysispsi}} above.\\

\begin{defi}\label{defiPsinplusonej}For a generator $(r_0,...,r_n)$ of $F_{n+1}(R,I)$, satisfying the condition that
$(r_0,...,\hat{r_j},...,r_n)\in F_{n}(R,I)$, define $\Psi_{n+1,j}\big((r_0,...,r_n)\big)$ to be the composition
$\ee\circ\sigma^{-1}\circ\Gamma_j ^{-1}\big((r_0,...,r_n)\big)$, and extend to products of $(n+1)$-tuples sharing 
the same $j$th entry by preserving multiplication. 
\end{defi}
To see that $\Psi_{n+1,j}$ is well defined, note that $\Gamma_j$ is injective, and although $\sigma$ is not injective, 
different preimages under $\sigma$ map to the same element of $K_{n+1} ^{\tn{\fsz{M}}}(R,I)$ under $\ee$ by 
lemma \hyperref[lemuvUV]{\ref{lemuvUV}}.\\

\begin{example}   \tn{Let $n=2$ and $j=1$, and consider the $3$-tuple $(r_0,r_1,r_2)$, where for simplicity I will assume that $r_0\in I$ and $r_2\in R^*$.   Then 
\[\begin{array}{lcl}\vspace*{.2cm}\Psi_{3,1}\big((r_0,r_1,r_2)\big)&=&\ee\circ\sigma^{-1}\circ\Gamma_3^{-1}\big((r_0,r_1,r_2)\big)\\
&=&\vspace*{.2cm}\ee\circ\sigma^{-1}\big((r_0,r_2),r_1\big)\\
&=&\vspace*{.2cm}\ee\big((r_0,r_2),(u_1,v_1)\big),\\
&=&\vspace*{.2cm}\big(\Psi_{2}\big(u_1r_0,r_2)\times \{u_1\}\big)\big(\Psi_{2}(v_1r_0,r_2)\times \{v_1\}\big)\\
&=&\{e^{u_1r_0r_2},r_2,u_1\}\{e^{v_1r_0r_2},r_2,v_1\},\end{array}\]
where $u_1+v_1$ is any decomposition of $r_1$ into a sum of units. }
\end{example}


The following lemma is, from a computational perspective, the most onerous part of the proof.\\ 

\begin{lem}\label{lempatchingpsi} $\Psi_{n+1,j}=\Psi_{n+1,k}^{(-1)^{k-j}}$ on the intersection $\tn{Im}(\Gamma_j)\cap \tn{Im}(\Gamma_k)\subset F_{n+1}(R,I)$ for any $1\le j<k\le n$. 
\end{lem}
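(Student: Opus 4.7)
The plan is to mirror the structure of the $\Phi$-patching lemma \ref{lempatchingphi}, with the $d\log$-based cancellation replaced by an appeal to the base case identity $\phi_2\circ\psi_2=\tn{Id}$ from lemma \ref{lembasecase}. Both $\Psi_{n+1,j}$ and $\Psi_{n+1,k}$ are extended multiplicatively by construction, so it suffices to check the identity on a single generic $(n+1)$-tuple $(\bar{r})=(r_0,\ldots,r_n)$ in $\tn{Im}(\Gamma_j)\cap\tn{Im}(\Gamma_k)$. Using the splitting $R=S\oplus I$, the Leibniz and alternating rules for K\"ahler differentials, and lemma \ref{lemrelativekahlergenerators}, this reduces to the canonical configuration in which $r_0\in I$ and $r_1,\ldots,r_n\in R^*$; other configurations are handled summand-by-summand by the same computation, with lemma \ref{lemuvUV} guaranteeing that the particular sum-of-units decompositions are immaterial.

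Assume the canonical configuration, and use lemma \ref{lemstability} to write $r_j=u_j+v_j$ and $r_k=u_k+v_k$ with $u_\bullet,v_\bullet\in R^*$. The induction hypothesis yields
\[\Psi_{n+1,j}(\bar{r})=\{e^{u_jr_0\Pi_j},r_1,\ldots,\hat{r_j},\ldots,r_n,u_j\}\,\{e^{v_jr_0\Pi_j},r_1,\ldots,\hat{r_j},\ldots,r_n,v_j\},\]
where $\Pi_j=\prod_{l\ne 0,j}r_l$, and analogously for $\Psi_{n+1,k}$. The core step is to collapse these two factors into a single symbol. I use anticommutativity to slide $u_j$ and $v_j$ from the last position into position~$1$ (contributing a sign $(-1)^{n-1}$), and then the bilinearity of the ring product of $K^{\tn{\fsz{M}}}_\bullet(R)$ to factor out the common tail, obtaining
\[\Psi_{n+1,j}(\bar{r})=\Big(\big(\{e^{u_jr_0\Pi_j},u_j\}\{e^{v_jr_0\Pi_j},v_j\}\big)\times\{r_1,\ldots,\hat{r_j},\ldots,r_n\}\Big)^{(-1)^{n-1}}.\]
The bracketed product lies in $K_2^{\tn{\fsz{M}}}(R,I)$, where the base case applies: under $\phi_2$ it becomes $r_0\Pi_j(du_j+dv_j)=r_0\Pi_j\,dr_j$ in $\Omega_{R,I}^1/dI$, whose image under $\psi_2$ is $\{e^{r_0\Pi},r_j\}$ with $\Pi=\prod_{l=1}^nr_l$. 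Moving $r_j$ back to position $j$ contributes a further $(-1)^{j-1}$, so
\[\Psi_{n+1,j}(\bar{r})=\{e^{r_0\Pi},r_1,\ldots,r_n\}^{(-1)^{n+j}},\]
and an identical computation gives $\Psi_{n+1,k}(\bar{r})=\{e^{r_0\Pi},r_1,\ldots,r_n\}^{(-1)^{n+k}}$. Since $(-1)^{n+j}=(-1)^{n-j}$ and hence $(-1)^{n+k}\cdot(-1)^{k-j}=(-1)^{n+j}$, comparing exponents produces the claimed relation $\Psi_{n+1,j}=\Psi_{n+1,k}^{(-1)^{k-j}}$.

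The principal obstacle will be the reduction from an arbitrary $(n+1)$-tuple in the intersection to the canonical configuration. A general tuple has entries in $R$ that need not lie in $I\cup R^*$, so one must decompose via $R=S\oplus I$, expand the relevant differentials via the Leibniz and alternating rules until each summand has at least one factor in $I$ and the remaining factors are units, and invoke $2$-fold stability to replace non-unit, non-$I$ entries by sums of units. The sign bookkeeping is routine but lengthy, and lemma \ref{lemuvUV} must be threaded through the reduction to ensure each branch is independent of the specific unit decompositions chosen.
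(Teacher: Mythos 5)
Your computation in the ``canonical configuration'' ($r_0\in I$ and $r_1,\ldots,r_n\in R^*$) is correct, and the $\psi_2\circ\phi_2$ collapsing trick is exactly the mechanism the paper itself relies on (both in lemma \ref{lemuvUV} and in the core of its proof of this lemma); the sign comparison $(-1)^{n+j}$ versus $(-1)^{n+k}(-1)^{k-j}$ also checks out. The gap is the reduction to that configuration, which is where the actual content of the lemma lives. As stated the reduction is circular: $\Psi_{n+1,j}$ is a map on the free group $F_{n+1}(R,I)$, and it is not yet known to respect the Leibniz, alternating, or exactness relations among $(n+1)$-tuples --- that is lemma \ref{lemelementarypsi}, proved \emph{after} and \emph{using} the patching lemma. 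The only legitimate rewritings are those confined to slots other than $j$ (for $\Psi_{n+1,j}$) or other than $k$ (for $\Psi_{n+1,k}$), since only those descend to identities in $\Omega^{n-1}_{R,I}/d\Omega^{n-2}_{R,I}$ after the relevant entry is deleted, together with the splitting of the deleted entry into a sum of units, which lemma \ref{lemuvUV} licenses.

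More seriously, even the legitimate moves cannot reach your canonical configuration. Take a tuple of the shape $(s_0,\ldots,1+i_j,\ldots,1+i_k,\ldots)$ with $s$-entries in $S^*$: it lies in $\tn{Im}(\Gamma_j)\cap\tn{Im}(\Gamma_k)$, but the entire $I$-content that puts it there sits in the $I$-parts of the two entries being deleted, and moving that content into the coefficient slot requires manipulating slots $j$ and $k$ themselves. For these configurations (the paper's factors $a,A,a',A'$) the identity is \emph{not} a formal consequence of anticommutativity plus the degree-two collapse. The paper must invoke the vanishing of the $S$-projections $s_0\,ds_j\wedge d\bar{s}'=s_0\,ds_k\wedge d\bar{s}'=0$ --- i.e., the hypothesis that the punctured tuples lie in the relative group --- apply $i_kd$ and $i_jd$ to these identities and push them through $\psi_n$ to kill the residual factors (equations \ref{secondthirdtrivial} and \ref{fourthfifthtrivial}), and separately check that the cross-terms $\{e^{i_ji_kr_l\Pi_l},r_l\Pi_l,r_l,\bar{r}_l\}$ are trivial because they factor into symbols with repeated entries. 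None of this is routine sign bookkeeping, and none of it appears in your proposal; it is the heart of the proof.
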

\begin{proof} I will work out the case $0<j<k\le n$; the other cases are similar.  Since both $\Psi_{n+1,j}$ and $\Psi_{n+1,k}$ send products in $\tn{Im}(\Gamma_j)\cap \tn{Im}(\Gamma_k)$ to products in $K_{n+1} ^{\tn{\fsz{M}}}(R,I)$, it suffices to prove the statement of the lemma for a single generic $(n+1)$-tuple $(\bar{r})=(r_0,...,r_n)$ of $F_{n+1}(R,I)$.  Such an $(n+1)$-tuple satisfies the condition that $(r_0,...,\hat{r_j},...r_n)$ and $(r_0,...,\hat{r_k},...r_n)$ both belong to $F_{n}(R,I)$.  Since $R$ is $2$-fold stable and $2$ is invertible in $R$, the omitted elements $r_j$ and $r_k$ may be decomposed into sums 
\begin{equation}\label{equrjrk}r_j=u_j+v_j\hspace*{.5cm}\tn{and}\hspace*{.5cm} r_k=u_k+v_k,\end{equation}
where $u_j$, $v_j$, $u_k$, and $v_k$ belong to $R^*$.  Using the splitting $R=S\oplus I$, these summands may be further decomposed as
\begin{equation}\label{equujvjukvk}u_j=a_j+m_j,\hspace*{.5cm}v_j=b_j+n_j,\hspace*{.5cm}u_k=a_k+m_k,\hspace*{.5cm}\tn{and}\hspace*{.5cm}v_k=b_k+n_k,\end{equation}
where $a_j$, $b_j$, $a_k$, and $b_k$ belong to $S^*$, and where $m_j$, $n_j$, $m_k$, and $n_k$ belong to $I$. Let $i_j=m_j+n_j$, and $i_k=m_k+n_k$ be the $I$-parts of $r_j$ and $r_k$.  Also define $\bar{r}':=(r_0,...,\hat{r_j},...,\hat{r_k},...,r_{n})$ to be the $(n-1)$-tuple given by omitting both entries $r_j$ and $r_k$ from $(\bar{r})$. 
Since $j<k$, 
\begin{equation}\label{equkminus2}\begin{array}{lcl} \Psi_{n}\big((r_0,...,\hat{r_j},...,r_{n})\big)&=&\psi_{n}\circ Q_{n}\big((r_0,...,\hat{r_j},...,r_{n})\big)\\
&=&\psi_{n}(r_0dr_1\wedge...\wedge\hat{dr_j}\wedge...\wedge dr_n)\\
&=&\psi_{n}(r_0dr_k\wedge d\bar{r}')^{(-1)^{k-2}}.\end{array}\end{equation}

\comment{
\color{red}
This is because $dr_k$ must be moved to the left across $k-2$ differentials $dr_1,...,\hat{dr_j},...,dr_{k-1}$. 
\color{black}
}

Since $(r_0,...,\hat{r_j},...r_n)\in F_{n}(R,I)$, the differential $r_0dr_k\wedge d\bar{r}'$ may be decomposed as a sum 
\begin{equation}\label{r0drkdecomp}r_0dr_k\wedge d\bar{r}'=\sum_lr_ldi_k\wedge d\bar{r}_l+\sum_\alpha r_\alpha du_k\wedge d\bar{r}_\alpha \wedge d\bar{r}_\alpha '+\sum_\alpha r_\alpha dv_k\wedge d\bar{r}_\alpha \wedge d\bar{r}_\alpha ',\end{equation}
where $r_l$ and $\bar{r}_l$ belong to $S^*$, $r_\alpha $ and $\bar{r}_\alpha $ belong to $I$, and $\bar{r}_\alpha '$ belongs to $R^*$.  

\comment{
\color{red}
To see this, first use the splitting $R=S\oplus I$ on $r_0$ and $\bar{r}'$:
\[r_0dr_k\wedge d\bar{r}'=(s_0+i_0)dr_k\wedge d(s_1+i_1)\wedge...\wedge \hat{dr_j}\wedge...\wedge\hat{dr_k}\wedge...\wedge d(s_n+i_n).\]
All terms except one involve at least one element of $I$, and the elements not in $I$ may be broken down into sums of units.  The resulting terms give the last two sums in equation \hyperref[r0drkdecomp]{\ref{r0drkdecomp}}.  The remaining term is $s_0dr_k\wedge d\bar{s}'$, where $s_0$ is the $S$-part of $r_0$, and similarly for $\bar{s}'$.  Applying the splitting to $r_k$ in this term, the summand $s_0ds_k\wedge d\bar{s}'$ vanishes, and the remaining summand involving $i_k$ may be written as the sum over $l$ after expressing elements of $S$ as sums of units. 
\color{black}
}

Thus, after manipulating some differentials, one obtains the expression
\begin{equation}\label{equbiggerkminus2}\begin{array}{lcl} \Psi_{n}\big((u_jr_0,r_1,...,\hat{r_j},...,r_{n})\big)=\psi_{n}(u_jr_0dr_k\wedge d\bar{r}')^{(-1)^{k-2}}\\
=\Big(\displaystyle\prod_l\{u_jr_l,e^{u_jr_li_k\Pi_l},\bar{r}_l\}
\prod_\alpha \{e^{u_jr_\alpha u_k\Pi_\alpha '},u_k,e^{\bar{r}_\alpha },\bar{r}_\alpha '\}
\prod_\alpha \{e^{u_jr_\alpha v_k\Pi_\alpha '},v_k,e^{\bar{r}_\alpha },\bar{r}_\alpha '\}\Big)^{(-1)^{k-2}}.\end{array}\end{equation}

\comment{
\color{red}
This requires some explanation.  The full computation is 
\[\Psi_{n}\big((u_jr_0,r_1,...,\hat{r_j},...,r_{n})\big)=\psi_{n}(u_jr_0dr_k\wedge d\bar{r}')^{(-1)^{k-2}}\]
\[=\psi_{n}\Big(u_j\sum_lr_ldi_k\wedge d\bar{r}_l+u_j\sum_\alpha r_\alpha du_k\wedge d\bar{r}_\alpha \wedge d\bar{r}_\alpha '+u_j\sum_\alpha r_\alpha dv_k\wedge d\bar{r}_\alpha \wedge d\bar{r}_\alpha '\Big)^{(-1)^{k-2}}\]
\[=\Big(\prod_l\psi_{n}(u_jr_ldi_k\wedge d\bar{r}_l)\prod_\alpha\psi_{n}(u_jr_\alpha du_k\wedge d\bar{r}_\alpha\wedge d\bar{r}_\alpha')\prod_i\psi_{n-1}(u_jr_\alpha dv_k\wedge d\bar{r}_\alpha\wedge d\bar{r}_\alpha')\Big)^{(-1)^{k-2}}\]
\[=\Big(\prod_l\{u_jr_l,e^{u_jr_li_k\Pi_l},\bar{r}_l\}
\prod_\alpha \{e^{u_jr_\alpha u_k\Pi_\alpha '},u_k,e^{\bar{r}_\alpha },\bar{r}_\alpha '\}
\prod_\alpha \{e^{u_jr_\alpha v_k\Pi_\alpha '},v_k,e^{\bar{r}_\alpha },\bar{r}_\alpha '\}\Big)^{(-1)^{k-2}}.\]
For the first product, the coefficient $u_jr_l$ is a unit, so the expression must be manipulated before equation \hyperref[equinductionpsi]{\ref{equinductionpsi}} may be applied.  Write $u_jr_ldi_k\wedge d\bar{r}_l=-i_kd(u_jr_l)\wedge d\bar{r}_l$ by exactness and the Leibniz rule.  Under $\psi_n$, this maps to $\{e^{u_jr_li_k\Pi_l},u_jr_l,\bar{r}_l\}^{-1}=\{u_jr_l,e^{u_jr_li_k\Pi_l},\bar{r}_l\}$ by anticommutativity, as desired.  For the second product, the coefficient $u_jr_l$ is an element of $I$, but the first differential $du_k$ is the differential of a unit, so this must be moved to the right across the differentials $d\bar{r}_\alpha$ before applying equation \hyperref[equinductionpsi]{\ref{equinductionpsi}}.  However, one may then move $u_k$ back to the left across the elements $e^{\bar{r}_\alpha}$ to obtain the second product of Steinberg symbols, and the signs cancel.  A similar argument applies to the third product. 
\color{black}
}

Arguing in a similar manner for $v_j$ leads to the following expression for $\Psi_{n+1,j}\big((r_0,...,r_n)\big)$:
\begin{equation}\label{equbiggestkminus2}\begin{array}{lcl} \vspace*{.2cm}\Psi_{n+1,j}\big((r_0,...,r_n)\big)=\ee\circ\sigma^{-1}\circ\Gamma_j ^{-1}\big((r_0,...,r_n)\big)\\
\hspace*{0cm}\vspace*{.5cm}=\Big(\Psi_{n}\big((u_jr_0,r_1,...,\hat{r_j},...,r_{n})\big)\times \{u_j\}\Big)\Big(\Psi_{n}\big((v_jr_0,r_1,...,\hat{r_j},...,r_{n})\big)\times \{v_j\}\Big)\\
=\Big(\displaystyle\prod_l\{u_jr_l,e^{u_jr_li_k\Pi_l},\bar{r}_l,u_j\}\prod_\alpha \{e^{u_jr_\alpha u_k\Pi_\alpha '},u_k,e^{\bar{r}_\alpha },\bar{r}_\alpha ',u_j\}
\prod_\alpha \{e^{u_jr_\alpha v_k\Pi_\alpha '},v_k,e^{\bar{r}_\alpha },\bar{r}_\alpha ',u_j\}\\
\hspace*{.3cm}\displaystyle\prod_l\{v_jr_l,e^{v_jr_li_k\Pi_l},\bar{r}_l,v_j\}\prod_\alpha \{e^{v_jr_\alpha u_k\Pi_\alpha '},u_k,e^{\bar{r}_\alpha },\bar{r}_\alpha ',v_j\}
\prod_\alpha \{e^{v_jr_\alpha v_k\Pi_\alpha '},v_k,e^{\bar{r}_\alpha },\bar{r}_\alpha ',v_j\}\Big)^{(-1)^{k-2}}\end{array}\end{equation}
\[=(abcABC)^{(-1)^{k-2}},\]
where the letters $a$, $b$, $c$, $A$, $B$, and $C$, stand for the six products over $l$ or $\alpha$, in the order shown.  By similar reasoning, and recalling that $j<k$, 
\begin{equation}\label{equbiggestjminus1}\begin{array}{lcl} \vspace*{.5cm}\Psi_{n+1,k}\big((r_0,...,r_n)\big)=\ee\circ\sigma^{-1}\circ\Gamma_k ^{-1}\big((r_0,...,r_n)\big)\\
=\Big(\displaystyle\prod_l\{u_kr_l,e^{u_kr_li_j\Pi_l},\bar{r}_l',u_k\}\prod_\alpha \{e^{u_kr_\alpha u_j\Pi_\alpha '},u_j,e^{\bar{r}_\alpha },\bar{r}_\alpha ',u_k\}\prod_\alpha \{e^{u_kr_\alpha v_j\Pi_\alpha '},v_j,e^{\bar{r}_\alpha },\bar{r}_\alpha ',u_k\}\\
\hspace*{.3cm}\displaystyle\prod_l\{v_kr_l,e^{v_kr_li_j\Pi_l},\bar{r}_l',v_k\}\prod_\alpha \{e^{v_kr_\alpha u_j\Pi_\alpha '},u_j,e^{\bar{r}_\alpha },\bar{r}_\alpha ',v_k\}\prod_\alpha \{e^{v_kr_\alpha v_j\Pi_\alpha '},v_j,e^{\bar{r}_\alpha },\bar{r}_\alpha ',v_k\}\Big)^{(-1)^{j-1}}\end{array}\end{equation}
\[=(a'b'c'A'B'C')^{(-1)^{j-1}},\]
where the letters $a'$, $b'$, $c'$, $A'$, $B'$, and $C'$, stand for the six products over $l$ or $\alpha$, in the order shown.

It will suffice to show that $a'b'c'A'B'C'=(abcABC)^{-1}$, since this implies that 
\[\Psi_{n+1,j}\big((r_0,...,r_n)\big)=(abcABC)^{(-1)^{k-2}}=\Big((a'b'c'B'C')^{(-1)^{j-1}}\Big)^{(-1)^{k-j}}=\Big(\Psi_{n+1,k}\big((r_0,...,r_n)\big)\Big)^{(-1)^{k-j}}.\]

\comment{
\color{red}
The full computation is 
\[(abcABC)^{(-1)^{k-2}}=(a'b'c'A'B'C')^{(-1)^{k-1}}=(a'b'c'A'B'C')^{(-1)^{(j-1)+(k-j)}}=\Big((a'b'c'B'C')^{(-1)^{j-1}}\Big)^{(-1)^{k-j}}.\]
\color{black}
}

Now by anticommutativity,
\begin{equation}\label{bprimeequalsbinverse}b'=b^{-1},c'=B^{-1},B'=c^{-1},C'=C^{-1}.\end{equation}

\comment{
\color{red}
Indeed,
\[b=\prod_\alpha \{e^{u_jr_\alpha u_k\Pi_\alpha '},u_k,e^{\bar{r}_\alpha },\bar{r}_\alpha ',u_j\}\hspace*{.5cm}\tn{and}\hspace*{.5cm} b'=\prod_\alpha \{e^{u_kr_\alpha u_j\Pi_\alpha '},u_j,e^{\bar{r}_\alpha },\bar{r}_\alpha ',u_k\};\]
The first entries of $b$ and $b'$ are the same.  Working with $b'$, moving $u_k$ to the left across the entries $u_j,e^{\bar{r}_\alpha },\bar{r}_\alpha '$, then moving $u_j$ to the right across the entries $e^{\bar{r}_\alpha },\bar{r}_\alpha '$, leaves one extra factor of $-1$, so $b'=b^{-1}$.   Analogous arguments apply to the pairs $(c',B)$, $(B',c)$, and $(C',C)$. It follows that \[(bcBC)^{(-1)^{k-2}}=(b'c'B'C')^{(-1)^{k-1}}=(b'c'B'C')^{(-1)^{(j-1)+(k-j)}}=\Big((b'c'B'C')^{(-1)^{j-1}}\Big)^{(-1)^{k-j}}.\]
\color{black}
}

It remains to show that $aA=(a'A')^{-1}$; i.e., that $aAa'A'=1$.  Factoring out terms with repeated entries, it suffices to show that 
\[\prod_l\{r_l,e^{u_jr_li_k\Pi_l},\bar{r}_l,u_j\}\prod_l\{r_l,e^{v_jr_li_k\Pi_l},\bar{r}_l,v_j\}
\prod_l\{r_l,e^{u_kr_li_j\Pi_l},\bar{r}_l,u_k\}\prod_l\{r_l,e^{v_kr_li_j\Pi_l},\bar{r}_l,v_k\}=1.\]

\comment{
\color{red}
Indeed,
\[aAa'A'=\prod_l\{u_jr_l,e^{u_jr_li_k\Pi_l},\bar{r}_l,u_j\}\prod_l\{v_jr_l,e^{v_jr_li_k\Pi_l},\bar{r}_l,v_j\}\prod_l\{u_kr_l,e^{u_kr_li_j\Pi_l},\bar{r}_l',u_k\}\prod_l\{v_kr_l,e^{v_kr_li_j\Pi_l},\bar{r}_l',v_k\}.\]
Factoring the first entry of $a$ two factors: the first has first and last entries $u_j$ and $u_j$, respectively, and therefore vanishes.  The second has first and last entries $r_l$ and $u_j$, and is retained.  Similar arguments apply to $A$, $a'$, and $A'$.  
\color{black}
}

Applying anticommutativity and factoring in $K_\bullet^{\tn{\fsz{M}}}(R)$, it suffices to show that
\[\prod_l\big(\{e^{u_jr_li_k\Pi_l},u_j\}\{e^{v_jr_li_k\Pi_l},v_j\}
\{e^{u_kr_li_j\Pi_l},u_k\}\{e^{v_kr_li_j\Pi_l},v_k\}\big)\times\{r_l,\bar{r}_l\}=1.\]

\comment{
\color{red}
Indeed,
Here, $\{r_1,\bar{r}_l\}$ is factored out of each term after moving its entries to the far right of each Steinberg symbol.
\color{black}
}

Using the isomorphisms $\phi_2$ and $\psi_2$, applied to each left-hand factor:
\begin{equation}\label{usingphi2psi2}\begin{array}{lcl}\vspace*{.2cm}\psi_2\circ\phi_2\big(\{e^{u_jr_li_k\Pi_l},u_j\}\{e^{v_jr_li_k\Pi_l},v_j\}\{e^{u_kr_li_j\Pi_l},u_k\}\{e^{v_kr_li_j\Pi_l},v_k\}\big)\\
\vspace*{.2cm}=\psi_2(r_li_k\Pi_ldu_j+r_li_k\Pi_ldv_j+r_li_j\Pi_ldu_k+r_li_j\Pi_ldv_k)\\
\vspace*{.2cm}=\psi_2(r_li_k\Pi_ldr_j+r_li_j\Pi_ldr_k)\\
\vspace*{.2cm}=\psi_2\big(r_l\Pi_ld(i_ji_k)+r_li_k\Pi_lds_j+r_li_j\Pi_lds_k\big)\\
\vspace*{.2cm}=\psi_2(-i_ji_kd(r_l\Pi_l)+r_li_k\Pi_lda_j+r_li_k\Pi_ldb_j+r_li_j\Pi_lda_k+r_li_j\Pi_ldb_k)\\
=\{e^{i_ji_kr_l\Pi_l},r_l\Pi_l\}^{-1}\{e^{r_li_k\Pi_la_j},a_j\}\{e^{r_li_k\Pi_lb_j},b_j\}
\{e^{r_li_j\Pi_la_k},a_k\}\{e^{r_li_j\Pi_lb_k},b_k\}.\end{array}\end{equation}

\comment{
\color{red}
For second equality, combine $r_j=u_j+v_j$ and $r_k=u_k+v_k$.   For third,  writing out the expression gives
\[\psi_2\big(r_l\Pi_ld(i_ji_k)+r_li_k\Pi_lds_j+r_li_j\Pi_lds_k\big)=\psi_2\big(r_li_k\Pi_ldi_j+r_li_j\Pi_ldi_k+r_li_k\Pi_lds_j+r_li_j\Pi_lds_k\big),\]
and the first and third, and second and fourth, terms combine to give the previous expression $\psi_2(r_li_k\Pi_ldr_j+r_li_j\Pi_ldr_k)$.  For fourth, The first term uses exactness and the Leibniz rule, while the other terms break up $s_j$ and $s_k$ into sums of units. 
\color{black}
}

``Re-multiplying" in the Milnor $K$-ring $K_{*}^{\tn{\fsz{M}}}(R)$, it suffices to show that
\[\prod_l\{e^{i_ji_kr_l\Pi_l},r_l\Pi_l,r_l,\bar{r}_l\}^{-1}\{e^{r_li_k\Pi_la_j},a_j,r_l,\bar{r}_l\}
\{e^{r_li_k\Pi_lb_j},b_j,r_l,\bar{r}_l\}
\{e^{r_li_j\Pi_la_k},a_k,r_l,\bar{r}_l\}\{e^{r_li_j\Pi_lb_k},b_k,r_l,\bar{r}_l\}=1.\]
The first factor is trivial, because it can be factored in $K_{n+1} ^{\tn{\fsz{M}}}(R,I)$ into a product of terms with repeated 
entries.  For the next two factors, note that since $r_0dr_j\wedge d\bar{r}'\in\Omega_{R,I} ^{n-1}/d\Omega_{R,I} ^{n-2}$, it follows that
\[s_0ds_j\wedge d\bar{s}'=\sum_lr_lda_j\wedge d\bar{r}_l+\sum_lr_ldb_j\wedge d\bar{r}_l=0.\]
Applying $i_kd$, 
\[\sum_li_kdr_l\wedge da_j\wedge d\bar{r}_l+\sum_li_kdr_l\wedge db_j\wedge d\bar{r}_l=0.\]
Under the isomorphism $\psi_{n}$, this maps to 
\begin{equation}\label{secondthirdtrivial}\prod_l\{e^{r_li_k\Pi_la_j},a_j,r_l,\bar{r}_l\}\{e^{r_li_k\Pi_lb_j},b_j,r_l,\bar{r}_l\}=1.\end{equation}
For the final two factors, apply $i_jd$ to $s_0ds_k\wedge d\bar{s}'=0$, then apply $\psi_{n}$ to show that 
\begin{equation}\label{fourthfifthtrivial}\prod_l\{e^{r_li_j\Pi_la_k},a_k,r_l,\bar{r}_l\}\{e^{r_li_j\Pi_lb_k},b_k,r_l,\bar{r}_l\}=1.\end{equation}
This completes the proof of the case $0<j<k\le n$.   The remaining cases, in which $j=0$, are nearly identical, though slightly easier. 

\comment{
\color{red}
Now consider the case $j=0,k=1$.  For a generator $(r_0,...,r_n)$ of $F_{n+1}(R,I)$ satisfying the condition that 
$(r_0,r_2,...,r_n)$ and $(r_1,r_2,...,r_n)$ belong to $F_{n}(R,I)$, it is necessary to show that
\[\Psi_{n+1,0}\big((r_0,...,r_n)\big)=\Psi_{n+1,1}\big((r_0,r_1,...,r_n)\big)^{-1}.\]
By arguments analogous to those above,
\[\Psi_{n+1,0}(r_0,...,r_n)=(\psi_{n}\big(u_0r_1d\bar{r}')\times \{u_0\}\big)\big(\psi_{n}(v_0r_1d\bar{r}')\times \{v_0\}\big)\]
\[=\prod_l\{e^{u_0i_1\Pi_l},\bar{r}_l,u_0\}
\prod_\alpha \{u_0u_1, e^{r_\alpha u_0u_1\Pi_\alpha '},e^{r_\alpha },e^{\bar{r}_\alpha },\bar{r}_\alpha ',u_0\}
\prod_\alpha \{u_0v_1, e^{r_\alpha u_0v_1\Pi_\alpha '},e^{r_\alpha },e^{\bar{r}_\alpha },\bar{r}_\alpha ',u_0\}\]
\[\prod_l\{e^{v_0i_1\Pi_l},\bar{r}_l,v_0\}\prod_\alpha \{v_0u_1, e^{r_\alpha v_0u_1\Pi_\alpha '},e^{r_\alpha },e^{\bar{r}_\alpha },\bar{r}_\alpha ',v_0\}
\prod_\alpha \{v_0v_1, e^{r_\alpha v_0v_1\Pi_\alpha '},e^{r_\alpha },e^{\bar{r}_\alpha },\bar{r}_\alpha ',v_0\},\]
where $r_0=u_0+v_0$ with $u_0,v_0\in R^*$, where $\bar{r}'=(r_2,...,r_n)$, and where I have used the decomposition
\[u_0r_1d\bar{r}'=\sum_lu_0i_1d\bar{r}_l+\sum_\alpha u_0u_1dr_\alpha d\bar{r}_\alpha d\bar{r}_\alpha '+\sum_\alpha u_0v_1dr_\alpha d\bar{r}_\alpha d\bar{r}_\alpha '.\]
Similarly, 
\[\Psi_{n+1,0}(r_0dr_1d\bar{r})=\prod_l\{e^{u_1i_0\Pi_l},\bar{r}_l,u_1\}
\prod_\alpha \{u_1u_0, e^{r_\alpha u_1u_0\Pi_\alpha '},e^{r_\alpha },e^{\bar{r}_\alpha },\bar{r}_\alpha ',u_1\}
\prod_\alpha \{u_1v_0, e^{r_\alpha u_1v_0\Pi_\alpha '},e^{r_\alpha },e^{\bar{r}_\alpha },\bar{r}_\alpha ',u_1\}\]
\[\prod_l\{e^{v_1i_0\Pi_l},\bar{r}_l,v_1\}\prod_\alpha \{v_1u_0, e^{r_\alpha v_1u_0\Pi_\alpha '},e^{r_\alpha },e^{\bar{r}_\alpha },\bar{r}_\alpha ',v_1\}
\prod_\alpha \{v_1v_0, e^{r_\alpha v_1v_0\Pi_\alpha '},e^{r_\alpha },e^{\bar{r}_\alpha },\bar{r}_\alpha ',v_1\}.\]
As in the case $0<j<k$, after factoring and canceling terms with repeated entries, the products indexed by $\alpha$ may be recognized as inverses by anticommutativity.  It remains to show that 
\[\prod_l\{e^{u_0i_1\Pi_l},\bar{r}_l,u_0\}\prod_l\{e^{v_0i_1\Pi_l},\bar{r}_l,v_0\}
\prod_l\{e^{u_1i_0\Pi_l},\bar{r}_l,u_1\}\prod_l\{e^{v_1i_0\Pi_l},\bar{r}_l,v_1\}=1.\]
Applying anticommutativity, and factoring in $K_{*}^{\tn{\fsz{M}}}(R)$, it suffices to show that 
\[\Big(\prod_l\{e^{u_0i_1\Pi_l},u_0\}\prod_l\{e^{v_0i_1\Pi_l},v_0\}
\prod_l\{e^{u_1i_0\Pi_l},u_1\}\prod_l\{e^{v_1i_0\Pi_l},v_1\}\Big)\times\{\bar{r}_l\}=1.\]

Using the isomorphisms $\phi_2$ and $\psi_2$, 
\[\psi_2\phi_2\big(\{e^{u_0i_1\Pi_l},u_0\}\{e^{v_0i_1\Pi_l},v_0\}\{e^{u_1i_0\Pi_l},u_1\}\{e^{v_1i_0\Pi_l},v_1\}\big)\]
\[=\psi_2\big(i_1\Pi_ldu_0+i_1\Pi_ldv_0+i_0\Pi_ldu_1+i_0\Pi_ldv_1\big)\]
\[=\psi_2(i_1\Pi_ldr_0+i_0\Pi_ldr_1)\]
\[=\psi_2(\Pi_ld(i_0i_1)+i_1\Pi_lds_0+i_0\Pi_lds_1)\]
\[=\psi_2(-i_0i_1d(\Pi_l)+i_1\Pi_lda_0+i_1\Pi_ldb_0+i_0\Pi_lda_1+i_0\Pi_ldb_1)\]
\[=\{e^{i_0i_1\Pi_l},\Pi_l\}^{-1}\{e^{i_1a_0\Pi_l},a_0\}
\{e^{i_1b_0\Pi_l},b_0\}\{e^{i_0a_1\Pi_l},a_1\}\{e^{i_0b_1\Pi_l},b_1\}\]
``Re-multiplying" in $K_{*}^{\tn{\fsz{M}}}(R)$, it suffices to show that 
\[\prod_l\{e^{i_0i_1\Pi_l},\Pi_l,\bar{r}_l\}^{-1}\{e^{i_1a_0\Pi_l},a_0,\bar{r}_l\}\{e^{i_1b_0\Pi_l},b_0,\bar{r}_l\}
\{e^{i_0a_1\Pi_l},a_1\}\{e^{i_0b_1\Pi_l},b_1,\bar{r}_l\}=1.\]
The first factor is trivial because $\{e^{i_0i_1\Pi_l},\Pi_l,\bar{r}_l\}$ can be factored in $K_{n+1} ^{\tn{\fsz{M}}}(R,I)$ into a product of terms with repeated entries.  For the next two factors, apply $i_1d$ to $s_0d\bar{s}'=0$, then apply $\psi_1$ to obtain
\[\prod_l\{e^{i_1a_0\Pi_l},a_0,\bar{r}_l\}\{e^{i_1b_0\Pi_l},b_0,\bar{r}_l\}=1.\]
For the final two factors, apply $i_0d$ to the equation $s_1d\bar{s}'=0$, then apply $\psi_2$ to obtain
\[\prod_l\{e^{i_0a_1\Pi_l},a_1,\bar{r}_l\}\{e^{i_0b_1\Pi_l},b_1,\bar{r}_l\}=1.\]
This completes the case $j=0,k=1$.  The case involving $j=0$ and general $k$ follows by slight modifications of 
this case.
\color{black}
}
\end{proof}


A ``global map" $\Psi_{n+1}$ from the subgroup of $F_{n+1}(R,I)$ generated by the union $\bigcup_{j=1}^{n+1}\tn{Im}(\Gamma_j)$ to $K_{n+1} ^{\tn{\fsz{M}}}(R,I)$ may now be defined by patching the maps $\Psi_{n+1,j}$ together, using lemma \hyperref[lempatchingpsi]{\ref{lempatchingphi}}.\\

\begin{defi}\label{defipatchingpsi} For an $(n+1)$-tuple $(\bar{r})$ belonging to the the union $\bigcup_{j=1}^{n+1}\tn{Im}(\Gamma_j)$, define 
\begin{equation}\label{equdefiPsinplusone}\Psi_{n+1}\big((\bar{r})\big):=\Psi_{n+1,j}\big((\bar{r})\big)^{(-1)^{n+1-j}},\end{equation}
whenever the right-hand-side is defined, and extend to $\Psi_n$ to the subgroup of $F_{n+1}(R,I)$ generated by $\bigcup_{j=1}^{n+1}\tn{Im}(\Gamma_j)$ by taking inverses to negatives and multiplication to multiplication in $K_{n+1} ^{\tn{\fsz{M}}}(R,I)$. 
\end{defi}

The map $\Psi_{n+1}$ is a well-defined group homomorphism from $\bigcup_{j=1}^{n+1} \tn{Im}(\Gamma_j)$ to $K_{n+1} ^{\tn{\fsz{M}}}(R,I)$, by lemma \hyperref[]{\ref{lempatchingpsi}}.  The choice of notation for $\Psi_{n+1}$ is a deliberate reflection of the fact that this map plays the same role as the maps $\Psi_{m+1}$ for $1\le m\le n-1$, introduced in definition \ref{defprelimmapspsi} above.  The image of $\bigcup_{j=1}^{n+1} \tn{Im}(\Gamma_j)$ under the quotient homomorphism $Q_{n+1}$ generates the group $\Omega_{R,I} ^n/d\Omega_{R,I} ^{n-1}$, since any $(n+1)$-tuple $(\bar{r})=(r_0,...,r_{n})$ in $F_{n+1}(R)$ with at least one entry in $I$ belongs to $\bigcup_{j=1}^{n+1} \tn{Im}(\Gamma_j)$, and since the images of these elements under the quotient map generate $\Omega_{R,I} ^n/d\Omega_{R,I} ^{n-1}$.

It is now possible to define the desired map $\psi_{n+1}: \Omega_{R,I}^n/d\Omega_{R,I} ^n\rightarrow K_{n+1} ^{\tn{\fsz{M}}}(R,I)$.\\

\begin{defi}\label{defipsinplusone} For any element $\omega\in\Omega_{R,I} ^n/d\Omega_{R,I} ^{n-1}$, write 
$\omega=\sum_ir_id\bar{r}_i$, where $r_id\bar{r_i}\in Q_{n+1}\big(\bigcup_j \tn{Im}(\Gamma_j)\big)$. Now define
\begin{equation}\label{equdefipsinplusone}\psi_{n+1}(\omega)=\Psi_{n+1}\big(\prod_i(r_i,\bar{r_i})\big).\end{equation}
\end{defi}

The next step is to show that $\psi_{n+1}$ is a well-defined group homomorphism.\\

\begin{lem}\label{lemelementarypsi} The map $\psi_{n+1}$ is a well-defined group homomorphism $\Omega_{R,I}^{n}/d\Omega_{R,I}^{n-1}\rightarrow K_{n+1}^{\tn{\fsz{M}}}(R,I)$.  
\end{lem}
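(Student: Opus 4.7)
The plan is to follow the template of Lemma \ref{lemelementaryphi}, systematically reducing each verification to the induction hypothesis via the patching Lemma \ref{lempatchingpsi}. By Definitions \ref{defipatchingpsi} and \ref{defipsinplusone}, $\Psi_{n+1}$ respects multiplication and $\psi_{n+1}$ converts additive combinations in $\Omega_{R,I}^n/d\Omega_{R,I}^{n-1}$ into multiplicative ones in $K_{n+1}^{\tn{\fsz{M}}}(R,I)$, so the group-homomorphism property is automatic once well-definedness is established. The remaining task is to show that $\Psi_{n+1}$ annihilates the kernel of $Q_{n+1}$ on the subgroup of $F_{n+1}(R,I)$ generated by $\bigcup_j\tn{Im}(\Gamma_j)$. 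By Lemma \ref{lemKahlerrelations} this kernel is generated by the additivity, Leibniz, alternating, and exactness relations.

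For additivity, the Leibniz rule, and the alternating relation the pattern mirrors the proof of Lemma \ref{lemelementaryphi}. In each case a position $j$ lying outside the ``active'' entries of the relation may be selected; by Lemma \ref{lempatchingpsi}, $\Psi_{n+1}$ may then be computed as $\Psi_{n+1,j}=\varepsilon\circ\sigma^{-1}\circ\Gamma_j^{-1}$. Extracting the entry $r_j$ and writing $r_j=u+v$ via Lemma \ref{lemstability} leaves an $n$-tuple combination in $F_n(R,I)$ that is exactly the level-$n$ version of the given relation, with its first coordinate scaled by $u$ or $v$. The induction hypothesis on $\psi_n$ shows that $\Psi_n$ sends this combination to $1$, and since $1\times\{u\}=1\times\{v\}=1$ in the Milnor $K$-ring, $\Psi_{n+1,j}$ annihilates the relation.

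The main obstacle is the exactness relation, whose generators are tuples $(1,r_1,\ldots,r_n)$ in $F_{n+1}(R,I)$ with at least one $r_i$ lying in $I$; anticommutativity lets us assume $r_1\in I$. Extracting at $j=0$ and writing $1=u+v$ with $u,v\in R^*$ gives
\[
\Psi_{n+1,0}\bigl((1,r_1,\ldots,r_n)\bigr)=\bigl(\psi_n(ur_1\,dr_2\wedge\cdots\wedge dr_n)\times\{u\}\bigr)\bigl(\psi_n(vr_1\,dr_2\wedge\cdots\wedge dr_n)\times\{v\}\bigr).
\]
After using anticommutativity to bring $u$ and $v$ adjacent to $e^{ur_1\Pi}$ and $e^{vr_1\Pi}$, where $\Pi$ is the product of the remaining unit entries (with the mixed $I/R^*$ case first expanded via Lemma \ref{lemrelativekahlergenerators}), the $\psi_2\circ\phi_2$ computation of equation \ref{usingphi2psi2} in the proof of Lemma \ref{lempatchingpsi} applies: $\phi_2\bigl(\{e^{ur_1\Pi},u\}\{e^{vr_1\Pi},v\}\bigr)=r_1\Pi\,du+r_1\Pi\,dv=r_1\Pi\,d(u+v)=r_1\Pi\,d(1)=0$, so that $K_2^{\tn{\fsz{M}}}$-factor is trivial, and wedging with the residual Steinberg symbol on the right preserves triviality. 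With all four relations verified, $\psi_{n+1}$ is well-defined, and the homomorphism property follows from the first paragraph.
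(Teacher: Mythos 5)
Your proposal follows the paper's strategy---reduce to a single $\Psi_{n+1,j}$ via the patching lemma \ref{lempatchingpsi} and let the induction hypothesis absorb the resulting level-$n$ relation---and for the additivity, Leibniz, and alternating relations your argument coincides with the paper's. Where you genuinely add something is the exactness relation $d\bar{r}=0$: the paper's proof enumerates only the first three relations of lemma \ref{lemKahlerrelations} and justifies every case with the single remark that ``omitting the $j$th entry yields a relation in $F_{n}(R,I)$,'' but that justification cannot cover exactness, because the map $\ee$ scales the coefficient of the remaining $n$-tuple by a unit $u$ (or $v$), and $u\,dr_1\wedge\cdots\wedge dr_n$ is no longer exact. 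Your explicit computation---extract at $j=0$, write $1=u+v$, isolate $\{e^{ur_1\Pi},u\}\{e^{vr_1\Pi},v\}$ by anticommutativity, and kill it with $\psi_2\circ\phi_2$ and $r_1\Pi\,d(u+v)=r_1\Pi\,d(1)=0$---is exactly the right repair, and it reuses the same $K_2$ device the paper itself deploys in lemmas \ref{lemuvUV} and \ref{lempatchingpsi}. So your proof is correct and in fact closes a small gap in the paper's own argument. The only caveat worth recording is that your reduction ``anticommutativity lets us assume $r_1\in I$'' presupposes that the alternating relation has already been shown to die under $\Psi_{n+1}$; your ordering of the verifications does ensure this, but it should be said explicitly.
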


\begin{proof} To show that $\psi_{n+1}$ is well-defined, it suffices to show that $\psi_{n+1}$ maps the relations in lemma \hyperref[lemKahlerrelations]{\ref{lemKahlerrelations}} to the identity in $K_{n+1}^{\tn{\fsz{M}}}(R,I)$.  To streamline the notation, let $\ms{R}$ be such a relation.   Following similar reasoning to that used in the proof of lemma \hyperref[lemelementaryphi]{\ref{lemelementaryphi}}, it suffices to show that $\Psi_{n+1,j}(\ms{R})$ is defined, and equal to $1$, for some $j$.  Now $\Psi_{n+1,j}(\ms{R})$ is defined whenever $j\ne l$ for the additivity relation, whenever $j\ne 0,l$ for the Leibniz rule, and whenever $j\ne l, l+1$ for anticommutativity. In all cases, $\Psi_{n+1,j}(\ms{R})=1$, since omitting the $j$th entry yields a relation in $F_{n}(R,I)$.  The map $\psi_{n+1}$ is a group homomorphism by construction, since $\Psi_{n+1}$ is defined to respect the group structure in definition \hyperref[defipatchingpsi]{\ref{defipatchingpsi}}.  
\end{proof}

The following lemma is the final step in the proof of the theorem:\\

\begin{lem} The maps $\phi_{n+1}$ and $\psi_{n+1}$ are inverse isomorphisms.  
\end{lem}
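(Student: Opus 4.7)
The plan is to show that each composition agrees with the identity on a set of generators, and then invoke lemmas \ref{lemelementaryphi} and \ref{lemelementarypsi}, which guarantee that both $\phi_{n+1}$ and $\psi_{n+1}$ are group homomorphisms. By lemma \ref{lemrelativegenerators} together with anticommutativity, $K_{n+1}^{\tn{\fsz{M}}}(R,I)$ is generated by Steinberg symbols $\{r_0,r_1,\dots,r_n\}$ with $r_0\in(1+I)^*$ and $r_1,\dots,r_n\in R^*$; by lemma \ref{lemrelativekahlergenerators}, the group $\Omega_{R,I}^n/d\Omega_{R,I}^{n-1}$ is generated by differentials of the form $rd\bar{r}\wedge d\bar{r}'$ with $r\in I$, $\bar{r}\in I$, and $\bar{r}'\in R^*$. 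It suffices to verify the two compositions on these distinguished generators, and the verification in each direction is a direct manipulation of the explicit formulas \ref{equationphi} and \ref{equationpsi}.

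For $\phi_{n+1}\circ\psi_{n+1}$, formula \ref{equationpsi} gives $\psi_{n+1}(rd\bar{r}\wedge d\bar{r}')=\{e^{r\Pi'},e^{\bar{r}},\bar{r}'\}$, where $\Pi'$ is the product of the entries of $\bar{r}'$. Because $r\Pi'$ and the entries of $\bar{r}$ all lie in the nilpotent ideal $I$, the power-series identities $\log(e^x)=x$ and $d(e^x)/e^x=dx$ apply exactly. Applying formula \ref{equationphi} therefore yields
\[
r\Pi'\cdot dr_1\wedge\cdots\wedge dr_m\wedge\frac{dr_{m+1}}{r_{m+1}}\wedge\cdots\wedge\frac{dr_n}{r_n}=rd\bar{r}\wedge d\bar{r}',
\]
the scalar $\Pi'=r_{m+1}\cdots r_n$ cancelling against the denominators.

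For $\psi_{n+1}\circ\phi_{n+1}$, formula \ref{equationphi} gives
\[
\phi_{n+1}(\{r_0,r_1,\dots,r_n\})=\log(r_0)\,\frac{dr_1}{r_1}\wedge\cdots\wedge\frac{dr_n}{r_n},
\]
and $\log(r_0)$ is a well defined element of $I$ since $r_0\in(1+I)^*$ and $I$ is nilpotent. Rewriting this differential as $(\log(r_0)/r_1\cdots r_n)\,dr_1\wedge\cdots\wedge dr_n$ puts it into the standard form of formula \ref{equationpsi} with empty $\bar{r}$, coefficient $\log(r_0)/\Pi'\in I$, and $\bar{r}'=(r_1,\dots,r_n)$. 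Applying $\psi_{n+1}$ produces $\{e^{\log(r_0)},r_1,\dots,r_n\}=\{r_0,r_1,\dots,r_n\}$, so the composition is again the identity on the chosen generators, and hence on all of $K_{n+1}^{\tn{\fsz{M}}}(R,I)$ by homomorphy.

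The one point requiring care is not the above cancellations, which are automatic, but rather matching the explicit formulas \ref{equationphi} and \ref{equationpsi} against the inductively patched definitions of $\phi_{n+1}$ and $\psi_{n+1}$ on standard generators. This reduces to unwinding one level of $\Phi_{n+1,j}$ (respectively $\Psi_{n+1,j}$) for a convenient index $j$, so that removing the $j$th entry leaves a tuple to which the induction hypothesis \ref{equinductionphi} (respectively \ref{equinductionpsi}) applies, and then checking that the sign conventions of definitions \ref{defipatching} and \ref{defipatchingpsi} recover exactly \ref{equationphi} and \ref{equationpsi}. Once this identification is made, the power-series computations above close the induction and establish that $\phi_{n+1}$ and $\psi_{n+1}$ are inverse isomorphisms.
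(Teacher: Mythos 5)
Your proposal is correct and follows essentially the same route as the paper: both compositions are checked on the distinguished generators $\{r,\bar{r}\}$ with $r\in(1+I)^*$ and $rd\bar{r}\wedge d\bar{r}'$ with $r,\bar{r}\in I$, $\bar{r}'\in R^*$, using the explicit formulas and the exact power-series identities $\log(e^x)=x$, $e^{\log(r)}=r$ on the nilpotent ideal. Your closing remark about reconciling the closed-form expressions with the patched definitions $\Phi_{n+1,j}$, $\Psi_{n+1,j}$ is a point the paper leaves implicit, but it does not change the argument.
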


\begin{proof}It suffices to show this on sets of generators.  $K_{n+1} ^{\tn{\fsz{M}}}(R,I)$ is generated by 
elements of the form $\{r_0,...,r_{n}\}$ with $r_i\in S^*\cup(1+I)^*$ and at least one $r_i$ in $(1+I)^*$.  
By anticommutativity, such an element can be written as $\{r,\bar{r}\}$, where $r\in(1+I)^*$ and $\bar{r}\in R^*$.  
For such an element,
\begin{equation}\label{inversepsiphi}\psi_{n+1}\circ\phi_{n+1}\big(\{r,\bar{r}\}\big)=\psi_{n+1}\Big(\log(r)\frac{d\bar{r}}{\Pi}\Big)=\{e^{\frac{\log(r)}{\Pi}\Pi},\bar{r}\}=\{r,\bar{r}\}.\end{equation}
Finally, $\Omega_{R,I} ^n/d\Omega_{R,I} ^{n-1}$ is generated by elements of the form $rdr_1\wedge...\wedge dr_n$ 
with $r_i\in S^*\cup I$ and at least one $r_i$ in $I$.  By anticommutativity and exactness, such an element can 
be written as $rd\bar{r}\wedge d\bar{r}'$, where $r,\bar{r}\in I$ and $\bar{r}'\in R^*$.  For such an element,
\begin{equation}\label{inversepsiphi}\phi_{n+1}\big(\psi_{n+1}(rd\bar{r}\wedge d\bar{r}')\big)=\phi_{n+1}\big(\{e^{r\Pi'},e^{\bar{r}},\bar{r}'\}\big)= \log\big(e^{r\Pi'}\big)d\log(e^{\bar{r}})\wedge \frac{d\bar{r}'}{\Pi'}=rd\bar{r}\wedge d\bar{r}'.\end{equation}
\end{proof}

\section{Discussion and Applications}\label{sectiondiscussion}

\subsection{Green and Griffiths: Infinitesimal Structure of Chow Groups}\label{subsectionGG}

The original motivation for this paper arose from an attempt to understand Green and Griffiths' suggestive yet incomplete study \cite{GreenGriffithsTangentSpaces05} of the infinitesimal structure of cycle groups and Chow groups over smooth algebraic varieties.   Suppose $X$ is a smooth algebraic variety over a field $k$ containing the rational numbers.  Then Bloch's theorem \cite{BlochK2Cycles74}, extended by Quillen \cite{QuillenHigherKTheoryI72}, expresses the Chow groups of $X$ as Zariski sheaf cohomology groups of the Quillen $K$-theory sheaves\footnotemark\footnotetext{These are the sheaves associated to the presheaves $U\mapsto K_p(U)$ for open $U\subset X$.  I use $p$ here as a generic superscript; $p$ is usually $n+1$ in the context of the main theorem.}  $\ms{K}_p$ on $X$:
\begin{equation}\label{blochstheorem}
\tn{Ch}^p(X)=H_{\tn{\fsz{Zar}}}^p(X,\ms{K}_p).
\end{equation}
The general intractability of the Chow groups $\tn{Ch}_X^p$ for $p\ge 2$ makes the {\it linearization} of equation \hyperref[blochstheorem]{\ref{blochstheorem}} a problem of obvious interest, somewhat in the same spirit as the linearization of Lie groups via much simpler Lie algebras.\footnotemark\footnotetext{In view of Bloch's theorem \hyperref[blochstheorem]{\ref{blochstheorem}}, this is much more than an analogy.  In particular, the relationship between algebraic $K$-theory and cyclic homology shares many nearly-identical structural features with the relationship between Lie groups and Lie algebras.  See Loday \cite{LodayCyclicHomology98}, Chapters 10 and 11 for details.}  Following this reasoning, and skipping some details, leads to the expression
\begin{equation}\label{linearblochstheorem}
T\tn{Ch}^p(X)=H_{\tn{\fsz{Zar}}}^p(X,T\ms{K}_p,),
\end{equation}
where $T\tn{Ch}^p(X)$ is the {\it tangent group at the origin} of the Chow group $\tn{Ch}^p(X)$, and where $T\ms{K}_p$ is the {\it tangent sheaf at the origin} of the $K$-theory sheaf $\ms{K}_p$.  In this context, $T\ms{K}_p$ is the relative sheaf defined via the simplest nontrivial split nilpotent extension of the structure sheaf $\ms{O}_X$ of $X$, given by tensoring $\ms{O}_X$ with the ring of dual numbers $k[\ee]/\ee^2$ over $k$.   At a given point $x\in X$, this involves extending the local ring $S=\ms{O}_{X,x}$ to the ring $R=S\otimes_kk[\ee]/\ee^2=S[\ee]/\ee^2$, with nilpotent extension ideal $I=(\ee)$.   The terminology {\it at the origin} may be understood by noting that elements of the relative $K$-group $K_p(R,I)$ may be viewed as ``infinitesimal deformations" of the identity element in $K_p(S)$, since the canonical map $K_p(R)\rightarrow K_p(S)$ sends every element of the subgroup $K_p(R,I)\subset K_p(R)$ to the origin in $K_p(S)$.   These considerations bring the study of {\it relative} $K$-theory, and hence of Goodwillie-type theorems, squarely into the picture.   

Green and Griffiths focus on the case of $\tn{Ch}^2(X)$, where $X$ is a smooth algebraic surface over a field $k$ containing the rational numbers.   Historically, this case has provided some of the most important and surprising results in the theory of Chow groups.   The $K$-theory sheaf involved in this context is $\ms{K}_2$, and one may substitute the corresponding {\it Milnor} sheaf $\ms{K}_{2} ^{\tn{\fsz{M}}}$, since the functor $K_{2} ^{\tn{\fsz{M}}}$, defined using the na\"{i}ve tensor product definition \hyperref[defiMilnorK]{\ref{defiMilnorK}}, coincides with $K_2$ on the local rings of $X$.  The object of interest is then 
\begin{equation}\label{linearblochstheorem}
T\tn{Ch}^2(X)=H_{\tn{\fsz{Zar}}}^2(X,T\ms{K}_{2} ^{\tn{\fsz{M}}}).
\end{equation}
Equation \hyperref[linearblochstheorem]{\ref{linearblochstheorem}} represents information about an object viewed as totally intractable; namely $\tn{Ch}^2(X)$, in terms of objects viewed as elementary; namely the relative Milnor $K$-groups $K_{2} ^{\tn{\fsz{M}}}(R,I)$, which may be described in terms of K\"{a}hler differentials.  This expression provides hope for acquiring useful geometric understanding in a somewhat broader context by means of symbolic $K$-theory, avoiding as much as possible an otherwise forbidding morass of modern homotopy-theoretic constructions. 


\subsection{Similar Results involving Relative $K$-Theory and Infinitesimal Geometry}\label{subsectionsimilarresults}

{\bf Van der Kallen: an Early Computation of $K_{2} ^{\tn{\fsz{M}}}(R,I)$.}  The isomorphism $K_{2}(R,I)\cong \Omega_{R,I} ^1/dI$ of Bloch \cite{BlochK2Artinian75}, stated in theorem \hyperref[theorembloch]{\ref{theorembloch}} above under appropriate hypotheses, clearly applies in the case discussed in section \hyperref[subsectionGG]{\ref{subsectionGG}}, in which $R=S[\ee]/\ee^2$ and $I=(\ee)$, $S=R/I$ is assumed to be local, $I$ is nilpotent, and the underlying field $k$ contains $\QQ$.   Under these conditions, it is easy to show that the group $\Omega_{R,I} ^1/dI$ is isomorphic to the group $\Omega_S^1=\Omega_{S/\ZZ}^1$ of absolute K\"{a}hler differentials over $S$.  Indeed, by lemma \hyperref[lemrelativekahlergenerators]{\ref{lemrelativekahlergenerators}}, the relative group $\Omega_{R,I} ^1$ is generated by differentials of the form $\ee adb+c d\ee$ for some $a,b,c\in S$.  Hence, in the quotient $\Omega_{R,I} ^1/dI$, 
\begin{equation}\label{equvanderkallencomputation}d(c\ee)=cd\ee+\ee dc=0,\hspace*{.5cm}\tn{so}\hspace*{.5cm}cd\ee=-\ee dc,\end{equation}
 by the Leibniz rule and exactness.  This shows that $\Omega_{R,I} ^1/dI$ is generated by differentials of the form $\ee adb$, and it is easy to see that all the remaining relations come from $\Omega_{S/\ZZ}^1$.  Identifying $\ee adb$ with $adb$ then gives the isomorphism $\Omega_{R,I}^1\cong\Omega_{S/\ZZ}^1$.\footnotemark\footnotetext{In many instances, it is better to ``carry along the $\ee$," and to think of $\Omega_{R,I}^1$ as $\Omega_{S/\ZZ}^1\otimes_k(\ee)$, since the latter form generalizes in important ways.  For example, $\Omega_{S/\ZZ}^1\otimes_k(\ee)$ is replaced by $\Omega_{S/\ZZ}^1\otimes_k\mfr{m}$ for an appropriate local artinian $k$-algebra with maximal idea $\mfr{m}$ and residue field $k$ in the context of Stienstra's paper \cite{StienstraFormalCompletion83} on the formal completion of $\tn{Ch}^2(X)$}

The specific result 
\begin{equation}\label{equvanderkallenearly}K_{2}\big(S[\ee]/\ee^2,(\ee)\big)\cong \Omega_{S/\ZZ}^1,\end{equation}
 is due to Van der Kallen \cite{VanderKallenEarlyTK271}, predating the more general result of Bloch by several years.  Van der Kallen's result features prominently in the work of Green and Griffiths.\footnotemark\footnotetext{In fact, Green and Griffiths give a messy but elementary symbolic proof of Van der Kallen's result, without using Bloch's theorem; see \cite{GreenGriffithsTangentSpaces05}, appendix 6.3.1, pages 77-81.}

Sheafifying equation \hyperref[equvanderkallenearly]{\ref{equvanderkallenearly}} and substituting it into equation \hyperref[linearblochstheorem]{\ref{linearblochstheorem}} yields the expression
\begin{equation}\label{linearblochstheoremkahler}
T\tn{Ch}^2(X)=H_{\tn{\fsz{Zar}}}^2(X,\varOmega_{X/\ZZ}^1),
\end{equation}
where $\varOmega_{X/\ZZ}^1$ is the sheaf of absolute K\"{a}hler differentials on $X$.   Working primarily from the viewpoint of complex algebraic geometry, Green and Griffiths were struck by the ``mysterious" appearance of absolute differentials in this context, and much of their study \cite{GreenGriffithsTangentSpaces05} is an effort to explain the ``geometric origins" of such differentials.  The right-hand-side of equation \hyperref[linearblochstheoremkahler]{\ref{linearblochstheoremkahler}} is what Green and Griffiths call the ``formal tangent space to $\tn{Ch}^2(X)$."

Green and Griffiths generalize equation \hyperref[linearblochstheoremkahler]{\ref{linearblochstheoremkahler}} to give a definition (\cite{GreenGriffithsTangentSpaces05} equation 8.53, page 145) of the tangent space $T\tn{Ch}^p(X)$ of the $p$th Chow group of a $p$-dimensional smooth projective variety:
\begin{equation}\label{linearblochstheoremkahlerhigher}
T\tn{Ch}^p(X)=H_{\tn{\fsz{Zar}}}^p(X, \varOmega_{X/\ZZ}^{p-1}).
\end{equation}
Equation \hyperref[linearblochstheoremkahlerhigher]{\ref{linearblochstheoremkahlerhigher}} is a linearization of the corresponding case of Bloch's theorem in equation \hyperref[blochstheorem]{\ref{blochstheorem}} above.  From the viewpoint of the present paper, the sheaf $\varOmega_{X/\ZZ}^{p-1}$ in equation \hyperref[linearblochstheoremkahlerhigher]{\ref{linearblochstheoremkahlerhigher}} may be derived by sheafifying a special case of the main theorem in equation \hyperref[equmaintheorem]{\ref{equmaintheorem}}, given by setting $R=S[\ee]/\ee^2$, $I=(\ee)$, and $n=p-1$, where $S$ is taken to be the local ring at a point on $X$:
\begin{equation}\label{equspecialcasedual}K_{p} ^{\tn{\fsz{M}}}\big(S[\ee]/\ee^2, (\ee)\big)\cong\Omega_{S[\ee]/\ee^2, (\ee)} ^{p-1}/d\Omega_{S[\ee]/\ee^2, (\ee)} ^{p-2}\cong \Omega_{S/\ZZ} ^{p-1}.\end{equation}
The second isomorphism in equation \hyperref[equspecialcasedual]{\ref{equspecialcasedual}} follows easily from the Leibniz rule and exactness, as in equation \hyperref[equvanderkallencomputation]{\ref{equvanderkallencomputation}} in the case $n=1$.  In view of equation \hyperref[equspecialcasedual]{\ref{equspecialcasedual}}, the group $\Omega_{S/\ZZ} ^{p-1}$ may be identified as the tangent group at the origin of the Milnor $K$-group $K_{p} ^{\tn{\fsz{M}}}(S)$.  The problem of finding meaningful generalizations of the expression for $T\tn{Ch}^{p}(X)$ in equation \hyperref[linearblochstheoremkahlerhigher]{\ref{linearblochstheoremkahlerhigher}} in cases in which the extension ideal $I$ is more complicated than $(\ee)$ is one of the principal motivations for this paper.   I return to this point in section \hyperref[subsectiontangentfunctors]{\ref{subsectiontangentfunctors}} below. 


{\bf Stienstra: the Formal Completion of $\tn{Ch}^2(X)$.}  In the special case of algebraic surfaces, such a generalization was carried out twenty years ago by Jan Stienstra in his study \cite{StienstraFormalCompletion83} of the {\it formal completion at the origin}  $\widehat{\tn{Ch}}_X^2(A,\mfr{m})$ of the Chow group  $\tn{Ch}^2(X)$ of a smooth projective surface defined over a field $k$ containing the rational numbers.   As discussed above, extension of a $k$-algebra $S$ to the ring $S[\ee]/(\ee^2)$ of dual numbers over $S$ is the simplest nontrivial type of split nilpotent extension, and Bloch's theorem \hyperref[theorembloch]{\ref{theorembloch}} immediately gives much more.  Exploring this thread, Stienstra identifies $\widehat{\tn{Ch}}_X^2(A,\mfr{m})$ as the Zariski sheaf cohomology group 
\begin{equation}\label{equstienstra}\widehat{\tn{Ch}}_X^2(A,\mfr{m})=H_{\tn{\fsz{Zar}}}^2\Bigg(X,\frac{\varOmega_{X\otimes_k A,X\otimes_k\mfr{m}}^1}{d(\ms{O}_X\otimes_k\mfr{m})}\Bigg),\end{equation}
where $A$ is a local artinian $k$-algebra with maximal ideal $\mfr{m}$ and residue field $k$, and where $\widehat{\tn{Ch}}_X^2$ is viewed as a functor from the category of local artinian $k$-algebras with residue field $k$ to the category of abelian groups.\footnotemark\footnotetext{See Stienstra \cite{StienstraFormalCompletion83} page 366. Stienstra writes, {\it ``We may forget about $K$-theory.  Our problem has become analyzing $H^n\big(X,\varOmega_{X\otimes A,X\otimes m}/d(\ms{O}_X\otimes_k\mfr{m})\big)$, as a functor of $(A,\mfr{m})$."}  Much of Stienstra's paper consists of expressing these cohomology groups in terms of the simpler objects $H^n(X,\ms{O}_X)$, $\Omega_{A,\mfr{m}}^1$, and $H^n(X,\varOmega_{X/\ZZ}^1)$.  Similar analysis of the cohomology groups in equation \hyperref[equgeneralizedtangent]{\ref{equgeneralizedtangent}} below is a problem of obvious interest.}  This formidable-looking expression is merely a sheafification of Bloch's isomorphism $K_{2}(R,I)\cong \Omega_{R,I} ^1/dI$ in theorem \hyperref[theorembloch]{\ref{theorembloch}}, substituted into Bloch's expression for the Chow groups $\tn{Ch}^p(X)=H_{\tn{\fsz{Zar}}}^p(\ms{K}_p, X)$ in equation \hyperref[blochstheorem]{\ref{blochstheorem}}.   For example, if $A$ is the ring of dual numbers $k[\ee]/\ee^2$ over $k$, and $\mfr{m}$ is the ideal $(\ee)$, then one recovers the case studied by Green and Griffiths:
\[\widehat{\tn{Ch}}_X^2\big(k[\ee]/\ee^2, (\ee)\big)=T\tn{Ch}^2(X)=H_{\tn{\fsz{Zar}}}^2(X,\varOmega_{X/\ZZ}^1).\]


{\bf Hesselholt: Relative $K$-Theory of Truncated Polynomial Algebras.}  More recently, Lars Hesselholt has done very substantial work on the relative $K$-theory of rings with respect to nilpotent ideals and Goodwillie-type theorems.\footnotemark\footnotetext{In a formal sense, this story may be considered complete: as Hesselholt points out (\cite{HesselholtTruncated05}, page 72) {\it ``If the ideal... ... is nilpotent, the relative $K$-theory can be expressed completely in terms of the cyclic homology of Connes and the topological cyclic homology of B\"{o}kstedt-Hsiang-Madsen."}  However, one is still concerned with unwinding the latter theories in cases of particular interest.  This is the object of Hesselholt's paper \cite{HesselholtTruncated05}, and also, in a smaller way, of mine.}  Here, I cite only one of his many results that is of particular relevance to the subject of this paper.   In his paper \cite{HesselholtTruncated05}, Hesselholt computes the relative $K$-theory (not just Milnor $K$-theory) of truncated polynomial algebras; i.e., polynomial algebras of the form $S[\ee]/\ee^N$ for some integer $N\ge1$, where $S$ is commutative regular noetherian algebra over a field.  The case $N=1$ returns $S$, and the case $N=2$ gives the now-familiar extension of $S$ by the dual numbers.  Hesselholt's result may be expressed as follows:
\begin{equation}\label{equhesselholt}K_{n+1}\big(S[\ee]/\ee^N,(\ee)\big)\cong\bigoplus_{m\ge0}\big(\Omega_{S/\ZZ} ^{n-2m}\big)^{N-1}.\end{equation}
The case $N=2$ gives the expression
\[K_{n+1}\big(S[\ee]/\ee^2,(\ee)\big)\cong\Omega_{S/\ZZ} ^{n}\oplus \Omega_{S/\ZZ} ^{n-2}\oplus \Omega_{S/\ZZ} ^{n-4}\oplus...,\]
where the first summand on the right-hand side is immediately recognizable as the tangent group at the origin of Milnor $K$-theory, identified in equation \hyperref[equspecialcasedual]{\ref{equspecialcasedual}}.  The remaining summands may be viewed, roughly speaking, as representing ``tangents to the non-symbolic part of $K$-theory."  


\subsection{Generalized Tangent Functors}\label{subsectiontangentfunctors}

There exist a number of obvious ways in which one may attempt to generalize Green and Griffiths' study of the tangent group at the origin $T\tn{Ch}^2(X)$ of the Chow group $\tn{Ch}^2(X)$ of a smooth algebraic surface:
\begin{enumerate}
\item\label{higherdim} Higher-dimensional varieties may be considered, as in equation \hyperref[linearblochstheoremkahlerhigher]{\ref{linearblochstheoremkahlerhigher}}.  As Green and Griffiths point out in \cite{GreenGriffithsTangentSpaces05}, much of their work concerning $\tn{Ch}^2(X)$ applies immediately to $\tn{Ch}^n(X)$ for an $n$-dimensional variety. 
\item\label{othercodim} Codimensions different than the dimension of the variety may be studied; for example, one may examine $T\tn{Ch}^2(X)$ for a $3$-fold. 
\item\label{otherinfinitesimal} Infinitesimal information more complicated than the dual numbers may be added to the picture, as in Stienstra's paper \cite{StienstraFormalCompletion83}.  In other words, one may choose to study functors such as the formal completion $\widehat{\tn{Ch}}_X^2$, rather than merely the tangent space.\footnotemark\footnotetext{This may be viewed as an exalted analogue of studying Taylor polynomials rather than merely tangent lines.}
\item\label{otherK} More sophisticated $K$-theory may be employed, as suggested by Hesselholt's theorem, which shows that ``there is more to relative $K$-theory than relative Milnor $K$-theory," even in the simplest cases.   For deep structural reasons, the nonconnective $K$-theory of Bass and Thomason gives good formal results, but Quillen $K$-theory is inadequate. 
\item\label{finitechar} The case of positive characteristic may be considered.  
\item\label{nonsmooth} Smooth algebraic varieties may be exchanged for a more general category of schemes. 
\item\label{higherchow} Analogous objects such as higher Chow groups may be examined. 
\end{enumerate}

The main theorem \hyperref[equmaintheorem]{\ref{equmaintheorem}} in this paper contributes to items \hyperref[higherdim]{1}, \hyperref[othercodim]{2}, \hyperref[otherinfinitesimal]{3}, \hyperref[finitechar]{5}, and \hyperref[nonsmooth]{6} above.  It contributes to item \hyperref[higherdim]{1} because it applies to $K_{p} ^{\tn{\fsz{M}}}(R,I)$ for all $p$.   It contributes to item \hyperref[othercodim]{2} because Bloch's theorem \hyperref[blochstheorem]{\ref{blochstheorem}} for a fixed $p$, applies to varieties of all dimensions.  It contributes to item \hyperref[otherinfinitesimal]{3} because it applies to a broad class of split nilpotent extensions, not merely extensions by the dual numbers.  It contributes to item \hyperref[finitechar]{5} because many rings of positive characteristic are $5$-fold stable, as noted in example \hyperref[examplesemilocalstable]{\ref{examplesemilocalstable}}.  Finally, it contributes to item \hyperref[nonsmooth]{6} because the right-hand side of Bloch's theorem \hyperref[blochstheorem]{\ref{blochstheorem}} provides one way of generalizing the Chow functors to apply to more general schemes, since the sheaves $\ms{K}_p$ are defined under very general conditions.  

The main theorem \hyperref[equmaintheorem]{\ref{equmaintheorem}}  permits interpretation of a particular class of functors on the category of smooth algebraic varieties over a field containing the rational numbers, or another appropriate category of schemes, as {\it generalized tangent functors.} These functors are given by sheafifying the isomorphism
\[K_{n+1} ^{\tn{\fsz{M}}}(R,I)\longrightarrow\frac{\Omega_{R,I} ^n}{d\Omega_{R,I} ^{n-1}},\]
of equation \hyperref[equmaintheorem]{\ref{equmaintheorem}}, and taking Zariski sheaf cohomology.  In particular, Stienstra's formal completion functor \hyperref[equstienstra]{\ref{equstienstra}} generalizes in the obvious way: 
\begin{equation}\label{equgeneralizedtangent}\widehat{\tn{Ch}}_X^n(A,\mfr{m})=H_{\tn{\fsz{Zar}}}^n\Bigg(X,\frac{\varOmega_{X\otimes_k A,X\otimes_k\mfr{m}}^n}{d\big(\varOmega_{X\otimes_k A,X\otimes_k\mfr{m}}^{n-1}\big)}\Bigg).\end{equation}
While these functors considerably broaden the picture examined by Green and Griffiths, they are almost certainly not the ``best" tangent functors available, either in the sense of information-theoretic completeness or in the sense of good formal behavior.  Their advantage lies in the relative tractability of the groups $\Omega_{R,I} ^n/d\Omega_{R,I} ^{n-1}$ compared to higher $K$-groups.   However, the ``best" generalized tangent functors can likely only be accessed by exiting the world of symbolic $K$-theory.  


\subsection*{Acknowledgements}\label{subsectionacknowledgements}

I am grateful to Wilberd Van der Kallen and Lars Hesselholt for their kind answers to several inquiries.  I would also like to thank J. W. Hoffman for making me aware of this topic.


\newpage

\end{document}